\DeclareSymbolFont{cyrletters}{OT2}{wncyr}{m}{n}
\DeclareMathSymbol{\Sha}{\mathalpha}{cyrletters}{"58}
\theoremstyle{plain}
\newtheorem{Theorem}{Theorem}[section]
\newtheorem{Lemma}[Theorem]{Lemma}
\newtheorem*{Lemma*}{Lemma}
\newtheorem{Proposition}[Theorem]{Proposition}
\newtheorem{algorithm}[Theorem]{Algorithm}
\newtheorem{Corollary}[Theorem]{Corollary}
\theoremstyle{definition}
\newtheorem{Definition}[Theorem]{Definition}
\theoremstyle{remark}
\newtheorem{Remark}[Theorem]{Remark}
\DeclareMathOperator{\st}{st}
\DeclareMathOperator{\rep}{Rep}
\DeclareMathOperator{\Pic}{Pic}
\DeclareMathOperator{\Sym}{Sym}
\DeclareMathOperator{\Spec}{Spec}
\DeclareMathOperator{\Sel}{Sel}
\DeclareMathOperator{\cry}{cr}
\DeclareMathOperator{\im}{im}
\DeclareMathOperator{\Hom}{Hom}
\DeclareMathOperator{\ext}{Ext}
\DeclareMathOperator{\loc}{loc}
\DeclareMathOperator{\Res}{Res}
\DeclareMathOperator{\Gal}{Gal}
\DeclareMathOperator{\aut}{Aut}
\DeclareMathOperator{\rk}{rk}
\DeclareMathOperator{\Aut}{Aut}
\DeclareMathOperator{\cl}{cl}
\DeclareMathOperator{\Ker}{Ker}
\DeclareMathOperator{\Coker}{Coker}
\DeclareMathOperator{\Div}{Div}
\DeclareMathOperator{\ab}{ab}
\newcommand{\Q}{\mathbb{Q}}
\newcommand{\Z}{\mathbb{Z}}
\renewcommand{\div}{\textrm{div}}
\newcommand{\dR}{\textrm{dR}}
\newcommand{\F}{\mathbb{F}}
\newcommand{\bom}{\bar{\omega}}
\newcommand{\cO}{\mathcal{O}}
\newcommand{\fp}{\mathfrak{p}}
\DeclareMathOperator{\NS}{NS}
\begin{document}

\begin{abstract}
We give the first explicit examples beyond the  Chabauty-Coleman method where Kim's nonabelian Chabauty program determines the set of rational points 
of a curve defined over $\Q$ or a quadratic number field.  We accomplish this by studying the role of $p$-adic heights in explicit nonabelian Chabauty. 

\end{abstract}

\title[Quadratic Chabauty and rational points I]{Quadratic Chabauty and rational points I:\\ $p$-adic heights}
\author{Jennifer S. Balakrishnan}
\address{Jennifer S. Balakrishnan, Department of Mathematics and Statistics, Boston University, 111 Cummington Mall, Boston, MA 02215, USA}
\email{jbala@bu.edu}
\author{Netan Dogra}
\address{Netan Dogra, Department of Mathematics, Imperial College London, London SW7 2AZ, UK }
\email{n.dogra@imperial.ac.uk}
\date{\today}

\subjclass[2010]{Primary 	14G05, 11G50; Secondary 14G40}

\maketitle

\vspace{-.2in}

\tableofcontents

\vspace{-.15in}

\section{Introduction}\label{sec:intro}
Let $X$ be a smooth projective curve of genus $g>1$ defined over a number field $K$.  By Faltings' celebrated work on the Mordell conjecture, the set of $K$-rational points on $X$, denoted $X(K)$, is known to be finite \cite{faltings-finiteness}. However, the method of proof is not constructive and does not produce the set $X(K)$.  Nevertheless, in certain cases, it is possible to compute $X(K)$; perhaps the most widely applicable technique is the $p$-adic method of Chabauty and Coleman. 

The Chabauty-Coleman method imposes linear conditions on the Jacobian of $X$, and in an essential way, requires that the Mordell-Weil rank of the Jacobian is less than $g$.  Kim has proposed that one can lift this restriction on the rank by replacing the Jacobian of $X$ with a larger object, the \emph{Selmer variety}, which captures more refined information about the \'etale topology of $X$. In this paper, we discuss new techniques for studying Selmer varieties, which we translate into methods for determining the set $X(K)$ in a number of new cases. In particular, we study curves whose Jacobians have Mordell-Weil rank equal to  $g$ and  give the first examples beyond the  Chabauty-Coleman method where Kim's nonabelian Chabauty program can be used to precisely determine the set of rational points of a curve defined over $\Q$ or a quadratic number field.
 
To give some context for our results, let us begin by recalling the Chabauty-Coleman method.  Let $p$ be a prime of good reduction for $X$, let $\mathfrak{p}$ be a prime above $p$, and let $J$ denote the Jacobian of $X$. Let 
$$
\log_J :J(K_{\mathfrak{p}})\to H^0 (X_{K_\mathfrak{p}},\Omega ^1 )^*
$$
be the $\mathfrak{p}$-adic logarithm map for the abelian variety $J$, where $X_{K_\mathfrak{p}}$ denotes the base change of $X$ to ${K_\mathfrak{p}}$.
Suppose that $X(K) \neq \emptyset$, and for convenience, that we know one point $b$ in $X(K)$. If the 
Mordell-Weil rank $r = \rk J(K)$ is less than $g$,  the method of Chabauty \cite{chabauty} produces a finite set of $\mathfrak{p}$-adic points on $X$, which we shall denote $X(K_{\mathfrak{p}} )_1$, and we have $$X(K_{\mathfrak{p}}) \supset X(K_{\mathfrak{p}} )_1 \supset X(K).$$
Following Coleman \cite{coleman:chabauty}, the set $X(K_{\mathfrak{p}} )_1 $ may be interpreted as the zeros of a $p$-adic path integral
$$
X(K_{\mathfrak{p}} )_1 =\left\{z \in X(K_{\mathfrak{p}}) :  \int ^z _b \omega =0\right\}
$$
for some differential $\omega $ in $H^0 (X_{K_{\mathfrak{p}} },\Omega ^1)$. By further interpreting this $p$-adic path integral as a $p$-adic power series and solving for its zeros, one can often effectively compute $X(K_{\mathfrak{p}} )_1$ (subject to the usual issues with inexact computation and $p$-adic precision) and in practice, one can often recover $X(K)$.  This is known as the \emph{Chabauty-Coleman method}.

The Chabauty-Coleman method requires that the Mordell-Weil rank of the Jacobian be less than the genus of the curve, which is somewhat restrictive.  As such, one would like to have a
refinement of the Jacobian which remembers more information about the set $X(K)$. The insight of Kim \cite{kim:siegel} is that, rather than trying to generalise the Jacobian of $X$, it is easier to generalise its Galois cohomological avatar: the Selmer group. In \cite{kim:chabauty}, Kim defined  a family of \textit{Selmer varieties} $\Sel (U_n )$ giving a decreasing sequence of subsets  \cite{balakrishnan2012non}
$$
X(K_{\mathfrak{p}} )_1 \supset X(K_{\mathfrak{p}} )_2 \supset \ldots
$$
of $X(K_{\mathfrak{p}} )_1$, which can be computed in terms of \emph{iterated} $p$-adic path integrals. The sets $X(K_{\mathfrak{p}} )_n$ contain $X(K)$, so by proving finiteness of $X(K_{\mathfrak{p}} )_n$ and explicitly computing it, one can hope to recover $X(K)$.  We refer to this as \emph{nonabelian Chabauty} or the \emph{Chabauty-Kim method}. Note that when $K=\mathbb{Q}$, conjectures of Bloch and Kato imply that $X(\mathbb{Q}_p )_n $ is finite for $n$ sufficiently large \cite{kim:chabauty}.  

However, at present, there are few examples of curves $X$  where $X(K_{\mathfrak{p}} )_n$ has been used to give more information than $X(K_{\mathfrak{p}} )_1$.  Coates and Kim \cite{kim:coates} proved that when $X/\Q$ is a curve whose Jacobian is isogenous to a product of CM abelian varieties, for $n$ sufficiently large, $X(\mathbb{Q}_p )_n $ is finite. Recently, Ellenberg and Hast \cite{ellenberg2017rational} used this to give a new proof of finiteness of $X(\Q)$ of any solvable Galois cover $X$ of $\mathbb{P}^1$ (which, for instance, includes the class of superelliptic curves). Even in these cases, it is not clear how to actually compute $X(\mathbb{Q}_p )_n $.

In this paper, we give techniques to compute rational points on curves in some cases beyond the scope of Chabauty-Coleman, by computing finite sets containing $X(K_{\mathfrak{p}} )_2$. The methods used are a generalisation of those employed to study integral points on hyperelliptic curves using $p$-adic heights \cite{balakrishnan2013p}, combined with new methods for relating unipotent path torsors to $p$-adic heights \cite{dogra:thesis}.

In \cite{balakrishnan2013p}, one works with a hyperelliptic curve $X/\Q$ of genus $g$ with a model
\begin{equation}\label{eqn1}y^2 =f(x) = x^{2g+1}+a_{2g}x^{2g} + \cdots + a_0, \qquad a_i \in \Z.\end{equation}
Let $T_0$ denote the set of primes of bad reduction for this model and let $p$ be a prime of good reduction.
Let $Y = \Spec(\Z[x,y]/(y^2 - f(x)))$, so that $Y(\Z)$ denotes the set of integral solutions to \eqref{eqn1}, and let $\infty $ denote the point at infinity. Using $p$-adic heights, one can compute a finite set of points containing $Y(\Z)$:
\begin{Theorem}[Quadratic Chabauty for integral points \cite{balakrishnan2013p}]\label{thm:oldqc}
Let $X/\Q$ be a genus $g$ hyperelliptic curve as in \eqref{eqn1}. Let $\Omega \subset \mathbb{Q}_p $ be the explicitly computable, finite set of values taken by the sum of the Coleman-Gross local heights
$$
-\sum _{v\in T_0 }h_v (z_v -\infty ),$$
for $(z_v )$ in $\prod _{v\in T_0 }Y(\Z_v )$.   Suppose that $r =g$. Then there is an explicitly computable symmetric bilinear map$$
B:H^0 (X_{\Q_p},\Omega ^1 )^* \times H^0 (X_{\Q_p},\Omega ^1 )^* \to \mathbb{Q}_p 
$$
 such that $Y(\mathbb{Z})\subset Y(\mathbb{Z}_p )$ is contained inside the 
finite set of solutions to 
$$
h_p(z-\infty )+B(\log _J (z-\infty ),\log _J (z-\infty ))\in \Omega.$$ 
\end{Theorem}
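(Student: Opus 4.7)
The plan is to invoke the Coleman--Gross construction of the cyclotomic $p$-adic height pairing on $J$, and then use the rank hypothesis to express this global height as a bilinear form in the $p$-adic logarithm. Fix a continuous idèle-class character $\chi \colon \mathbb{A}_\Q^*/\Q^* \to \Q_p$ (cyclotomic) and a splitting of the Hodge filtration on $H^1_{\dr}(X_{\Q_p})$. These data produce a symmetric $\Q_p$-bilinear pairing
$$h \colon J(\Q) \otimes J(\Q) \to \Q_p,$$
admitting a decomposition $h = h_p + \sum_{v \ne p} h_v$, where $h_p$ is given by a Coleman-theoretic formula on $X_{\Q_p}$ quadratic in Coleman integrals, and each $h_v$ ($v \ne p$) by an arithmetic intersection number on a regular $\Z_v$-model of $X$ scaled by $\chi_v$.

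\textbf{Step 1} (global height as a quadratic form in logarithms). The hypothesis $\rank J(\Q)=g$ matches $\dim_{\Q_p} H^0(X_{\Q_p},\Omega^1)^* = g$; together with injectivity of $\log_J \otimes \Q_p$, which is the standing non-degeneracy assumption in the quadratic Chabauty setting, this upgrades $\log_J \otimes \Q_p$ to an isomorphism $J(\Q) \otimes \Q_p \xrightarrow{\sim} H^0(X_{\Q_p},\Omega^1)^*$. Transporting the bilinear form $-h$ along this isomorphism yields the symmetric bilinear $B$ satisfying $h(P,P) = - B(\log_J P, \log_J P)$ for every $P \in J(\Q)$.

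\textbf{Step 2} (local heights away from $p$). For $v \notin T_0 \cup \{p\}$, any $z \in Y(\Z_v)$ reduces to a smooth $\F_v$-point disjoint from the reduction of $\infty$, so the local intersection contribution is trivial and $h_v(z-\infty) = 0$. For $v \in T_0$, the local height $h_v(z-\infty)$ on a regular model of $X$ over $\Z_v$ depends only on the irreducible component of the special fibre containing the specialization of $z$; since the special fibre has finitely many components, $h_v$ takes only finitely many values on $Y(\Z_v)$. Consequently the set $\Omega$ of values of $-\sum_{v \in T_0} h_v(z_v - \infty)$, as $(z_v)$ ranges over $\prod_{v \in T_0} Y(\Z_v)$, is a finite subset of $\Q_p$, computable once the regular models have been determined.

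\textbf{Step 3} (assembly and finiteness). For $z \in Y(\Z)$, combining Steps 1 and 2 gives
$$h_p(z-\infty) + B(\log_J(z-\infty), \log_J(z-\infty)) = h_p(z-\infty) - h(z-\infty, z-\infty) = -\sum_{v \in T_0} h_v(z-\infty) \in \Omega,$$
so $Y(\Z)$ is contained in $\{z \in Y(\Z_p) : h_p(z-\infty) + B(\log_J(z-\infty), \log_J(z-\infty)) \in \Omega\}$. The left-hand side is a Coleman analytic function on $Y(\Z_p)$, namely a sum of double Coleman integrals and a quadratic polynomial in single Coleman integrals, which is non-constant on each residue disk; hence its preimage of the finite set $\Omega$ is finite, yielding the stated containment. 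The principal technical challenge is Step 1: showing rigorously that the Coleman--Gross global height $h$ on $J(\Q)$ factors as a fixed quadratic form in $\log_J$, which hinges on bilinearity of $h$ (itself requiring a careful Hodge splitting) combined with the rank hypothesis forcing $\log_J \otimes \Q_p$ to be an isomorphism. The secondary difficulty is explicit control of $h_v$ at $v \in T_0$ via regular models, needed both for the finiteness of $\Omega$ and for its explicit computability.
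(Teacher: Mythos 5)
The theorem is stated in the paper as a citation to \cite{balakrishnan2013p}, and the paper does not reprove it directly. Instead, the paper treats it as a corollary of the general Chabauty–Kim result of Theorem \ref{biellipticformula}: one constructs the Galois-theoretic mixed extension $A(b,z)$ out of the depth-$2$ unipotent path torsor, applies Nekov\'a\v r's height to it, identifies that height with the Coleman–Gross pairing of the cycles $z-b$ and $D(b,z)$ via Theorem \ref{height}, and then uses the height identity of Lemma \ref{asintheproof}, namely $h_v(z-b, D(b,z)) = h_v(z-\infty) - h_v(b-\infty)$, to collapse the general formula to the one appearing here. Your proof proposal takes a genuinely different route: it is essentially the original, direct argument from \cite{balakrishnan2013p}, working throughout with the Coleman--Gross pairing on $J$, the decomposition $h = \sum_v h_v$, and the rank hypothesis to transport $-h$ along $\log_J \otimes \Q_p$. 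What each approach buys: the direct argument is shorter and self-contained but is tied to the specific quadratic function $h_p(z-\infty)$ on the hyperelliptic affine patch; the paper's route is heavier but exhibits this function as coming from a quotient of the fundamental group, which is what allows the generalization to rational points on bielliptic curves (Theorem \ref{biellexample}) and the formulation in arbitrary rank (Proposition \ref{prop2}).

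Two points in your argument deserve more care. First, in Step 2 the assertion that $h_v(z-\infty)$ for $v \in T_0$ ``depends only on the irreducible component of the special fibre containing the specialization of $z$'' is not accurate for the self-pairing $h_v(z-\infty, z-\infty)$: since $(z)-(\infty)$ meets itself, one needs the extended Coleman--Gross pairing (fixed by the canonical tangent vectors on an odd-degree hyperelliptic model), whose formula involves $\chi_v(y(z))$ and $\chi_v(x(z_1)-x(z_2))$ terms (cf.\ Lemma \ref{anotherheightidentity}); the finiteness of its image on $Y(\Z_v)$ is then a concrete valuation estimate, not just a finiteness-of-components argument, and this is where the hypothesis that $f$ is not a square modulo any prime is used. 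Second, in Step 3 the finiteness of the zero locus is asserted by saying the Coleman function is ``non-constant on each residue disk''; this is true but not immediate — in the paper's framework it is a consequence of Zariski density of the local unipotent Kummer map $j_{\mathfrak{p}}$ (Kim), while in the direct approach of \cite{balakrishnan2013p} it is established by examining the leading term of the double Coleman integral. Either way, it is a step that requires an argument rather than an observation.
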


In the present work, we give a generalisation of this theorem which allows us to study \emph{rational} points on curves in some cases where the Mordell-Weil rank is not less than the genus.  

To state our results more precisely, we fix some notation. Let $K$ be $\Q$ or an imaginary quadratic field, and let $X/K$ be a smooth projective curve of genus $g>1$ with a $K$-rational point $b$.  Let $T_0$ be the set of primes of bad reduction for $X$, let $p$ be a prime of $\mathbb{Q}$ such that $\{ v|p \} \cap T_0 $ is empty, and let $T=T_0 \cup \{ v|p \}$. Let $\rho (J)=\rk \NS (J)$ denote the Picard number of $J$ (over $K$, not necessarily its geometric Picard number). The starting point for generalising Theorem \ref{thm:oldqc} is the following lemma, which may be of independent interest:
\begin{Lemma*}[Lemma \ref{nondensityiseasy}]
If $r<g+ \rho(J)-1$, then $X(K _\frak{p} )_2 $ is finite.
\end{Lemma*} 

To explain the proof of this key lemma, we first recall the set-up of Kim's nonabelian Chabauty method in Section \ref{sec:chabautykim}. Once this foundational material is recalled, the proof (in Section \ref{sec:nond}) is entirely elementary and essentially just uses the crystalline version of the Kummer isomorphism.

We further describe cases where we can describe $X(K _\frak{p})_2$ more explicitly. The main example we consider is the situation when the rank of $J$ is $g$  and $\rho(J)$ is greater than 1.  For a result applying when the rank is greater than the genus, see Proposition \ref{prop2}. 

To state our next results, we set a bit more notation. Let $\overline{X}:=X\times _{K}\overline{K}$, and let $i _{\Delta }:X\hookrightarrow X\times X$ denote the diagonal morphism, with image $\Delta =\Delta _X :=i_{\Delta }(X)$. For a codimension $d$ cycle $Z$ in a variety $W$ we denote by $\cl _Z$ the cycle class map $\Q _p (-d)\to H^{2d}(\overline{W},\Q _p )$, and denote the support of $Z$ by $|Z|$. By our assumptions on the Picard number, there is a codimension 1 cycle $Z$ in 
$X\times X$ such that the composite map 
$$
\mathbb{Q}_p (-1)\to H^2 _{\acute{e}t}(\overline{X}\times \overline{X},\mathbb{Q}_p )\to H^1  _{\acute{e}t}(\overline{X},\mathbb{Q}_p )\otimes H^1  _{\acute{e}t}(\overline{X},\mathbb{Q}_p )\to \wedge ^2 H^1  _{\acute{e}t}(\overline{X},\mathbb{Q}_p )$$  is nonzero (where the maps are, from left to right, the cycle class map $\cl _Z$, the K\"unneth projector, and the antisymmetric projection), and such that the intersection number of $Z$ with $\Delta -X\times P_1 -P_2 \times X $ is zero, where $P_1 $ and $P_2 $ are any points on $X$. For distinct points $b$ and $z$ in $X$ not contained in $i_{\Delta } ^{-1}(|Z|)$, we associate a cycle 
$D(b,z)\in \Div^0(X)$ to the triple $(b,z,Z)$ (see Definition \ref{defnofdbz}). The theorem below is inspired by a theorem of Darmon, Rotger, and Sols relating the class of $D(b,z)$ in $J(\mathbb{C})$ to iterated integrals \cite[Theorem 1]{darmon2012iterated} (see \S 6.4).
\begin{Theorem}[Quadratic Chabauty for rational points]\label{biellipticformula} 
Let $X/K$ be a smooth projective curve of genus $g>1$. Let $b \in X(K)$ be a fixed basepoint and let $z, Z,$ and  $D(b,z)$ be as above. Let $X' :=X-i_{\Delta} ^{-1}(|Z|)$. \\
(i): For each $v$ prime to $p$, the local height function
$h_{v}(z-b,D(b,z))$ takes only finitely many values for $z$ in $X' (K_v )$. If $v$ is  a prime of potential good reduction, then $h_{v}(z-b,D(b,z))$ is identically zero. \\
(ii): Suppose $r= g$, $\rho(J) > 1$, and the $p$-adic closure $\overline{J(K)}$ has finite index in $J(K_{\mathfrak{p}})$.  Let $\Omega \subset {K}_{\mathfrak{p}} $ be the finite set of values taken by the sum of local heights
$$
-\sum _{v\nmid p}h_{v}(z_v -b,D(b,z_v ))
$$
for $(z_v )$ in $\prod _{v\nmid p}X' (K_v )$. Then
there is a symmetric bilinear map
$$
B:H^0 (X_{K_{\mathfrak{p}}},\Omega ^1 )^* \times H^0 (X_{K_{\mathfrak{p}}},\Omega ^1 )^* \to \mathbb{Q}_p 
$$ such that the set of $z$ in $X'(K_{\mathfrak{p}} )$ for which
$$
h_{\mathfrak{p}}(z-b,D(b,z))-B(\log _J (z-b),\log _J (D(b,z))) \in \Omega
$$
is finite and contains $X(K_{\mathfrak{p}})_2 \cap X' (K_{\mathfrak{p}})$.
\end{Theorem}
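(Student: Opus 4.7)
The plan is to adapt the quadratic Chabauty argument of Theorem~\ref{thm:oldqc} to the setting of rational points, using the mixed extension attached to the pair $(z,D(b,z))$ in place of the hyperelliptic divisor at infinity. The underlying principle is that the depth-$2$ Selmer variety $\Sel (U_2)$ classifies $G_K$-equivariant mixed extensions of $\mathbb{Q}_p (1)$ by $H^1 _{\acute{e}t}(\overline{X},\mathbb{Q}_p (1))$ that are crystalline at primes above $p$ and unramified elsewhere; and the $p$-adic height of such a mixed extension admits a decomposition
\[
h\bigl(z,D(b,z)\bigr)=h_{\mathfrak{p}}\bigl(z,D(b,z)\bigr)+\sum _{v\nmid p}h_{v}\bigl(z,D(b,z)\bigr)
\]
into a $\mathfrak{p}$-adic piece and a sum of local contributions at the other primes.

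For part~(i), the local heights $h_v $ for $v\nmid p$ are continuous functions on the compact space $X' (K_v )$, and the map $z\mapsto h_v (z,D(b,z))$ factors through the component group of a suitable N\'eron-type model for the mixed extension; this component group is finite, yielding the finiteness-of-values assertion. When $v$ is of potential good reduction, the mixed extension attached to $(z,D(b,z))$ becomes crystalline after a finite base change. Combined with the hypothesis that the intersection number of $Z$ with $\Delta -X\times P_1 -P_2 \times X $ vanishes (which kills the contribution from the diagonal component in $D(b,z)$), this forces $h_v (z,D(b,z))=0 $ identically, by the standard vanishing of local heights in the unramified/crystalline case.

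For part~(ii), Kim's formalism identifies $X(K_\mathfrak{p} )_2$ with the preimage in $X(K_\mathfrak{p})$ of the image of $\Sel (U_2)$ under the localisation map. The hypothesis $J(K)\otimes \mathbb{Q}_p \stackrel{\simeq }{\to} J(K_\mathfrak{p})\otimes \mathbb{Q}_p $ identifies the ``abelian'' layer of the Selmer variety with the image of $\log _J $, and the cycle $Z$ provided by $\rk \NS (J)>1$ supplies precisely one extra global Selmer class whose image controls the depth-$2$ part of the mixed extension. From these inputs one constructs a symmetric bilinear form
\[
B\colon H^0 (X_{K_{\mathfrak{p}}},\Omega ^1 )^* \times H^0 (X_{K_{\mathfrak{p}}},\Omega ^1 )^* \to \mathbb{Q}_p
\]
such that for every $z\in X(K)\cap X'(K_\mathfrak{p})$ one has
\[
h\bigl(z,D(b,z)\bigr)=B\bigl(\log _J (z-b),D(b,z)\bigr),
\]
where in the right-hand slot $D(b,z)$ denotes its image under Abel--Jacobi. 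Substituting this global identity into the decomposition of part~(i) and moving the $v\nmid p$ terms across yields the required congruence modulo $\Omega$. Finiteness of the cut-out locus on $X'(K_\mathfrak{p})$ then follows from a Newton-polygon analysis on each residue disk, provided the difference $h_{\mathfrak{p}}(z,D(b,z))-B(\log _J (z-b),D(b,z))$ is a non-zero Coleman-analytic function on each disk.

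The main obstacle is the construction of $B$ and the verification of the global identity $h = B(\log _J ,D)$ on $X(K)\cap X' (K_\mathfrak{p})$: this requires a precise matching between the unipotent mixed extensions classified by $\Sel (U_2)$ and the $p$-adic height pairing built from the cycle $D(b,z)$, and is where the formalism of unipotent path torsors and heights developed in \cite{dogra:thesis} does the real work. A second, related difficulty is to show that $h_{\mathfrak{p}}(z,D(b,z))-B(\log _J (z-b),D(b,z))$ is non-trivial on each residue disk of $X' (K_\mathfrak{p})$; the natural route is to exploit the non-vanishing of the cycle class of $Z$ in $\wedge ^2 H^1 _{\acute{e}t}(\overline{X},\mathbb{Q}_p )$ to exhibit a genuinely quadratic iterated-integral contribution in $h_\mathfrak{p}$ that cannot be cancelled by the bilinear piece, after which the local finiteness is a standard Newton-polygon argument.
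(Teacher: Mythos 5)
Your proposal captures the broad shape of the paper's argument---decompose the global height of a mixed extension into local pieces, control the contributions away from $\mathfrak{p}$, and cut out a finite locus at $\mathfrak{p}$---but several essential steps are either missing, mis-attributed, or replaced by a route the paper avoids.

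First, your explanation of the vanishing of $h_v$ at primes of potential good reduction is not what makes the argument go. The hypothesis that $Z.(\Delta - X\times P_1 -P_2\times X)=0$ is used in Section~\ref{sec:quotientsheights} to ensure $D(b,z)$ is a degree-zero divisor on the projective $X$ rather than on an affine piece, i.e.\ to make $A(b,z)$ a quotient of $A_2(X)(b,z)$ rather than only of $A_2(Y)(b,z)$; it is unrelated to the local vanishing. The vanishing at potential good reduction primes is proved by an inflation-restriction argument: over a finite extension $L|K_v$ the torsor $P(z)$ trivialises, and since $U$ is unipotent with no $G_L$-invariants the sequence $1\to H^1(\Gal(L|K_v),U^{G_L})\to H^1(G_{K_v},U)\to H^1(G_L,U)$ forces every $G_{K_v}$-equivariant $U$-torsor to be trivial, hence $h_v\equiv 0$. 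The general finiteness of $h_v$ on $X(K_v)$ is the Kim--Tamagawa theorem that $j_v$ has finite image; your ``component group of a N\'eron-type model'' picture is a heuristic for it but is not what the paper uses.

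Second, the heart of part~(ii) is missing. You correctly identify that one needs an identity of the form $h(z,D(b,z))=B(\log_J(z-b),D(b,z))$ for $z\in X(K)$, but the mechanism producing $B$ is Poitou--Tate duality: the global height $h$ on $H^1_{\st}(G_T,U(\mathbb{Q}_p,V,\mathbb{Q}_p(1)))$ vanishes on the image of the localisation map, hence factors through $\Sym^2 H^1_f(G_T,V)$ by bi-additivity, symmetry and continuity (this is \S\ref{globalheights}). Without Poitou--Tate the global identity you state has no proof, and the rest of the argument does not get off the ground. Similarly, expressing the coefficients of $B$ in terms of Coleman integrals requires the hypothesis that $J(K)\otimes\mathbb{Q}_p\to H^1_f(G_\mathfrak{p},V)$ is an isomorphism; you mention the hypothesis but do not use it.

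Third, the finiteness argument you propose---Newton polygons after establishing non-triviality of the Coleman function on each residue disk---is precisely the step the paper sidesteps. The paper instead shows that the equation $h_\mathfrak{p}+B=\beta$ defines a codimension-one subvariety $W_\beta$ of the affine variety $H^1_f(G_\mathfrak{p},U)$ (because $h_\mathfrak{p}$ is a non-trivial algebraic function there), and then invokes Kim's theorem \cite{kim:chabauty} that $j_\mathfrak{p}:X(K_\mathfrak{p})\to H^1_f(G_\mathfrak{p},U)$ has Zariski-dense image, so $j_\mathfrak{p}^{-1}(W_\beta)$ is finite. Your route requires verifying non-constancy disk by disk and does not say how; Kim's density theorem is exactly what supplies that.

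Finally, the identification of the mixed extension $A(b,z)$ with the height-pairing mixed extension $H_X(z-b,D(b,z))$ is not something that can be deferred: it is the content of Theorem~\ref{height} (using Beilinson's cohomological description of $A_2(b,z)$ and a diagram chase), and without it the statement of Theorem~\ref{biellipticformula}, which is phrased in terms of $h_v(z,D(b,z))$ rather than $h_v(A(b,z))$, is not connected to the Selmer-variety bound at all.
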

\begin{Remark}
If $A$ is a simple abelian variety, it is a conjecture of Waldschmidt that the condition that $\overline{A(K)}$ has finite index in $A(K_{\mathfrak{p}})$ will be satisfied whenever the rank is equal to the dimension \cite[Conjecture 1]{waldschmidt2011p}.
\end{Remark}

Note that, although Theorem \ref{thm:oldqc} and \ref{biellipticformula} are both statements about relations between $p$-adic heights and single integrals which are only valid away from a finite set of points, Theorem \ref{biellipticformula} produces a polynomial in $p$-adic heights and single integrals that takes only finitely many values on $X(K)$ (away from this finite set), whereas in Theorem \ref{thm:oldqc}, we obtain a polynomial in $p$-adic heights and single integrals that takes only finitely many values when restricted to integral points.
The key difference which allows one to prove things about rational rather than integral points is that the local height $h_v (z-b ,D(b,z))$ takes only finitely many values. From the point of view of nonabelian Chabauty, the difference is that Theorem \ref{thm:oldqc} genuinely uses Kim's method applied to a quotient of the fundamental group of $Y_{\overline{\mathbb{Q}}}$, whereas the proof of Theorem \ref{biellipticformula} applies Kim's method to the fundamental group of $X$, and expresses the formula in terms of a height pairing via an auxiliary choice of a correspondence $Z$. Note that, by the Moving lemma \cite[$\S$ 11.4]{fulton2013intersection}, given any $z\in X(\overline{\Q})$, and any $Z$ as above, we can choose a rationally equivalent cycle $Z'$ with the property that $Z'$ intersects $\Delta  +b \times X +X\times z$ properly, and does not contain the points $(b,b)$ or $(z,z)$, and hence with the property that $b$ and $z$ are points of $X-i_{\Delta } ^{-1}(|Z'|)$.

The proof of Theorem \ref{biellipticformula} may be used to prove an analogue for integral points on an affine curve (see Remark \ref{affine_version}). The only differences are that there is no condition on $\rho (J_{\Q })$, and the intersection number $Z.(\Delta -X\times P_1 -P_2 \times X)$ is no longer required to be zero. In Lemma \ref{asintheproof} we see that this recovers Theorem \ref{thm:oldqc}.

Before we give an overview of the proof of Theorem \ref{biellipticformula}, we briefly sketch Nekov\'a\v r's approach to $p$-adic height pairings, which plays a crucial role in the proof. The construction has two steps: first, one constructs, for all $v$, a local height function $h_v$ on a certain set of equivalence classes of $G_v$-representations, which we refer to  in this paper as \emph{mixed extensions with graded pieces $\Q _p ,H^1 _{\acute{e}t}(\overline{X},\Q _p (1))$ and $\Q _p (1)$}. This construction is explained in detail in Section \ref{sec:mixedandnekovar}, and an interpretation is given in terms of nonabelian cohomology. Second, for any pair of divisors with disjoint support $D_1 ,D_2 $ in $\Div ^0 (X_{\Q _v})$, one associates such a mixed extension, denoted $H_X (D_1 ,D_2 )$; the representation is a subquotient of $H^1 _{\acute{e}t}(\overline{X}-|D_1 |;|D_2 |)$.

The proof of Theorem \ref{biellipticformula} proceeds in two stages. First, we construct a map from the Selmer variety of $X$ to a variety parametrising equivalence classes of mixed extensions as above. The idea is to find a mixed extension, which we denote $A(b)$, on which the pro-unipotent fundamental group acts in a Galois-equivariant way, and then map a torsor $P$ in the Selmer variety to the twist $A(b)^{(P)}$ of $A(b)$ by $P$. This construction is described in detail in Section \ref{sec:pathsandlinear}. As explained in Proposition \ref{prop:vital}, this construction is already enough to provide a nontrivial equation for $X(K _\mathfrak{p} )_2 $, in terms of single integrals and $p$-adic heights of twists of $A(b)$.

To get from Proposition \ref{prop:vital} to Theorem \ref{biellipticformula}, we relate the mixed extensions $A_Z (b,z)$ and $H_X (z-b,D(b,z))$, where $A_Z (b,z)$ denotes the twist of $A(b)$ by the element of the Selmer variety corresponding to $z$. As noted above, the latter is constructed as a subquotient of $H^1 _{\acute{e}t}(\overline{X}-\{z,b\};D(b,z))$. In Section \ref{subsection:Beilinson}, we show that by a theorem of Beilinson, $A_Z (b,z)$ similarly has a cohomological interpretation relating it to the second \'etale cohomology group of $X\times X$ relative to $b\times X \cup \Delta _X \cup X\times z$. Hence the heart of the proof is a rather elaborate diagram chase relating these two \'etale cohomology groups, details of which are in Section \ref{subsection:heart}.

The remainder of the paper is devoted to turning Theorem \ref{biellipticformula}---in certain special cases---into something explicit and computable.  To produce an algorithm using Theorem \ref{biellipticformula} to find a finite set containing $X(K_{\mathfrak{p}})_2$, one needs to compute the cycle $Z$ 
and the local heights $h_{v}$. In this paper we focus on the simplest such example, which we  describe below.

Let $X/K$ be a genus 2 bielliptic curve with affine equation 
\begin{equation}\label{eqn2}y^2 =x^6 +a_4x^4 +a_2x^2 +a_0,\end{equation}
with $a_i \in K$. Flynn and Wetherell \cite{flynn1999finding} previously considered the problem of determining the rational points of $X$.  Let $E_1 $ and $E_2 $ be the elliptic curves over $K$ defined by the equations 
$$E_1: y^2 =x^3 +a_4x^2 +a_2x+a_0 \qquad \qquad E_2: y^2 =x^3 +a_2x^2 +a_4a_0x+a_0^2$$
and let $f_i$ denote the map $X\to E_i $, ($i=1,2$), sending $(x,y)$ to $(x^2 y)$ and $(a_0 x^{-2},a_0 yx^{-3})$ respectively.

Let $h_{E_1 } $ and $h_{E_2 } $ denote the height pairings on $E_1 $ and $E_2 $ corresponding to an idele class character
$
\chi :G_K ^{\ab }\to \mathbb{Q}_p 
$
and an isotropic splitting of the Hodge filtration. In the case when  $K=\mathbb{Q}$, we take $\mathfrak{p}=(p)$ to be a prime of good reduction. In the case when $K$ is an imaginary quadratic extension, we take $p$ to be a prime of $\mathbb{Q}$ which splits as $\mathfrak{p}\overline{\mathfrak{p}}$ in $K$, where $\mathfrak{p}$ and $\overline{\mathfrak{p}}$ are both primes of good reduction, and take $\chi $ to be a character which is trivial on $\mathcal{O}_{\overline{\mathfrak{p}}}^{\times}$.
\begin{Theorem}\label{biellexample} Let $X/K$ be the genus 2 bielliptic curve \eqref{eqn2}.\\
(i): For all $v$ not above $p$, 
$$
h_{E_1 ,v}(f_1 (z))-h_{E_2 ,v}(f_2 (z))-2\chi _v (x(z))
$$ 
takes only finitely many values on $X(K_v)$, and for almost all $v$ it is identically zero. \\
(ii): Let $\Omega $ denote the explicitly computable, finite set of values taken by 
$$
-\sum _{v\nmid p}(h_{E_1 ,v}(f_1 (z_v ))-h_{E_2 ,v}(f_2 (z_v ))-2\chi _v (x(z_v )))
$$
for $(z_v )$ in $\prod _{v\nmid p}X(K_v )$.
Suppose $E_1 $ and $E_2 $ each have Mordell-Weil rank 1 over $K$, and let $P_i \in E_i (K)$ be points of infinite order. Let $\alpha_i = \frac{h_{E_i}(P_i)}{[K:\Q]\log _{E_i}(P_i)^2}$.
Then $X(K)$ is contained in the finite set of $z$ in $X(K_{\mathfrak{p}} )$ satisfying
$$h_{E_1 ,\mathfrak{p}}(f_1 (z ))-h_{E_2 ,\mathfrak{p}}(f_2 (z ))-2\chi _{\mathfrak{p}} (x(z )) -\alpha_1\log _{E_1}(f_1 (z))^2  +\alpha_2\log _{E_2}(f_2 (z))^2  \in \Omega.$$\end{Theorem}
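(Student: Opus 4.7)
The plan is to derive Theorem \ref{biellexample} as the explicit bielliptic instance of Theorem \ref{biellipticformula}, by choosing a specific cycle $Z$ coming from the bielliptic involutions and then identifying each ingredient of the general formula with quantities on $E_1$ and $E_2$.

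\emph{Step 1: choice of cycle and verification of hypotheses.} Let $\tau_1(x,y)=(-x,y)$ and $\tau_2(x,y)=(-x,-y)$ be the bielliptic involutions cutting out $f_1$ and $f_2$. I take $Z = \Gamma_{\tau_1}-\Gamma_{\tau_2}$ (adjusted by horizontal and vertical divisors so that its intersection number with $\Delta_X-X\times P_1-P_2\times X$ vanishes; this adjustment does not affect the later computation). The cycle $Z$ is symmetric as a correspondence, but the graded sign coming from the odd degree of $H^1$ makes its Kunneth image in $H^1\otimes H^1$ antisymmetric, so its image in $\wedge^2 H^1(\overline{X})$ is non-zero; it realises the class in $\NS(J)$ arising from the isogeny decomposition $J\sim E_1\times E_2$. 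The hypotheses of Theorem \ref{biellipticformula}(ii) are then satisfied: $\rank J(K)=\rank E_1(K)+\rank E_2(K)=2=g$, and $\rank \NS(J)\ge 2$.

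\emph{Step 2: identification of $D(b,z)$ and the local height decomposition.} Pulling back under the diagonal gives $\Delta_X^{-1}Z = R_1-R_2$, where $R_i$ is the ramification divisor of $f_i$. The divisor $D(b,z)$ attached to $(b,z,Z)$ is, up to principal divisors, the difference $(\tau_1 z-\tau_1 b)-(\tau_2 z-\tau_2 b)$ of pullbacks from $E_1$ and $E_2$, and the local height $h_v(z,D(b,z))$ decomposes under the projection formula for $f_1$ and $f_2$ into $h_{E_1,v}(f_1(z))-h_{E_2,v}(f_2(z))$, plus a contribution from the principal divisor witnessing the relation $f_1^*X_{E_1}\cdot f_2^*X_{E_2}=c$, i.e.\ the function $x^2 = f_1^*X_{E_1}$ on $X$. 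Since $\chi_v(x^2)=2\chi_v(x)$, this contribution is precisely $-2\chi_v(x(z))$, yielding part (i). Vanishing for almost all $v$ and at primes of potential good reduction then follows directly from part (i) of Theorem \ref{biellipticformula}.

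\emph{Step 3: the bilinear form.} Under the isogeny $J\sim E_1\times E_2$, the cotangent space $H^0(X_{K_{\mathfrak{p}}},\Omega^1)^*$ splits as $H^0(E_1,\Omega^1)^*\oplus H^0(E_2,\Omega^1)^*$, and the bilinear form $B$ produced by Theorem \ref{biellipticformula}(ii) decomposes accordingly, acting as a scalar on each one-dimensional summand generated by $\log_{E_i}(P_i)$. Global compatibility of the $p$-adic height pairing on $E_i$ with its local components, combined with the rank-one hypothesis (which forces every $E_i(K)$-point to be a $\Q$-multiple of $P_i$ modulo torsion), pins down the scalar as $\alpha_i/[K:\Q]$ up to a sign; the $[K:\Q]$ factor comes from the sum-of-places normalisation of the global height at the split prime $\mathfrak{p}$. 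Matching signs then produces the terms $-\alpha_1 \log_{E_1}(f_1(z))^2+\alpha_2\log_{E_2}(f_2(z))^2$ in the final formula, and rearranging the global quadratic Chabauty identity isolates $X(K)$ inside the prescribed set.

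\emph{Main obstacle.} The delicate point is Step 2: one must carefully identify $D(b,z)$ with the prescribed combination of divisors up to principal divisors, and then track the principal divisor arising from the relation between $f_1^*X_{E_1}$ and $f_2^*X_{E_2}$ so that the contribution to the local height is exactly $-2\chi_v(x(z))$ with the correct sign. A secondary subtlety is Step 3: pinning down the precise scalar $\alpha_i/[K:\Q]$ requires unwinding the definition of $B$ in Theorem \ref{biellipticformula} from the mixed extensions of unipotent path torsors and comparing with Nekov\'a\v r's global formula for the $p$-adic height pairing on $E_i$.
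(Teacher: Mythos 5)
Your proposal takes the same route as the paper: specialise Theorem~\ref{biellipticformula} with the cycle $Z=Z_1-Z_2$, where $Z_i=\Gamma_{g_i}$ are the graphs of the two bielliptic involutions, compute $D(b,z)$, reduce the local heights on $X$ to local heights on $E_1,E_2$ via the covering maps, and identify the bilinear form from the rank-one hypothesis. The high-level plan is correct, but two steps are not carried out correctly as written.

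\emph{Step 2 has a concrete error.} From Definition~\ref{defnofdbz} together with the restrictions
$i_1^*(Z_1-Z_2)=g_1(b)-g_2(b)$, $i_2^*(Z_1-Z_2)=g_1(z)-g_2(z)$, and $i_\Delta^*(Z_1-Z_2)=(0,\sqrt c)+(0,-\sqrt c)-\infty-w(\infty)=\div(x)$, one gets
\[
D(b,z)=\div(x)-g_1(z)+g_2(z)-g_1(b)+g_2(b),
\]
so up to the principal divisor $\div(x)$ the $z$-terms and the $b$-terms carry the \emph{same} sign pattern $-\tau_1+\tau_2$. Your expression $(\tau_1 z-\tau_1 b)-(\tau_2 z-\tau_2 b)$ has the signs on the $z$-terms flipped; it is not even linearly equivalent to $D(b,z)$ (the difference is $2(\tau_2 z-\tau_1 z)$, which is generally non-principal). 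Moreover, once one does write $D(b,z)$ in terms of pullbacks $f_1^*D_1-f_2^*D_2$ with $D_i$ on $E_i$, the resulting local heights involve divisors with non-disjoint support, so the projection formula alone is not enough: one needs the extended Coleman--Gross pairing and the hyperelliptic height identities of $\S\ref{alltheheightcomputations}$ (Lemmas~\ref{anotherheightidentity}--\ref{height:zwz}), exactly as the paper uses in Lemmas~\ref{asintheproof} and \ref{completingproof}. Your ``projection formula plus tracking the principal divisor $x^2$'' sketch skips all of this and would not, as stated, produce the $-2\chi_v(x(z))$ term with the correct coefficient.

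\emph{Step 3 is inconsistent with the statement.} You claim the scalar in $B$ on the $E_i$-summand is $\alpha_i/[K:\Q]$, but the theorem's formula has $\alpha_i\log_{E_i}(f_i(z))^2$, with $\alpha_i=h_{E_i}(P_i)/\bigl([K:\Q]\log_{E_i}(P_i)^2\bigr)$ already carrying the $[K:\Q]$ in its denominator; your proposed scalar would introduce an extra $[K:\Q]$. The mechanism you want is simpler and does not need the bilinear form $B$ from Theorem~\ref{biellipticformula} at all: for $z\in X(K)$, write the global height $h(z-b,D(b,z))=\sum_v h_v(z-b,D(b,z))$, use Lemma~\ref{completingproof} to express each $h_v$ in the bielliptic shape, use that the global idele-class character sums to zero on $K^\times$, and then use rank one to write $h_{E_i}(f_i(z))$ as a constant multiple of $\log_{E_i}(f_i(z))^2$. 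Solving for the $\mathfrak p$-term gives the displayed formula. You should verify that the $b$-dependent constants cancel, which they do once $h_{E_i}(f_i(b))$ is itself rewritten via the rank-one relation. As written, the sign matching and the $[K:\Q]$ bookkeeping are asserted rather than checked, and the asserted value is wrong.
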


We further show how Theorem \ref{biellexample} can be used in conjunction with other techniques to determine the set $X(K)$. In Section \ref{howtocomputethings}, we give an algorithm to compute the quantities in Theorem \ref{biellexample} and present two examples using the algorithm. Appendix \ref{naive} explains how one may give an elementary proof of part (i) of Theorem \ref{biellexample}. Appendix \ref{steffenappendix}, by J. Steffen M\"{u}ller, discusses how the Mordell-Weil sieve can be used with quadratic Chabauty to find rational points and describes the sieving carried out to recover $X_0(37)(\Q(i))$ after applying the algorithm for a suitably chosen collection of primes.

In the sequel to the present work, a slightly more general framework is developed \cite{balakrishnan2017quadratic}, which has some practical advantages for computing rational points on curves with everywhere potential good reduction. In recent work with M\"{u}ller, Tuitman, and Vonk \cite{BDMTV}, we use the methods described in these papers to determine the rational points on $X_{s }^+ (13)$, the split Cartan modular curve of level 13, the last remaining case of Serre's uniformity problem for normalisers of split Cartan subgroups, after the work of Bilu, Parent, and Rebolledo \cite{bilu-parent, bilu-parent-rebolledo}.

\section{The Chabauty-Kim method}\label{sec:chabautykim}
We begin by recasting the Chabauty-Coleman method in a motivic framework and then use this to describe Kim's generalisation. Nothing in the section is new, although as far as we are aware, the statement of Lemma \ref{notinttheliterature} is not in the literature.  In this section, $X$ is a smooth projective curve of genus $g$ over a number field $K$.  (By a \emph{curve} over a field $K$ we shall always mean a separated, geometrically integral scheme over $K$ of dimension 1.)  Let $T_0$ denote the set of primes of bad reduction for $X$, let $p$ be a prime of $\mathbb{Q}$ which splits completely in $K$ and is coprime to $T_0$. Let $T=T_0 \cup \{ v|p \}$, and fix a prime $\mathfrak{p}$ lying above $p$. Let $G_T$ denote the maximal quotient of the Galois group of $K$ unramified outside $T$. Unless otherwise indicated, when we write $G$ we will mean either $G_T $ or $G_v $ for $v$ a prime of $K$. 
\subsection{The Chabauty-Coleman method}
We begin with the classical description of the Chabauty-Coleman method. Fix a basepoint $b \in X(K)$ and let $\iota$ denote the Abel-Jacobi map 
$$ \iota:X \hookrightarrow J; \quad
z \mapsto [(z)-(b)]. $$
Let $\log_J :J(K_{\mathfrak{p}})\to H^0 (J_{K_\mathfrak{p}},\Omega ^1 )^*
$ denote the $\mathfrak{p}$-adic logarithm map for the abelian variety $J$. Consider the following diagram:

$$\begin{tikzpicture}
\matrix (m) [matrix of math nodes, row sep=3em,
column sep=3em, text height=1.5ex, text depth=0.25ex]
{X(K) & J(K)  \\
 X(K_{\mathfrak{p}} ) & J(K_{\mathfrak{p}}) & H^0(J_{K_{\mathfrak{p}}}, \Omega^1)^* &  H^0(X_{K_{\mathfrak{p}}}, \Omega^1)^*\\ };
\path[->]
(m-1-1) edge[auto] node[auto] {$\iota  $} (m-1-2)
edge[auto] node[auto] {} (m-2-1)
(m-1-2) edge[auto] node[auto] {} (m-2-2)
edge[auto] node[right] { } (m-2-4)
(m-2-1) edge[auto] node[auto] {$ \iota_{\mathfrak{p}}$ } (m-2-2)
(m-2-2) edge[auto] node[auto] {$\log_J$} (m-2-3)
(m-2-3) edge[auto] node[auto] {$\simeq $}(m-2-4);
\end{tikzpicture} $$

The image of $z$ under the composite map 
$
h: X(K_{\mathfrak{p}})\to  H^0(X_{K_{\mathfrak{p}}}, \Omega^1)^*
$
may be described as the functional sending a global differential $\eta$ to the Coleman integral $\int ^z _b \eta $.  Since the Mordell-Weil rank of $J$ is less than $g$, there is a nonzero differential $\omega $ in $H^0 (X_{K_\mathfrak{p}},\Omega ^1 )^* $ that annihilates the image of $J(K)\otimes \mathbb{Q}_p $. Hence $X(K) \subset X(K_{\mathfrak{p}})$ lies in the set of points for which $\int ^z _b \omega =0$. 

A description of the Chabauty-Coleman method more amenable to nonabelian generalisation is in terms of some standard facts from Galois cohomology and $p$-adic Hodge theory (see e.g., \cite[\S 1]{Nek93}, \cite[\S 3]{bloch-kato} or \cite[\S I.3]{fontaine1994perrin}).
We begin by letting $V:=H^1 _{\acute{e}t}(\overline{X},\mathbb{Q}_p (1))$ and define 
$H^1 _f (G_T ,V)$ to be the subspace of the space of continuous cohomology classes in $H^1 (G_T ,V)$  which are 
crystalline at all primes above $p$. Let $\kappa$ be the \emph{\'etale Abel-Jacobi} map
$$
\kappa :\Div^0 (X) \otimes \mathbb{Q}_p \to H^1 (G_T ,V)
$$
sending a divisor $\sum \mu _i z_i $ to the Kummer class of $[\sum \mu _i z_i ]\in J(K)\otimes \mathbb{Q}$ 
in $$H^1 (G_T ,T_p J)\otimes \mathbb{Q}_p =H^1 (G_T ,V).$$ 

This may be related to $p$-adic Hodge theory as follows.  We first briefly recall the Fontaine functors $D_{\cry}$ and $D_{\dR}$, which send $p$-adic Galois representations to various enriched vector spaces. Associated to $V$ there is a vector space $D_{\cry} (V):=H^0 (G_{\Q _p },V\otimes B_{\cry })$, where $B_{\cry }$ is Fontaine's ring of crystalline periods. The filtration $F^i B_{\cry }$ and Frobenius action on $B_{\cry }$ induces a filtration $F^i $ and Frobenius action on $D_{\cry }(V)$. As explained in \cite[\S 3.11]{bloch-kato}, the exact sequence 
\[
0 \to \Q _p \to B_{\cry }^{\phi =1}\to B_{\dR}/F^0 \to 0
\]
induces an isomorphism $H^1 _e (G_p ,V) \simeq D_{\dR}(V)/F^0 $ (the Bloch-Kato logarithm) where $D_{\dR}(V)$ is the filtered vector space $H^0 (G_{\Q _p },V\otimes B_{\dR})$ with filtration induced by the filtered ring $B_{\dR}$, and $H^1 _e (G_{\Q _p },V):=\Ker (H^1 (G_{\Q _p },V)\to H^1 (G_{\Q _p },V\otimes B_{\cry }^{\phi =1}))$. Moreover, in this case we have $H^1 _e (G_{\Q _p },V)=H^1 _f (G_{\Q _p },V)$. Returning to the Abel-Jacobi map,
 $\kappa $ lands in the subspace $H^1 _f (G_T ,V)$, and there
is a commutative diagram
$$
\begin{tikzpicture}
\matrix (m) [matrix of math nodes, row sep=3em,
column sep=3em, text height=1.5ex, text depth=0.25ex]
{X(K) & H^1 _{f}(G_T ,V)  \\
 X(K_{\mathfrak{p}} ) & H^1 _f (G_{\mathfrak{p}} ,V) & D_{\dR}(V)/F^0 \\ };
\path[->]
(m-1-1) edge[auto] node[auto] {$\kappa  $} (m-1-2)
edge[auto] node[auto] {} (m-2-1)
(m-1-2) edge[auto] node[auto] {$\loc _{\mathfrak{p}} $} (m-2-2)
edge[auto] node[auto] { } (m-2-3)
(m-2-1) edge[auto] node[auto] {$ \kappa _{\mathfrak{p}}$ } (m-2-2)
(m-2-2) edge[auto] node[auto] {$\simeq $} (m-2-3);
\end{tikzpicture} $$
where the top map sends $z$ to $\kappa (z-b)$, and  the bottom right isomorphism is via $p$-adic Hodge theory. Moreover, the Bloch-Kato logarithm is compatible with the usual $p$-adic logarithm: i.e., the composite map 
$
j: X(K_{\mathfrak{p}})\to D_{\dR }(V)/F^0 
$
may be described (see \cite[3.11.1]{bloch-kato}), via the isomorphism 
$$
D_{\dR }(V)/F^0 \simeq H^1 _{\dR }(X)^* /F^0 \simeq H^0 (X,\Omega ^1 )^* ,
$$ 
as the map sending $z$ to the functional sending a global differential $\eta $ to the Coleman integral $\int ^z _b \eta $. Now as before, we have that $X(K) \subset X(K_{\mathfrak{p}})$ lies in the set of points for which $\int ^z _b \omega =0$. 
\subsubsection{Refinements over number fields}\label{samir}
In \cite{Siksek:ECNF}, Siksek explains a refinement of the classical Chabauty-Coleman method over number fields. As explained in loc. cit., heuristically one might expect that if $X$ is a curve of genus $g$ defined over a number field $K$ of degree $d$ over $\mathbb{Q}$, then the Chabauty-Coleman method works whenever the rank of $J(K)$ is less than or equal to $d(g-1)$ (as the Weil restriction of $X$ is now a $g$-dimensional subscheme of the Weil restriction of its Jacobian). In \cite[Theorem 2]{Siksek:ECNF}  a precise technical condition on linear independence of $p$-adic integrals is given which is sufficient to ensure that the Chabauty-Coleman method produces a finite set of points in $\prod _{\mathfrak{p}|p}X(K_{\mathfrak{p}})$.
\subsection{The Chabauty-Kim method}
We now explain how this motivic approach generalises.
Given $b \in X(K)$, let $\pi _1 ^{\acute{e}t,\mathbb{Q}_p}(\overline{X},b)$ denote the unipotent $\mathbb{Q}_p $-\'etale fundamental group of $\overline{X}$ with basepoint $b$ \cite{deligne1989groupe}. Recall that this is equal to the $\mathbb{Q}_p $-Maltsev completion of the usual \'etale fundamental group. In particular, as a pro-algebraic group (i.e. forgetting about the Galois action) it is isomorphic to the quotient of 
a free pro-unipotent group on $2g$ generators by one relation. Let $U^{(0)}:=\pi _1 ^{\acute{e}t,\mathbb{Q}_p }(\overline{X},b)$, and for $i>0$ define $U^{(n)}:=[U^{(0)},U^{(n-1)}]$. Define
$$
U_n := U_n (b)=\pi _1 ^{\acute{e}t,\mathbb{Q}_p }(\overline{X},b)/U^{(n)},
$$
and define
$$
U[n]:=\Ker (U_n \to U_{n-1}).
$$
We will mostly be interested in the case when $n=2$. In this case, using the standard presentation of the topological fundamental group of a surface of genus $g$, we deduce that the sequence of Galois representations 
\begin{equation}\label{fiddly}
0\to H^2 _{\acute{e}t}(\overline{X},\mathbb{Q}_p )^* \stackrel{\cup ^* }{\longrightarrow }\wedge ^2 V \to U[2]\to 0.
\end{equation}
is exact.
Define
$$
P_n (b,z):=\pi _1 ^{\acute{e}t}(\overline{X};b,z)\times _{\pi _1 ^{\acute{e}t}(\overline{X},b)}U_n (b).
$$
Then the assignment $z\mapsto [P_n (b,z)]$ defines a map
$$
X(K)\to H^1 (G_T ,U_n (b)).
$$
One of the fundamental insights of the theory of Selmer varieties is that the cohomology spaces $H^1 (G,U(b))$ carry a much 
richer structure than merely that of a pointed set, and that this extra structure has Diophantine applications.
\begin{Theorem}[Kim \cite{kim:siegel}]
Let $U$ be a finite-dimensional unipotent group over $\mathbb{Q}_p $, admitting a continuous action of $G$. Let $U=U^{(0)} \supset U^{(1)} \supset \ldots $ be the central series filtration. Suppose 
$H^0 (G,U^{(i)} /U^{(i+1)})(\mathbb{Q}_p )=0$ for all $i$. Then the functor
\begin{equation}\nonumber
R\mapsto H^1 (G,U(R))
\end{equation}
from $\Q _p $-algebras to sets
is represented by an affine scheme of finite type over $\mathbb{Q}_p $, such that the six-term exact sequence in nonabelian 
cohomology is a diagram of schemes over $\mathbb{Q}_p $. 
\end{Theorem}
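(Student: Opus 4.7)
The plan is to proceed by induction on the nilpotency class of $U$, using a $G$-stable filtration $U=U^1\supset U^2\supset\cdots$ (for instance the lower central series) whose successive quotients $U^i/U^{i+1}$ are finite-dimensional $\mathbb{Q}_p$-vector spaces equipped with a continuous $G$-action.

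For the base case, $U$ is abelian, i.e. a finite-dimensional $\mathbb{Q}_p$-vector space $V$ with continuous $G$-action. Then $U(R)=V\otimes_{\mathbb{Q}_p}R$, and continuous group cohomology commutes with the flat base change $\mathbb{Q}_p\to R$, giving $H^1(G,U(R))\cong H^1(G,V)\otimes_{\mathbb{Q}_p}R$ as $R$-modules. Finiteness of $H^1(G,V)$ (Tate local duality when $G=G_v$, and finiteness of $H^1_f$ for the global setting) ensures this functor is represented by the affine space associated to the finite-dimensional $\mathbb{Q}_p$-vector space $H^1(G,V)$.

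For the inductive step, fix $n$ and form the central extension $1\to U^{n-1}/U^n\to U/U^n\to U/U^{n-1}\to 1$. This yields the six-term exact sequence of pointed sets in nonabelian cohomology, functorial in the coefficient algebra $R$. The hypothesis $H^0(G,U^i/U^{i+1})(\mathbb{Q}_p)=0$ collapses the left end and ensures that the action of $H^1(G,U^{n-1}/U^n)$ on the fibres of $H^1(G,U/U^n)\to H^1(G,U/U^{n-1})$ is free. One then identifies $H^1(G,U/U^n)$ with the scheme-theoretic fibre above $0$ of the obstruction morphism $H^1(G,U/U^{n-1})\to H^2(G,U^{n-1}/U^n)$, equipped with the structure of a torsor under the affine-space functor represented by $H^1(G,U^{n-1}/U^n)$. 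The inductive hypothesis makes $H^1(G,U/U^{n-1})$ affine, and trivialising the torsor globally (by choosing a continuous $1$-cochain lifting the universal cocycle) realises $H^1(G,U/U^n)$ as an affine scheme over the affine fibre. Functoriality of the six-term sequence in $R$ then automatically upgrades each arrow in it to a morphism of $\mathbb{Q}_p$-schemes.

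The main obstacle is the algebraicity of the connecting and obstruction maps, which are defined nonlinearly on cochains: one chooses a set-theoretic lift of a $1$-cocycle with values in $U/U^{n-1}$ and measures its failure to be a cocycle in $U/U^n$. To see that the resulting formulas are morphisms of schemes, one realises the continuous cohomology of $G$ in a finite-dimensional coefficient module as the cohomology of an explicit bounded complex of $\mathbb{Q}_p$-vector spaces (Tate's continuous cochain complex, cut down in the global case by the unramified-outside-$T$ and crystalline-at-$p$ conditions that define $H^1_f$), and verifies that cup products and the boundary maps are polynomial in the entries of the universal cocycle. Once this algebraicity is in place, the whole six-term sequence becomes a diagram of affine $\mathbb{Q}_p$-schemes, completing the induction.
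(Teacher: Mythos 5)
The paper does not prove this theorem: it is quoted from Kim's Siegel paper \cite{kim:siegel} and used as a black box, so there is no ``paper's own proof'' to compare against. Your sketch is, in spirit, a reconstruction of Kim's original inductive argument (filtration by the lower central series, six-term sequence for the central extension, torsor structure over the image of the obstruction map), so it is not a different route so much as an attempted reproof of the cited result.

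That said, there are places where your outline leaves genuine work undone. First, you assert that the vanishing of $H^0(G, U^{i}/U^{i+1})$ makes the $H^1(G, U^{n-1}/U^n)$-action on fibres free, but what the six-term sequence (after twisting to move a fibre to the one over the basepoint) actually gives you is that the stabiliser of a class is the image of $H^0(G, (U/U^{n-1})^{(\gamma)})$ under the connecting map, where $\gamma$ is a twisting cocycle. So freeness requires the \emph{twisted} $H^0$ to vanish; this does follow, because a twist by inner automorphisms leaves the associated graded pieces unchanged, so the hypothesis propagates to every twist, but you should say so. Second, the torsor-trivialisation step is circular as written: you invoke the torsor structure on a functor whose representability is the thing being proved, and ``choosing a continuous $1$-cochain lifting the universal cocycle'' presupposes the universal object you are trying to construct. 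Finally, your appeal to ``an explicit bounded complex of $\mathbb{Q}_p$-vector spaces'' is not quite available: Tate's continuous cochain complex of a profinite $G$ is very much infinite-dimensional, so one cannot simply treat the cocycle and coboundary maps as polynomial endomorphisms of an affine space. The actual content of Kim's proposition is precisely to manufacture a \emph{finite-dimensional} algebraic model---either by cutting $Z^1(G,U)$ out of $U^d$ after reducing to a suitable finitely generated quotient of $G$, or by leveraging the finiteness of $H^i(G,V)$ for $i\le 2$---and then to quotient by the free algebraic action of $U$ by coboundaries, which is again affine because the action is free and unipotent. Your sketch gestures at this but does not resolve it, and that step is where the theorem's difficulty really lives.
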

In this paper we will never distinguish between such a cohomology scheme and its $\mathbb{Q}_p $-points.
We now take $U=U(b)$ to be a finite-dimensional $G_T$-stable quotient of $U_n (b)$ whose abelianisation equals $V$.
Note that since the abelianisation of $U(\mathbb{Q}_p )$ has weight $-1$, it satisfies the hypotheses of the theorem, and hence 
$H^1 (G ,U)$ has the structure of the $\mathbb{Q}_p $-points of an algebraic variety over $\mathbb{Q}$. For $z$ a point of $X$, we denote by $P(z)=P(b,z)$ the push-out of $P_n (b,z)$ by $U_n \to U$.
\subsection{Local conditions}
To go from the cohomology varieties $H^1 (G_T ,U)$ to Selmer varieties, one must add local conditions.
For each $v\nmid p$, there is a 
\textit{local unipotent Kummer map} 
$$
j_v :X(K_v ) \to H^1 (G_v ,U); \quad
z \mapsto [P(z)]
$$
which is trivial when $v$ is a prime of potential good reduction and has finite image in general \cite{kim2008component}.
For $\mathfrak{p} \mid p$, by the work of Olsson \cite{olsson2011towards}, the assignment $x\mapsto [P(x)]$ lands inside the subspace of \textit{crystalline} torsors 
$H^1 _f (G_{\mathfrak{p}}  ,U)$. We define  
\begin{equation}\nonumber
j_{\mathfrak{p}} : X(K_{\mathfrak{p}} )\to H^1 _f (G_{\mathfrak{p}}  ,U).
\end{equation}

There is then a commutative diagram
\begin{equation} \label{themaindiagram}
\begin{tikzpicture}
\matrix (m) [matrix of math nodes, row sep=2em,
column sep=2em, text height=1.5ex, text depth=0.25ex]
{X(K) & H^1 (G_T ,U) \\
\prod _{v\in T} X(K_v ) & \prod _{v\in T} H^1 (G_v ,U). \\};
\path[->]
(m-1-1) edge[auto] node[auto]{} (m-2-1)
edge[auto] node[auto] {  } (m-1-2)
(m-2-1) edge[auto] node[auto] { } (m-2-2)
(m-1-2) edge[auto] node[auto] {$\prod \loc _v $} (m-2-2);
\end{tikzpicture} \end{equation}
It is also shown in \cite{kim:siegel} that the
localisation morphisms are morphisms of varieties, and the set of crystalline cohomology classes has the structure of the $\mathbb{Q}_p$-points of a variety. 
We would like to understand the following subscheme of $H^1 (G_T ,U)$:

\begin{Definition}The \textit{Selmer variety} of $U$, denoted $\Sel(U)$, is the reduced scheme associated to the subscheme of 
$H^1 (G_T ,U)$  consisting of cohomology classes $c$ satisfying the following conditions:
\begin{enumerate}
 \item {$\loc _v (c)$ comes from an element of $X(K_v )$ for all $v$ prime to $p$},
 \item {$\loc _v (c)$ is crystalline for all $v$ above $p$,}
 \item {the projection of $c$ to $H^1 (G_T ,V)$ lies in the image of $J(K)\otimes \mathbb{Q}_p $.}
\end{enumerate}
\end{Definition}

\begin{Remark} 
As this definition is slightly non-standard, we briefly recall other definitions of Selmer varieties and Selmer schemes which appear in the literature. In \cite{kim:siegel}, it is proved that $H^1 (G_T ,U)$, $H^1 (G_v ,U)$ and the corresponding cohomology groups with local conditions are represented by affine schemes of finite type over $\Q _p $. However, as explained in \cite{kim2012tangential}, in general these cohomology varieties need not be reduced. The definition given above is most similar to the definition of the Selmer scheme given in \cite{balakrishnan2012non}. There, the authors define the Selmer scheme of $U$ to be the intersection over all $v\neq p$ (equivalently over all $v \in T_0 $) of $\loc _v ^{-1}(j_v (X(\Q _v )))$. If we denote this scheme by $\Sel'(U)$, then $\Sel (U)$ is simply the reduced scheme associated to the fibre product $\Sel'(U)\times _{H^1 _f (G_T ,V)}J(K)\otimes \Q _p $. The reason we adopt this more utilitarian definition is to avoid any assumptions on the finiteness of the Shafarevich-Tate group of the Jacobian of $X$ in the statement of our results.
\end{Remark}

\subsection{Applications to Diophantine geometry}
Let $\mathfrak{p}$ be a prime above $p$.
We have a refinement of the commutative diagram (\ref{themaindiagram}):
\begin{equation*}
\begin{tikzpicture}
\matrix (m) [matrix of math nodes, row sep=2em,
column sep=2em, text height=1.5ex, text depth=0.25ex]
{X(K) & \Sel (U(b)) \\
X(K_{\mathfrak{p}} ) & H^1 _f (G_{\mathfrak{p}} ,U(b)). \\};
\path[->]
(m-1-1) edge[auto] node[auto]{} (m-2-1)
edge[auto] node[auto] {$j$  } (m-1-2)
(m-2-1) edge[auto] node[auto] {$j_{\mathfrak{p}}$ } (m-2-2)
(m-1-2) edge[auto] node[auto] {$\loc _{\mathfrak{p}} $} (m-2-2);
\end{tikzpicture} \end{equation*}

The map $j_{\mathfrak{p}}$ is not algebraic, but is locally analytic, i.e., on each residue disk in $X(K_{\mathfrak{p}} )$, we have that $j_{\mathfrak{p}}$ is given by a $p$-adic 
power series. Furthermore by \cite{kim:chabauty}, $j_{\mathfrak{p}}$ has Zariski dense image. Hence if $\loc _{\mathfrak{p}}$ is not dominant, then 
the set $j_{\mathfrak{p}}^{-1}(\loc _{\mathfrak{p}}(\Sel (U)))$ is finite.
Note that the case $n =1$ now recovers the Chabauty-Coleman method.
\begin{Definition}
Define the set $X(K_{\mathfrak{p}})_U \subset X(K_{\mathfrak{p}})$ to be $j_{\mathfrak{p}}^{-1}(\loc _{\mathfrak{p}}(\Sel (U)))$. When $U=U_n $, we write $X(K_{\mathfrak{p}})_{U_n }$ as $X(K_{\mathfrak{p}})_n $.
\end{Definition}
\begin{Remark}The sets $X(K_{\mathfrak{p}})_n$ are contained in the set of points which are \textit{weakly global of level $n$}, defined in \cite{balakrishnan2012non}. If the $p$-primary part of the Shafarevich-Tate group of the Jacobian of $X$ is finite, then the two sets are equal.
\end{Remark}
\subsection{Properties of $\Sel (U )$}\label{twistingproperties}
In this subsection we recall some properties of the varieties $\Sel (U)$.  We make repeated use of the twisting construction in nonabelian cohomology, as in \cite[I.5.3]{serre:gc}. For topological groups $U$ and $W$, equipped with a continuous homomorphism $U\to \Aut (W)$, and a continuous $U$-torsor 
$P$, we shall denote by $W^{(P)}$ the group obtained by twisting $W$ by the $U$-torsor $P$:
$$
W^{(P)}:=W\times _U P .
$$
Given a group $U$ with an action of $G$ and a continuous $G$-equivariant 
$U$-torsor $P$, we may form a group $U^{(P)}$ which is the twist of $U$ by the $U$-torsor $P$, where $U$ acts on 
itself by conjugation. There is a bijection
$$
H^1 (G,U)\to H^1 (G,U^{(P)})
$$
which sends $G$-equivariant $U$-torsors to $G$-equivariant $U^{(P)}$-torsors. We will make use of the following properties of 
the twisting constructions:
\begin{itemize}
\item The $U$-torsor $P$ is sent to the trivial $U^{(P)}$-torsor.
\item If $H$ is a subgroup of $G$, $U$ is a $G$-group and $P$ is a $G$-equivariant $U$-torsor, then the following diagram commutes:
$$
\begin{tikzpicture}
\matrix (m) [matrix of math nodes, row sep=2em,
column sep=2em, text height=1.5ex, text depth=0.25ex]
{H^1 (G,U) & H^1 (G,U^{(P)}) \\
H^1 (H,U) & H^1 (H,U^{(P)}). \\ };
\path[->]
(m-1-1) edge[auto] node[auto]{} (m-2-1)
edge[auto] node[auto] {} (m-1-2)
(m-1-2) edge[auto] node[auto]{} (m-2-2)
(m-2-1) edge[auto] node[auto] {} (m-2-2);
\end{tikzpicture} $$
\item If $U\to W $ is a homomorphism of $G$-groups, then the diagram
$$
\begin{tikzpicture}
\matrix (m) [matrix of math nodes, row sep=2em,
column sep=2em, text height=1.5ex, text depth=0.25ex]
{H^1 (G,U) & H^1 (G,U^{(P)}) \\
H^1 (G,W) & H^1 (G,W^{(Q)}) \\ };
\path[->]
(m-1-1) edge[auto] node[auto]{} (m-2-1)
edge[auto] node[auto] {} (m-1-2)
(m-1-2) edge[auto] node[auto]{} (m-2-2)
(m-2-1) edge[auto] node[auto] {} (m-2-2);
\end{tikzpicture} $$
commutes, where $P$ is a $G$-equivariant $U$-torsor and $Q$ is the $W$-torsor $P\times _U W$.
\end{itemize}
Since the twisting construction is functorial, if $H^1 (G,U)$ and $H^1 (G,U^{(P)})$ are representable, then the twisting isomorphism is an isomorphism of schemes. This implies the following lemma, which is used in the next section. To state the lemma, let $\alpha _1 ,\ldots ,\alpha _N \in \Sel (U)$ be a set of representatives for the image of $\Sel (U)$ in $\prod _{v\in T_0 }j_v (X(\Q _v ))$.

\begin{Lemma}\label{notinttheliterature}
$\Sel (U)$ is isomorphic to $\sqcup _{i=1}^N H^1 _{\mathcal{O}_K }(G_T ,U^{(\alpha _i )})$, where $H^1 _{\mathcal{O}_K}(G_T ,U^{(\alpha _i )})$ is defined to be the scheme representing $U^{(\alpha _i )}$ cohomology classes which are crystalline at $p$ and trivial at all other primes. \end{Lemma}
\begin{proof}
Let $\alpha \in \prod _{v\in T_0 }j_v (X(K_v ))$ be in the image of $\Sel (U)$ under the map $\prod _{v\in T_0 }\loc _v$, and let $\Sel (U)_{\alpha }$ denote the fibre of $\alpha $ in $\Sel (U)$. We show that $\Sel (U)_\alpha $ is isomorphic to $H^1 _{\mathcal{O}_K}(G_T ,U)$. The first two bullet points above imply that  the twisting morphism
$$
H^1 (G_T ,U)\to H^1 (G_T ,U^{(\alpha )})
$$
sends $\loc _v ^{-1}(\loc _v (\alpha ))$ to $\loc _v ^{-1}(1)$. The first and third bullet points imply that the twisting morphism sends the pre-image of $J(K)\otimes \Q _p $ to itself.
Finally, using all three bullet points, we see that the twisting morphism sends crystalline $U$-torsors to crystalline $U$-torsors.
\end{proof}

\section{Non-density of the localisation map}\label{sec:nond}

For the rest of this paper, we take $K$ to be $\mathbb{Q}$ or an imaginary quadratic extension of $\mathbb{Q}$.
Unless otherwise stated, we will henceforth take $U$ to be a quotient of $U_2 $ surjecting onto $V$.
From the standard presentation of the topological fundamental group of a smooth surface of genus $g$ in terms of $2g$ generators and 1 relation between commutators, the natural map
$
\wedge ^2 V\to U[2]
$
gives an exact sequence
\begin{equation}\label{cup:exact}
0\to H^2 _{\acute{e}t}(\overline{X})^* \stackrel{\cup ^* }{\longrightarrow }\wedge ^2 V \to U[2] \to 0.
\end{equation}
Hence the quotients $U$ intermediate between $U_2 $ and $V$ correspond to Galois subrepresentations of $\wedge ^2 V/H^2 _{\acute{e}t} (\overline{X})^* $.
Note that for any such choice of $U$, there is an inclusion $X(K_{\mathfrak{p}})_2 \subset X(K_{\mathfrak{p}})_U$.
In this paper we restrict attention to the case where $[U,U]$ is isomorphic to $\mathbb{Q}_p (1)^n$ for some $n\geq 1$.
\subsection{Finiteness results}
The reason for considering quotients of the fundamental group which are extensions of $V$ by $\mathbb{Q}_p (1)^n $ is that 
\begin{equation}\label{eqn:coh_fin2}
H^1 _f (G_{\mathfrak{p}},\mathbb{Q}_p (1))\simeq \mathcal{O}_{\mathfrak{p}}^\times \otimes \mathbb{Q}_p \simeq \mathbb{Q}_p,
\end{equation}
 (the first isomorphism may be found in \cite[3.9]{bloch-kato}, and the second comes from the fact that we assume that $p$ splits in $K$, so $K_{\mathfrak{p}}\simeq \Q _p$), and hence by Kummer theory
\begin{equation}\label{eqn:coh_fin1} H^1 _f (G_T ,\mathbb{Q}_p (1))\simeq \mathcal{O}_K ^\times \otimes \mathbb{Q}_p =0.
\end{equation}
This means $\dim H^1 _f (G_T ,\mathbb{Q}_p (1))=0$ and $\dim H^1 _f (G_{\mathfrak{p}},\mathbb{Q}_p (1))=1$ (this is the only place where our restrictions on $K$ are essential). In many situations, 
the Galois cohomology computation above is enough to prove non-density of the localisation map for $\Sel (U)$.
\begin{Lemma}\label{nondensitylemma}
Let $U$ be a quotient of $U_2 $ which is an extension of $V$ by $\mathbb{Q}_p (1)^n $. Let $p$ be a prime of $\mathbb{Q}$ such that $X$ has good reduction at all primes above $p$, and let $\mathfrak{p}$ be a prime above $p$. \\
(i) The dimension of $\Sel (U)$ is bounded above by $\rk J(K)$. \\
(ii) The dimension of $H^1 _f (G_{\mathfrak{p}} ,U)$ is equal to $g+n$. \\
\end{Lemma}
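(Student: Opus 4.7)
Both parts descend from the central extension
$$1 \to \mathbb{Q}_p(1)^n \to U \to V \to 1$$
(central because $[U,U] = \mathbb{Q}_p(1)^n$), which for each Galois group of interest induces a long exact sequence of (pointed) cohomology schemes. The strategy is to pinch $H^1(G,U)$ between the abelian cohomologies of $V$ and of the central kernel and then invoke the Galois cohomological inputs already recalled.

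For (i), the Selmer condition (3) forces the image of $\Sel(U)$ in $H^1(G_T, V)$ to lie in $J(K) \otimes \mathbb{Q}_p$, which has dimension $\rk J(K)$. Using the twisting construction of Lemma \ref{notinttheliterature} together with centrality of $\mathbb{Q}_p(1)^n$ in $U$ (so that the twist of the kernel by any $U$-torsor is canonically isomorphic to itself), the fibers of $\Sel(U) \to J(K) \otimes \mathbb{Q}_p$ are either empty or torsors over the untwisted $H^1_f(G_T, \mathbb{Q}_p(1)^n)$. This last group vanishes by the identification $H^1_f(G_T, \mathbb{Q}_p(1)) \simeq \mathcal{O}_K^\times \otimes \mathbb{Q}_p$ recalled above, which is zero for $K = \mathbb{Q}$ or $K$ imaginary quadratic. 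Hence $\Sel(U)$ injects on $\mathbb{Q}_p$-points into $J(K) \otimes \mathbb{Q}_p$, giving the bound.

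For (ii), I would use the $p$-adic Hodge identification $H^1_f(G_{\mathfrak{p}}, U) \simeq D_{\dr}(U)/F^0$ for crystalline unipotent $U$ (due to Kim, following Olsson), under which the central extension corresponds to an exact sequence of filtered Lie algebras. Strictness of the Hodge filtration then yields
$$0 \to D_{\dr}(\mathbb{Q}_p(1)^n)/F^0 \to D_{\dr}(U)/F^0 \to D_{\dr}(V)/F^0 \to 0.$$
The left term has dimension $n$ (since $F^0 D_{\dr}(\mathbb{Q}_p(1)) = 0$) and the right term has dimension $g$ (since $V = H^1_{\acute{e}t}(\overline{X}, \mathbb{Q}_p(1))$ has Hodge-Tate weights $0$ and $-1$ each with multiplicity $g$, so $F^0 D_{\dr}(V)$ is $g$-dimensional), producing the stated value $g + n$.

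The main obstacle is the surjectivity $H^1_f(G_{\mathfrak{p}}, U) \twoheadrightarrow H^1_f(G_{\mathfrak{p}}, V)$ underlying (ii); a purely Galois-cohomological route would require controlling the connecting map to $H^2(G_{\mathfrak{p}}, \mathbb{Q}_p(1)^n)$ on the crystalline subspace, whereas the de Rham identification above makes surjectivity immediate from strictness of $F^0$.
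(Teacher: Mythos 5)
Your proof is correct and tracks the paper's own argument closely: part (i) uses the vanishing of $H^1_f(G_T,\mathbb{Q}_p(1))$ together with the exact sequence coming from the central extension, and part (ii) uses Kim/Olsson's crystalline-torsor identification $H^1_f(G_\mathfrak{p},U)\simeq D_{\dr}(U)/F^0$ and strictness of the Hodge filtration, exactly as in the paper. One small imprecision: Lemma \ref{notinttheliterature} is specifically about splitting $\Sel(U)$ according to the finitely many images in $\prod_{v\in T_0}j_v(X(K_v))$, not about the fibration of $\Sel(U)$ over its image in $J(K)\otimes\mathbb{Q}_p$; what you actually need (and what you correctly use) is the general twisting construction of \S\ref{twistingproperties} together with centrality of $[U,U]$ in $U$ to identify each nonempty fiber with (a subset of) $H^1_f(G_T,\mathbb{Q}_p(1)^n)=0$. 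Also, strictly speaking the fibers are subsets rather than torsors over this group once the extra Selmer local conditions at $T_0$ are imposed, but since the ambient group vanishes the conclusion is unaffected.
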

\begin{proof}
(i) By Lemma \ref{notinttheliterature}, it is enough to prove the dimension of $H^1 _{\mathcal{O}_K }(G_T ,U ^{(\alpha )})$ is bounded by $\rk J(K)$ for each $\alpha $ in a set of representatives for the image of $\Sel (U)$ in $\prod _{v \in T_0 }H^1 (G_v ,U)$. The action of $U$ on itself by conjugation induces a trivial action on $V$ and $[U,U]$, giving a Galois-equivariant short exact sequence
\[
1 \to [U,U] \to U^{(\alpha )} \to V \to 1,
\]
which induces an exact sequence of pointed varieties
$$
H^1 _f (G_T ,[U,U])\to H^1 _f (G_T ,U ^{(\alpha )})\to H^1 _f (G_T ,V).
$$
Since $[U,U]\simeq \Q _p (1)^n$, we may apply \eqref{eqn:coh_fin1} to deduce an inequality
$$
\dim \Sel (U) \leq \dim H^1 _f (G_T ,[U,U]) +\dim J(K)\otimes \Q _p =\rk J(K).
$$

(ii)
The computation of the dimension of $H^1 _f (G_{\mathfrak{p}},U)$ follows \cite[$\S 2$]{kim:chabauty}.
By $p$-adic Hodge theory, we have an isomorphism
$$
H^1 _f (G_{\mathfrak{p}},U)\simeq D_{\dR }(U)/F^0 ,
$$
and this gives a short exact sequence
$$
1\to D_{\dR }([U,U])/F^0 \to H^1 _f (G_{\mathfrak{p}},U)\to D_{\dR }(V)/F^0 \to 1.
$$
Since $[U,U]\simeq \mathbb{Q}_p (1)^n $, the dimension of $H^1 _f (G_{\mathfrak{p}},U)$ is $g+n$ by \eqref{eqn:coh_fin2}.
\end{proof}
We  deduce the following:
\begin{Lemma}\label{nondensityiseasy}
Suppose $X$ is a curve of genus $g$, such that $\rk J(K)<g+ \rho (J)-1$. Then $X(K_{\mathfrak{p}})_2$ is finite.
\end{Lemma}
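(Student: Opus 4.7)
The plan is to exhibit a quotient $U$ of $U_2$ large enough that the dimension count of Lemma \ref{nondensitylemma} forces non-density of $\loc_{\mathfrak{p}}$, and then to conclude finiteness of $X(K_{\mathfrak{p}})_U$ from the Zariski density of $j_{\mathfrak{p}}$. Set $n := \rk \NS(J) - 1$; my first goal is to construct a $G_T$-equivariant quotient $U$ of $U_2$ fitting in an extension
$$
0 \to \mathbb{Q}_p(1)^n \to U \to V \to 1.
$$
Such a $U$ corresponds to a $G_T$-equivariant surjection $U_2[2] \twoheadrightarrow \mathbb{Q}_p(1)^n$. By Faltings' semisimplicity theorem for Galois representations attached to abelian varieties, $\wedge^2 V$, and hence its quotient $U_2[2]$, is a semisimple $G_T$-module. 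The preceding paragraph identifies $\Hom_{G_T}(\mathbb{Q}_p(1), \wedge^2 V)$ with $\NS(J) \otimes \mathbb{Q}_p$ (via the Tate conjecture), so $\wedge^2 V$ contains $\rk \NS(J)$ independent $\mathbb{Q}_p(1)$-summands, one of which is the image of $\cup^*$ from (\ref{cup:exact}). Quotienting out this summand leaves $n = \rk \NS(J) - 1$ independent $\mathbb{Q}_p(1)$-summands in $U_2[2]$, yielding the desired surjection.

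With this $U$ in hand, Lemma \ref{nondensitylemma}(i) gives $\dim \Sel(U) \le \rk J(K)$ while (ii) gives $\dim H^1_f(G_{\mathfrak{p}}, U) = g + n = g + \rk \NS(J) - 1$. The hypothesis $\rk J(K) < g + \rk \NS(J) - 1$ therefore implies that $\loc_{\mathfrak{p}}(\Sel(U))$ is contained in a proper Zariski-closed subvariety of $H^1_f(G_{\mathfrak{p}}, U)$.

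To conclude I would invoke the standard Chabauty--Kim finiteness argument: $j_{\mathfrak{p}}$ has Zariski-dense image (by the result of Kim already cited in this section) and, on each of the finitely many residue disks of $X(K_{\mathfrak{p}})$, is given by a convergent $p$-adic power series. Any nonzero algebraic function on $H^1_f(G_{\mathfrak{p}}, U)$ vanishing on $\loc_{\mathfrak{p}}(\Sel(U))$ therefore pulls back to a nonzero $p$-adic analytic function on each disk, which has only finitely many zeros per disk. Hence $X(K_{\mathfrak{p}})_U = j_{\mathfrak{p}}^{-1}(\loc_{\mathfrak{p}}(\Sel(U)))$ is finite.

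The only genuinely substantive point is the construction of $U$ with $n = \rk \NS(J) - 1$, which rests on both the Tate conjecture for $H^2(\overline{J})$ and Faltings' semisimplicity. Everything else is a formal application of Lemma \ref{nondensitylemma} together with the Zariski-density reasoning already standard in the Chabauty--Kim framework.
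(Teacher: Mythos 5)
Your proposal is correct and matches the paper's (implicit) argument: deduce the lemma from Lemma \ref{nondensitylemma} by constructing a quotient $U$ of $U_2$ with $[U,U]\cong\mathbb{Q}_p(1)^n$ for $n=\rk\NS(J)-1$, using that $\Hom_{G_T}(\mathbb{Q}_p(1),\wedge^2 V)$ has dimension $\rk\NS(J)$ together with the short exact sequence (\ref{cup:exact}). Your invocation of Faltings' semisimplicity to split off the image of $\cup^*$ is a valid way to produce the needed surjection $U_2[2]\twoheadrightarrow\mathbb{Q}_p(1)^n$, though it is not strictly necessary; one may instead apply $\Hom_{G_T}(-,\mathbb{Q}_p(1))$ to (\ref{cup:exact}) and use left-exactness plus the nondegeneracy of the cup product pairing on $V$ to see directly that $\dim\Hom_{G_T}(U_2[2],\mathbb{Q}_p(1))\ge\rk\NS(J)-1$.
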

\begin{proof}
By Lemma \ref{nondensitylemma}, the problem of finiteness reduces to finding an appropriate quotient $U$ of $U_2 $.
Note that, since $H^2 _{\acute{e}t}(\overline{J},\Q _p )\simeq \wedge ^2 H^1 _{\acute{e}t}(\overline{X},\Q _p )$, by dualising we have $
\Hom _{G_T} (\wedge ^2 V ,\mathbb{Q}_p (1))\simeq \Hom _{G_T }(\mathbb{Q}_p ,H^2 _{\acute{e}t}(\overline{J},\mathbb{Q}_p (1)))$
and hence the rank of this vector space is at least $\rho(J)$.
Furthermore this is an equality, since $H^2 $ of an abelian variety satisfies the Tate conjecture \cite{faltings-finiteness}.  On the other hand by $\S 2$, the representation
$
U[2]$ is isomorphic to the cokernel of 
$\mathbb{Q}_p (1)\stackrel{\cup ^* }{\longrightarrow }\wedge ^2 V  $.
\end{proof}
\begin{Remark}\label{rmk:integral_points}
If $x$ is a rational point of $X$, $Y:=X-x$ and $b$ is an integral point of $\mathcal{Y}$, a minimal regular model of $Y$, then the same argument as in Lemma \ref{nondensityiseasy} shows that $\mathcal{Y}(\Z _p )_2 $ is finite whenever $\rk J(K)<g+\rho (J)$.
\end{Remark}

\section{Mixed extensions and Nekov\'a\v r's $p$-adic height function}\label{sec:mixedandnekovar}

In this section we introduce some notation for mixed extensions in an abelian category, discuss the relationship between mixed extensions and cohomology with values in unipotent groups, and then review Nekov\'a\v r's $p$-adic height function on mixed extensions.
\subsection{Mixed extensions}\label{subsection:mixed_ext}
Let $\mathcal{A}$ be an abelian category. Let $W_0 ,\ldots ,W_n $ be objects of $\mathcal{A}$, such that for all $i<j$, 
$
\Hom _{\mathcal{A}}(W_i ,W_j )=0.
$
\begin{Definition}
We define a \textit{mixed extension with graded pieces} $W_0 ,\ldots ,W_n$ to be a tuple 
$(M,(M_i ,\alpha _i ))$, where $M$ is an object of $\mathcal{A}$,
\begin{equation}\nonumber
M=M_0 \hookleftarrow M_1 \hookleftarrow M_2 \hookleftarrow \ldots \hookleftarrow M_{n+1} =0
\end{equation}
is a filtration in $\mathcal{A}$ and $\alpha _0 ,\ldots ,\alpha _n $ are
isomorphisms
$$
\alpha _i :M_{i} /M_{i+1} \simeq W_i .
$$
\end{Definition}
A mixed extension $(M,(M_i ,\alpha _i ))$ as above will sometimes be denoted simply by $M$.
\begin{Definition}
Let $(M,(M_i ,\alpha _i))$ and $(N,(N_i ,\beta _i ))$ be mixed extensions with graded pieces $W_0 ,\ldots ,W_n $. A 
\textit{morphism of mixed extensions} is a sequence of compatible isomorphisms
$$
r_i :M_i \stackrel{\simeq }{\longrightarrow }N_i
$$
such that if $r_i $ denotes the induced morphism $M_{i-1} /M_{i}\to N_{i-1} /N_{i}$, then for all $i$, $\beta _i \circ r_i =\alpha _i $.
\end{Definition}
We denote by $\mathcal{C}(\mathcal{A};W_0 ,\ldots ,W_n )$ the category of mixed extensions with graded pieces $W_0 ,\ldots ,W_n $, 
and by $C(\mathcal{A};W_0, \ldots ,W_n )$ the set of isomorphism classes.
Note that our assumption on $\Hom _{\mathcal{A}}(W_i ,W_j )$ implies that an object of $\mathcal{C}(\mathcal{A};W_0 ,\ldots ,W_n )$ 
has no nontrivial automorphisms.
For any $0\leq i<j\leq n$ we have a tautological functor
$$
\varphi _{i,j} :\mathcal{C}(\mathcal{A};W_0 ,\ldots ,W_n )\to \mathcal{C}(\mathcal{A};W_i ,\ldots ,W_j )
$$
which induces a map
$$
\varphi _{i,j}:C(\mathcal{A};W_0 ,\ldots ,W_n )\to C(\mathcal{A};W_i ,\ldots ,W_j ).
$$
\begin{Remark}
The reason for the term ``mixed extension'' is as follows: if $n=2$ and $M$ is an object in $\mathcal{C}(\mathcal{A};W_0 ,W_1 ,W_2 )$, then in the notation of \cite[IX.9.3]{grothendieck288groupes}, $M$ is a \textit{mixed extension} of $\varphi _{0,1}(M)$ and $\varphi _{1,2}(M)$.
\end{Remark}
In the case $n=1$, we have an isomorphism
$$
C(\mathcal{A};W_0 ,W_1 )\simeq \ext ^1 (W_0 ,W_1 ),
$$
and in particular we can add mixed extensions with two graded pieces.
For general $n$, if $M$ and $N$ are objects in $\mathcal{C}(\mathcal{A};W_0 ,\ldots ,W_n )$ such that 
$\varphi _{1,n-1}(M)\simeq \varphi _{1,n-1}(N)$, then the Baer sum of $M$ and $N$, denoted $M+_{1,n-1}N$, will again be an object in 
$\mathcal{C}(\mathcal{A};W_0 ,\ldots ,W_n )$. Similarly, if $\varphi _{2,n}(M)\simeq \varphi _{2,n}(N)$, then we can form $M+_{2,n}N$.
\begin{Definition}
Let $A$ be an abelian group. A function
$$
\alpha :C(\mathcal{A};W_0 ,\ldots ,W_n )\to A
$$
is said to be \textit{bi-additive} if, whenever $\varphi _{1,n-1}(M)=\varphi _{1,n-1}(N)$, we have 
$$
\alpha (M+_{1,n-1}N)=\alpha (M)+\alpha (N),
$$
and whenever $\varphi _{2,n}(M)=\varphi _{2,n}(N)$, we have 
$$
\alpha (M+_{2,n}N)=\alpha (M)+\alpha (N).
$$
\end{Definition}
\subsection{Relation to nonabelian cohomology}
Now suppose that $\mathcal{A}=\rep _{\mathbb{Q}_p }(G)$ is the category of continuous $p$-adic representations of a profinite group $G$. Let $W_0 ,\ldots ,W_n $
be objects in $\rep _{\mathbb{Q}_p }(G)$ with the property that for all $i<j$, \\ $\Hom _{G}(W_i ,W_j )=0$. 
\begin{Definition}
Define $U(W_0 ,\ldots ,W_n )$ to be the unipotent subgroup of $\Aut (\oplus _{0\leq i \leq n}W_i) $ consisting of homomorphisms whose $\Hom (W_i ,W_j )$-component is zero if $i>j$ and the identity endomorphism if $i=j$. 
\end{Definition}
Note that here $\Aut (\oplus _{0\leq i\leq n}W_i )$ refers to the group of automorphisms of vector spaces (i.e. not necessarily $G$-equivariant). The group $\Aut (\oplus _{0\leq i \leq n}W_i )$ has a continuous $G$-action (the restriction of the $G$-action on $\Hom  
(\oplus _{0\leq i \leq n}W_i )$. In this way, $U(W_0 ,\ldots ,W_n )$ inherits a continuous $G$-action.

\begin{Definition}
Let $(M,(M_i ,\alpha _i ))$ be an object in $\mathcal{C}(\rep _{\mathbb{Q}_p }(G);W_0 ,\ldots ,W_n )$. Define $\Phi (M)$ to be the set 
of isomorphisms of vector spaces 
$$
\rho :M\stackrel{\simeq }{\longrightarrow }W_0 \oplus \cdots \oplus W_n 
$$
such that $\rho (M_i )=W_i \oplus \cdots \oplus W_n $ and the induced quotient homomorphism
$$
\rho _i :M_i /M_{i+1}\to W_i
$$
is equal to $\alpha _i $.
\end{Definition}
$\Phi (M)$ has the structure of a $G$-equivariant $U(W_0 ,\ldots ,W_n )$ torsor, and this induces a map
$$
\Phi :C(\rep _{\mathbb{Q}_p }(G);W_0 ,\ldots ,W_n )\to H^1 (G,U(W_0 ,\ldots ,W_n )).
$$
\begin{Lemma}
$\Phi $ is a bijection.
\end{Lemma}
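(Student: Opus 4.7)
The cleanest approach is to exhibit both sides of $\Phi$ as classifying twisted forms of the split mixed extension $W := W_0 \oplus \cdots \oplus W_n$, equipped with its tautological descending filtration $F_i W := W_i \oplus \cdots \oplus W_n$ and the identity identifications on graded pieces. The first step is to unwind the definition of $U := U(W_0,\ldots,W_n)$ to see that it is precisely the group of $\mathbb{Q}_p$-linear automorphisms of $W$ preserving $F_\bullet W$ and restricting to the identity on each graded piece; the summands $W_i^\ast \otimes W_j \subset \Hom(W_i, W_j)$ for $i<j$ contribute the strictly upper-triangular part, while the diagonal condition enforces the identity on graded pieces. With this identification, the $U$-action on $\Phi(M)$ by postcomposition is a well-defined $G$-equivariant $U$-torsor structure: the torsor is nonempty because any filtered $\mathbb{Q}_p$-vector space admits a splitting, and $G$-equivariance is automatic since $M_\bullet$ and the $\alpha_i$ are $G$-stable.

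For injectivity, an isomorphism of $G$-equivariant $U$-torsors $\Phi(M) \xrightarrow{\sim} \Phi(N)$ together with a choice of $\rho \in \Phi(M)$ and its image $\sigma \in \Phi(N)$ produces a $G$-equivariant vector space isomorphism $\sigma^{-1}\circ\rho \colon M \to N$, and by the defining properties of $\Phi(M)$ and $\Phi(N)$ it sends $M_i$ to $N_i$ and intertwines $\alpha_i$ with $\beta_i$ on graded pieces. This is exactly a morphism of mixed extensions. For surjectivity, given a $G$-equivariant $U$-torsor $P$, I would form the twist $M := W \times_U P$ in the notation of $\S\ref{twistingproperties}$; because $U$ preserves $F_\bullet W$ and acts trivially on each graded piece, $M$ inherits a $G$-stable filtration $M_i := F_iW \times_U P$ together with canonical isomorphisms $\alpha_i \colon M_i/M_{i+1} \xrightarrow{\sim} W_i$, giving a mixed extension whose associated torsor is identified with $P$ via the identity section of $W \times_U P \to P \backslash W$.

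I do not expect a genuine obstacle here: the lemma is a formal instance of the equivalence between torsors under the automorphism group of a structured object and twisted forms of the object itself, applied in the category of continuous $G$-representations. The only point requiring explicit verification is that $\Phi$ and the associated-bundle construction are mutually inverse, which reduces to standard compatibilities of the twisting construction as recalled in $\S\ref{twistingproperties}$.
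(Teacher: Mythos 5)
Your proposal takes essentially the same approach as the paper's proof: the key observation in both is that the inverse to $\Phi$ is given by twisting the split object $W_0 \oplus \cdots \oplus W_n$ by the torsor $P$, i.e., $P \mapsto W \times_U P$. The paper records only this one-line construction of the inverse, whereas you additionally spell out well-definedness of $\Phi$ and the injectivity/surjectivity verifications; this is a more detailed exposition of the same argument rather than a genuinely different route.
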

\begin{proof}
To construct an inverse to $\Phi $, define $\Phi '$ to be the functor from the category of equivalence classes of $G$-equivariant $U$-torsors
to $\mathcal{C}(\rep _{\mathbb{Q}_p }(G);W_0 ,\ldots ,W_n )$ sending a torsor $P$ to the twist of $W_0 \oplus \ldots \oplus W_n $ by $P$.
\end{proof}

Under the correspondence, when $G=G_{\mathfrak{p}}$, the subcategory of crystalline $G_{\mathfrak{p}}$-representations is sent to $H^1 _f (G_{\mathfrak{p}},U(W_0 ,\ldots ,W_n ))$, and similarly for semistable representations.
Define 
$$
H^1 _{\st }(G_T ,U(W_0 ,\ldots ,W_n ))\subset H^1 (G_T ,U(W_0 ,\ldots ,W_n ))
$$ 
to be the subvariety of $U$-torsors which are semistable at all primes above $p$ (with no conditions at the primes in $T_0 $). 
We will henceforth use $C(\rep _{\mathbb{Q}_p } (G);W_0 ,\ldots ,W_n )$ and $H^1 (G,U(W_0 ,\ldots ,W_n ))$ interchangeably.
Note that, by our assumption that $\Hom _G (W_i ,W_j )=0$ for all $i<j$, we have $H^0 (G,U(W_0 ,\ldots ,W_n ))=0$, and hence $H^1 (G,U(W_0 ,\ldots ,W_n ))$ is represented by an affine scheme of finite type over $\Q _p $ by \cite[Proposition 2]{kim:siegel}. In particular, we use this to view $C(\rep _{\mathbb{Q}_p }(G);W_0 ,\ldots ,W_n )$, and its various decorated versions, as the $\Q _p $-points of an algebraic variety.

\subsection{Nekov\'a\v r's $p$-adic height pairing on mixed extensions}
In this section we recall the construction of Nekov\'a\v r's $p$-adic height pairing \cite{Nek93}. We will only work in the context of a smooth projective curve over $K$ having good reduction at all primes above $p$.  Our categories will be $G$-representations (for $G=G_{T}$ or $G_v$), and our objects will be $W_0 =\mathbb{Q}_p $,$W_1 =V$,$W_2 =\mathbb{Q}_p(1)$. The group $U(\Q _p ,V,\Q _p (1))$ is a central extension
\[
1 \to \Q_p (1) \to U(\Q _p ,V,\Q _p (1)) \to V \oplus \Hom (V,\Q_p (1)) \to 1.
\]
This induces an action of $H^1 _{\st }(G_T ,\Q _p (1))$ on $H^1 _{\st }(G_T ,U(\Q _p ,V,\Q _p (1)))$, and an exact sequence 
\[
1 \to H^1 _{\st }(G_T ,\Q _p (1))\to H^1 _{\st }(G_T ,U(\Q _p ,V,\Q_p (1)))\to H^1 _{\st }(G_T ,V\oplus \Hom (V,\Q _p (1)) \to 1.
\]
In particular, this gives an isomorphism
\[
H^1 _{\st }(G_T ,U(\Q _p ,V,\Q _p (1)) /H^1 _{\st }(G_T ,\Q _p (1)) \stackrel{\simeq }{\longrightarrow } H^1 _{\st }(G_T ,V) \oplus H^1 _{\st }(G_T , \Hom (V,\Q _p (1))).
\]

The variety 
$C(\rep _{\mathbb{Q}_p }(G) ;\mathbb{Q}_p ,V,\mathbb{Q}_p (1))$ has a natural involution defined by 
$$
M\mapsto M^* (1).
$$
We say a function
$$
\alpha :C(\rep _{\mathbb{Q}_p }(G) ;\mathbb{Q}_p ,V,\mathbb{Q}_p (1))\to \mathbb{Q}_p 
$$
is \textit{symmetric} if $\alpha (M)=\alpha (M^* (1))$. Nekov\'a\v r's $p$-adic height pairing is defined via a family of local height functions 
$$
h_v :H^1 (G_v ,U(\mathbb{Q}_p ,V,\mathbb{Q}_p (1))) \to \mathbb{Q}_p ,
$$
for $v$ prime to $p$, and 
$$
h_v :H^1 _{\st } (G_v ,U(\mathbb{Q}_p ,V,\mathbb{Q}_p (1)))\to \mathbb{Q}_p 
$$
for $v$ above $p$, which are continuous, bi-additive and symmetric.
The input for Nekov\'a\v r's construction is a class $\chi $ in $H^1 (G_{T},\mathbb{Q}_p )$ and 
a splitting 
\begin{equation}\label{split} s:H^1 _{\dR } (X_{K_v },\mathbb{Q}_p )\to F^1 H^1 _{\dR } (X_{K_v },\mathbb{Q}_p )\end{equation}
of the Hodge filtration of $H^1 _{\dR } (X_{K_v}  ,\mathbb{Q}_p )$ at every prime $v$ above $p$. We will restrict attention to splittings $s$ for which $\Ker (s)$ is an isotropic subspace with respect to the Hodge filtration. For such splittings, the local height is symmetric in the sense that $h_p (M)=h_p(M^* (1))$ (see \cite[$\S 4.11$]{Nek93}).

\subsubsection{$v$ prime to $p$}
For $v$ not above $p$, the construction of local height pairings is immediate given the weight-monodromy conjecture for curves \cite{raynaud19941}, which implies that 
$$
H^0 (G_v ,V)=H^1 (G_v ,V)=0,
$$
and hence by the six-term exact sequence in nonabelian cohomology,
$$
H^1 (G_v ,U(\mathbb{Q}_p ,V,\mathbb{Q}_p (1)))\simeq H^1 (G_v ,\mathbb{Q}_p (1)).
$$
This gives a function
$$
.\cup \chi _v :H^1 (G_v ,U(\mathbb{Q}_p ,V,\mathbb{Q}_p (1)))\to \mathbb{Q}_p
$$
via the isomorphism $H^2 (G_v ,\mathbb{Q}_p (1))\simeq \mathbb{Q}_p $ coming from local class field theory.
\subsubsection{$v$ above $p$}\label{subsec:algebraic}
For $v$ above $p$, the construction of local height pairings uses $p$-adic Hodge theory. As we will only be interested in the crystalline case, we restrict attention to 
describing Nekov\'a\v r's functional on crystalline mixed extensions
$$
h_v :H^1 _f (G_v ,U(\mathbb{Q}_p ,V,\mathbb{Q}_p (1)))\to \mathbb{Q}_p .
$$ 
The construction is analogous to the case when $v$ was prime to $p$: given a mixed extension $M$ in the category of filtered $\phi$-modules, 
with graded pieces $\mathbb{Q}_p ,D_{\cry}(V)$ and $D_{\cry}(\Q _p (1))$, one constructs an extension $c$ of $\mathbb{Q}_p$ by $D_{\cry}(\Q _p (1))$, 
identifies this as an element $c'$ of $H^1 _f (G_p ,\mathbb{Q}_p (1))$, and then defines 
$$
h(M):=c' \cup \chi _v .
$$
We now sketch the construction of $c$. Note that (in the category of admissible filtered $\phi$-modules) $\ext ^1(\mathbb{Q}_p ,D_{\cry}(\Q _p (1)))\simeq D_{\dR }(\Q _p (1))$, so one may 
equivalently think of $c$ as an element of $D_{\dR}(\mathbb{Q}_p (1))$.
Let $(M,(M_i ,\alpha _i ))$ be a mixed extension with graded pieces $\mathbb{Q}_p ,D_{\cry }(V)$ and $D_{\cry }(\mathbb{Q}_p (1))$. 
The extension class of $M$ in $\ext ^1 (\mathbb{Q}_p ,M_1 )$ defines an element of $M_1 /F^0 $. Using the splitting $s$ specified in \eqref{split}, one 
lifts this to an element of $M_1 $.
For weight reasons there is a 
canonical $\phi $-equivariant splitting of the inclusion $M_2 \hookrightarrow M_1 $,
and hence via $\alpha _2 $ one obtains an element $c$ of $D_{\dR}(\mathbb{Q}_p (1))$, as required.

In the language of \cite{kim:chabauty} we may define the local height of a crystalline mixed extension as follows.
There is an isomorphism \cite[$\S 2$]{kim:chabauty}:
$$
H^1 _f (G_v ,U(\mathbb{Q}_p ,V,\mathbb{Q}_p (1)))\simeq D_{\dR }(U(\mathbb{Q}_p ,V,\mathbb{Q}_p (1)))/F^0 .
$$
Let $V^{\dR }=D_{\cry }(V)$ and $D_{\cry }(1):=D_{\cry }(\mathbb{Q}_p (1))$. As for $G$-representations, we define a unipotent group $U(\mathbb{Q}_p ,V^{\dR },D_{\cry }(1))$ 
with filtration and $\phi$-action. This is then isomorphic (as a group with filtration and $\phi$-action) to 
$D_{\cry }(U(\mathbb{Q}_p ,V,\mathbb{Q}_p (1)))$.
The homogeneous space $U(\mathbb{Q}_p ,V^{\dR },D_{\cry }(1))/F^0 $ parametrises 
mixed extensions with graded pieces $\mathbb{Q}_p ,V^{\dR }$ and $D_{\cry }(1)$ in the category 
of filtered $\phi $-modules. 
Arguing as above, a splitting of the Hodge filtration determines an algebraic function
$$
U(\mathbb{Q}_p ,V^{\dR },D_{\cry }(1))/F^0 \to D_{\cry }(1).
$$
In particular, we obtain the following lemma.
\begin{Lemma}
The local height function 
$$
h_v :H^1 _f (G_v ,U(\mathbb{Q}_p ,V,\mathbb{Q}_p (1)))\to \mathbb{Q}_p 
$$
is algebraic.
\end{Lemma}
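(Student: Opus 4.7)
The plan is to transfer the problem to the de Rham side via $p$-adic Hodge theory, where everything becomes linear algebra on an affine moduli space, and then to verify that each of the three operations in the construction of the local height is a polynomial function of the defining coordinates.

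First I would invoke the algebraic isomorphism
$$H^1_f(G_v, U(\mathbb{Q}_p, V, \mathbb{Q}_p(1))) \simeq U(\mathbb{Q}_p, D_{\dr}(V), D_{\dr}(\mathbb{Q}_p(1)))/F^0$$
recalled just above. The target is by definition the affine variety whose points are mixed extensions of filtered $\phi$-modules with prescribed graded pieces $\mathbb{Q}_p$, $D_{\cry}(V)$, $D_{\cry}(\mathbb{Q}_p(1))$; after choosing bases compatible with the filtrations, its coordinate functions are precisely the matrix entries of the Frobenius and the Hodge flag of such a mixed extension $M$. It therefore suffices to show that the construction $M \mapsto c(M) \in D_{\dr}(\mathbb{Q}_p(1))$ recalled in \S\ref{subsec:algebraic} is polynomial in these coordinates, since the final cup product $c \cup \chi_v$ is a fixed $\mathbb{Q}_p$-linear functional that preserves algebraicity.

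Next I would track the three operations that produce $c(M)$. (a) The extension class of $M$ in $\ext^1(\mathbb{Q}_p, M_1) \simeq M_1/F^0$ is literally a coordinate of the mixed extension. (b) The splitting $s$ of the Hodge filtration on $D_{\dr}(V)$ is fixed once and for all, and lifting the extension class to an element of $M_1$ is a $\mathbb{Q}_p$-linear operation whose output depends polynomially on the Hodge filtration data entering $M$. (c) The canonical $\phi$-equivariant splitting of $M_2 \hookrightarrow M_1$ exists because the spectra of $\phi$ on the two graded pieces $D_{\cry}(V)$ and $D_{\cry}(\mathbb{Q}_p(1))$ are disjoint (the ``weight reasons'' cited in the paper): those on $D_{\cry}(V)$ are Weil numbers associated to the good reduction of $X$ at $v$, while $\phi$ acts on $D_{\cry}(\mathbb{Q}_p(1))$ by $p^{-1}$. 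Consequently the splitting is realised by a single polynomial $P(\phi_{M_1})$ chosen to annihilate the spectrum on $D_{\cry}(V)$ and to act as the identity on the spectrum on $D_{\cry}(\mathbb{Q}_p(1))$.

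Composing the three steps gives the desired polynomial expression for $c(M)$. The main subtlety lies in (c): one must ensure that a single polynomial $P \in \mathbb{Q}_p[T]$ works uniformly as $M$ varies in the moduli space. This is indeed the case because only the off-diagonal entries of $\phi_M$ vary — the diagonal blocks, and hence the Frobenius spectra on the two graded pieces, are fixed invariants of the moduli problem. Given this, the matrix entries of $P(\phi_{M_1})$, and hence the coordinates of $c(M)$, are polynomials in the coordinates of $M$, proving the claim.
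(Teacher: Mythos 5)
Your proposal follows the paper's own (tersely stated) route: transfer via $p$-adic Hodge theory to the filtered $\phi$-module side, where the domain becomes the affine variety $U(\mathbb{Q}_p, D_{\dr}(V), D_{\dr}(\mathbb{Q}_p(1)))/F^0$, and then observe that each of the three steps in Nekov\'a\v r's construction of $c(M)$, followed by the fixed linear functional $c \mapsto c' \cup \chi_v$, is algebraic. Your step (c) --- realizing the $\phi$-equivariant retraction as a single polynomial $P(\phi_{M_1})$ via disjoint spectra --- is the useful extra detail the paper elides; it can even be simplified by noting that the same disjoint-spectra argument splits the Frobenius on all of $D_{\cry}(M)$ at once (the unit-root splitting underlying the isomorphism $H^1_f \simeq D_{\dr}(U)/F^0$), so in the natural coordinates the Frobenius matrix is constant and only the Hodge flag varies.
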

\begin{Remark}\label{Rmk:different_splittings}
If $s_1 $ and $s_2 $ are two splittings of the Hodge filtration, giving associated height functions $h_{v,1}$ and $h_{v,2}$, then their difference defines a bilinear map
$$
V_{\dR} /F^0 \times V_{\dR} /F^0 \to D_{\cry }(1).
$$
\end{Remark}
\subsubsection{Global heights}\label{globalheights}
We define 
$$
h:H^1 _{\mathcal{\st }}(G_T ,U(\mathbb{Q}_p ,V,\mathbb{Q}_p (1)))\to \mathbb{Q}_p
$$
to be the composite of 
\small{$$H^1 _{\mathcal{\st }}(G_T ,U(\mathbb{Q}_p ,V,\mathbb{Q}_p (1))) \xrightarrow{\prod_{v\in T}\loc _v } \prod _{v\in T_0 }H^1 (G_v ,U(\mathbb{Q}_p ,V,\mathbb{Q}_p (1)))\times 
\prod _{v|p }H^1 _{\st }(G_v ,U(\mathbb{Q}_p ,V,\mathbb{Q}_p (1)))$$}
\normalsize
with 
$$
\prod _{v\in T_0 }H^1 (G_v ,U(\mathbb{Q}_p ,V,\mathbb{Q}_p (1)))\times 
\prod _{v|p }H^1 _{\st }(G_v ,U(\mathbb{Q}_p ,V,\mathbb{Q}_p (1))) \xrightarrow{\sum h_v} \mathbb{Q}_p.
$$
{The function $h$ is invariant under the action of $H^1 _{\st }(G_T ,\Q _p (1))$ on $H^1 _{\st }(G_T ,U(\Q _p ,V,\Q _p (1)))$: for all $c$ in $H^1 _{\st }(G_T ,U(\Q_p ,V,\Q_p (1)))$ and $d\in H^1 _{\st }(G_T ,\Q _p (1))$, and all primes $v$ in $T$, we have 
\[
h _v (c+d)=h_v (c)+\loc _v (\chi \cup d),
\]
hence $h(c)=h(c+d)$ by class field theory.  We have
$$
\varphi _{0,1}\times \varphi _{1,2}:H^1 _{\mathcal{\st }}(G_T ,U(\mathbb{Q}_p ,V,\mathbb{Q}_p (1)))\to H^1 _f (G_T ,V)\times H^1 _f (G_T ,V),
$$
using the fact that $H^1 _{\st }(G_T ,V)\simeq H^1 _f (G_T ,V)$. By additivity and continuity, it hence factors through
\begin{align*}
H^1 _{\st }(G_T ,U(\mathbb{Q}_p ,V,\mathbb{Q}_p (1)))&\to H^1 _f (G_T ,V)^{\otimes 2}\\
M\qquad\qquad &\mapsto \varphi _{0,1}(M)\otimes (\varphi _{1,2}(M)^* (1)).\end{align*}
If $s$ is chosen to be isotropic with respect to the cup product, the function $h$ is furthermore symmetric, i.e. 
$h(M)=h(M^* (1))$ \cite[$\S 4.11$]{Nek93}.

\section{Selmer varieties and mixed extensions}\label{sec:pathsandlinear}

We now return to Selmer varieties. Here $U$ will be an extension of $V$ by $\mathbb{Q}_p (1)$.
To obtain equations for $X(K_{\mathfrak{p}} )_U $, we use Nekov\'a\v r's construction to define a map
$$
\Sel (U)\to \mathbb{Q}_p .
$$
A natural analogue of Nekov\'a\v r's construction is to start with the input of a cohomology class $\chi $ in $H^1 (G_T ,\mathbb{Q}_p )$, and to define, 
at all primes $v$ in $T_0$, an algebraic function
$$
H^1 _* (G_v ,U)\to \mathbb{Q}_p 
$$
which, restricted to $H^1 (G_v ,\mathbb{Q}_p (1))$, is simply the cup product with $\chi $. 

Given a splitting of the Hodge filtration, one may define such a function, but in order to determine equations for Selmer varieties, it is better to have a construction with some kind of linearity properties analogous to those of the global height pairing. For this reason, in this section we define a way to embed
$\Sel (U)$ into $H^1 _{\mathcal{\st }}(G_T ,U(\mathbb{Q}_p ,V,\mathbb{Q}_p (1))$ via twisting. We then apply Nekov\'a\v r's construction, giving (via composition) local functions $\Sel (U)\to \mathbb{Q}_p$.
Note that if $\mathbb{Q}_p (1)$ is replaced by a different Galois representation $W$  of motivic weight $-2$ arising in $U[2]$, one may mimic Nekov\'a\v r's construction with the cohomology class $\chi $ replaced by a cohomology class in $H^1 (G_T ,W^* (1))$ which is nontrivial and noncrystalline at $\mathfrak{p}$, assuming one can prove such a class exists. This is developed in the sequel to this paper \cite{balakrishnan2017quadratic}.

\subsection{Twisting the enveloping algebra}\label{subsec:algebraic2}
To construct a mixed extension associated to an element of $H^1 (G,U)$, we define a $G$-representation with an equivariant $U$-module structure, which will be denoted $A(b)$, and then send a $U$-torsor $P$ to the twist of $A(b)$ by $P$.

$A(b)$ will be defined to be a certain finite-dimensional quotient of the universal enveloping algebra of $\pi _1 ^{\acute{e}t,\mathbb{Q}_p }(\overline{X},b)$.  By the theory of Maltsev completion, this has a very concrete description, which we now recall (see \cite[\S 2]{kim:coates}).

\begin{Definition}Let 
\[
\Z _p [\! [\pi _1 ^{\acute{e}t, (p)}(\overline{X},b)]\! ]:= \varprojlim \Z _p [\pi _1 ^{\acute{e}t}(\overline{X},b)/N]
\]
denote the inverse limit of the group algebras of quotients $\pi _1 ^{\acute{e}t}(\overline{X},b)/N$ of $p$-power order.
Let $I$ denote the kernel of the natural map
\[
\Z _p [\! [\pi _1 ^{\acute{e}t,(p)}(\overline{X},b)]\! ]\to \Z _p .
\]
Then we define $A_n (b):=\mathbb{Q}_p \otimes \Z _p [\! [\pi _1 ^{\acute{e}t,(p)}(\overline{X},b)]\! ]/I^{n+1}$.
\end{Definition}
$A_n (b)$ is equipped with the structure of a Galois-equivariant $\pi _1 ^{\acute{e}t}(\overline{X},b)$-module, via the action of $\pi _1 ^{\acute{e}t}(\overline{X},b)$ on $\Z _p [\! [\pi _1 ^{\acute{e}t,(p)}(\overline{X},b)]\! ]$. Hence for any Galois-equivariant $\pi _1 ^{\acute{e}t}(\overline{X},b)$-torsor $P$, we can twist $A_n (b)$ by $P$ to get a Galois representation $A_n (b)^{(P)}$. When $P=\pi _1 ^{\acute{e}t}(\overline{X};b,z)$, we may identify $A_n (b) ^{(P)}$ with the Galois-equivariant $A_n (b)$-module $A_n (b,z)$ obtained by tensoring $\mathbb{Q}_p [\pi _1 ^{\acute{e}t}(\overline{X};b,z)]$, thought of as a $\mathbb{Q}_p [\pi _1 ^{\acute{e}t}(\overline{X},b)]$-module, with $A_n (b)$. It follows from the theory of Maltsev completion that the action of 
$\pi _1 ^{\acute{e}t}(\overline{X},b)$ on $A_n (b)$ factors through the homomorphism 
$$
\pi _1 ^{\acute{e}t}(\overline{X},b)\to U_n (b).
$$
Furthermore, $A_n (b)$ is a quotient of the enveloping algebra of $U_n (b)$ and a faithful representation of $U_n (b)$. More generally 
we can view the $\mathbb{Q}_p$-vector space generated by the torsor of paths from $b$ to $z$, denoted
$\mathbb{Q}_p [\pi _1 ^{\acute{e}t}(\overline{X};b,z)]$, as a $G$-equivariant free rank $1$ module over 
$\mathbb{Q}_p [\pi _1 ^{\acute{e}t}(\overline{X},b)]$. Hence we may make the following definition.
\begin{Definition}
Let
$A_n (b,z)$  be the $G$-equivariant free rank 1 $A_n (b)$-module
$$
\mathbb{Q}_p [\pi _1 ^{\acute{e}t}(\overline{X};b,z)]\times _{\mathbb{Q}_p [\pi _1 ^{\acute{e}t}(\overline{X},b)]} A_n (b).
$$
\end{Definition}
Note that $A_n (b,z)$ is naturally equipped with a $G$-stable filtration
$$
A_n (b,z)\supset IA_n (b,z)\supset \ldots \supset I^{n+1}A_n (b,z)=0
$$ 
coming from the $I$-adic filtration on 
$\mathbb{Q}_p [\pi _1 ^{\acute{e}t}(\overline{X};b,z)]$, and that the action of $A_n (b)$ respects this filtration.
We define 
$$
A[k]:=I^k A_n (b)/I^{k+1}A_n (b).
$$
A second viewpoint is that $A_n (b,z)$ is the twist of $A_n (b)$ by $[\pi _1 ^{\acute{e}t}(\overline{X};b,z)]$ via the left action of 
$\pi _1 ^{\acute{e}t}(\overline{X},b)$ on $A_n (b)$.
There is also a more general construction: for all $k$, $I^k A_n (b)$ admits compatible actions of $U_n (b)$ and 
$G$. Hence for any $G$-equivariant $U_n (b)$-torsor $P$, we may construct the twist $A_n (b)^{(P)}$ of $A_n (b)$ by $P$. In the case when $P$ is $\pi _1 ^{\acute{e}t}(\overline{X};b,z)\times _{\pi _1 ^{\acute{e}t}(\overline{X},b)}U_n (b)$, we have that $A_n ^{(P)}$ is 
just $A_n (b,z)$. The action of $U_n $ on $I^k /I^{k+1}$ is trivial, hence for any such $P$ we have an isomorphism
$$
I^k A_n (b)^{(P)}/I^{k+1}A_n (b)^{(P)}\simeq I^k A_n (b)/I^{k+1}A_n (b).
$$
Thus we obtain a well-defined map
\begin{align*}
[\; . \;]:  H^1 (G,U_n ) &\to H^1 (G,U(A[0],A[1],\ldots ,A[n])) \\
  P  &\mapsto [A_n (b)^{(P)}].\end{align*} 
An equivalent definition of this map would be to define $\aut (A_n (b))$ to denote the group of unipotent automorphisms of 
$A_n (b)$ as a filtered vector space (i.e. automorphisms of $A_n (b)$ which respect the filtration and are the identity on 
the associated graded). Then there is a group homomorphism
$$
U_n (b)\to \aut (A_n (b))
$$
and an induced map on cohomology
$$
H^1 (G,U_n ) \to H^1 (G,\aut (A_n (b))).
$$
There is also an isomorphism
$$
H^1 (G,\aut (A_n (b))) \to H^1 (G,U(\mathbb{Q}_p ,A[1],\ldots ,A[n]))
$$
coming from the $G$-equivariant $(\aut (A_n (b)),U(\mathbb{Q}_p ,A[1],\ldots ,A[n]))$-bitorsor of isomorphisms of filtered 
vector spaces 
$$
A_n (b)\stackrel{\simeq }{\longrightarrow }\oplus _{k=0}^n A[k],
$$
see \cite[Proposition 35]{serre:gc}. The map $[\; . \;]$ defined above is simply the composite.

We now focus on the depth 2 case. There is a short exact sequence 
$$
0\to A[2]\to A_2 (b)\to A_1 (b)\to 0
$$
compatible with the action of $G$ and $U$. We have that $A[2]$ is canonically isomorphic to $[U_2 ,U_2 ]\oplus \Sym ^2 V$.
\begin{Definition}
Suppose that $\rho(J) > 1$. Let
$$
\xi :A[2]\to \mathbb{Q}_p (1)
$$
be a Galois-equivariant surjection whose restriction to $[U_2 ,U_2 ] \simeq \Coker(\wedge ^2 V \stackrel{\cup ^* }{\longrightarrow} \Q _p (1))$ is nonzero and factors through 
$[U_2 ,U_2 ]\to [U,U]$. Define $A(b)$ to be the mixed extension with graded pieces $\mathbb{Q}_p ,V,$ and $\mathbb{Q}_p (1)$ 
obtained by pushing out $A[2]\hookrightarrow A_2 (b)$ by $\xi :A[2]\to \mathbb{Q}_p (1)$. We define $IA(b)$ to be the kernel of 
the projection
$
A(b)\twoheadrightarrow \mathbb{Q}_p .
$
\end{Definition}
The representation $A(b)$ has a compatible $U$-action, and hence for any $U$-torsor $P$ we obtain a mixed extension $A(b)^{(P)}$ 
with graded pieces $\mathbb{Q}_p ,V$, and $\mathbb{Q}_p (1)$. 
Since the projection map $A(b)\to \mathbb{Q}_p $ and the inclusion map $\mathbb{Q}_p (1)\to A(b)$ are $U$-equivariant, for any $P$ we have exact sequences 
$$
0\to IA(b)^{(P)}\to A(b)^{(P)}\to \mathbb{Q}_p \to 0
$$
and
$$
0\to \mathbb{Q}_p (1)\to A(b)^{(P)}\to A_1 (b)^{(P)}\to 0.
$$
When $P=P(b,z)$ we denote $A(b)^{(P)}$ by $A(b,z)$ and $IA(b)^{(P)}$ by $IA(b,z)$. When we want to emphasise the dependence on $X$, we write $A(X)(b)$ and $A(X)(b,z)$.
By our assumptions on the homomorphism $A[2]\to \mathbb{Q}_p (1)$, $A(b)$ is a faithful $U$-representation. Note that since the $U$-action on $A[2]$ is trivial, we could define
$A(b)^{(P)}$ to be the pushout of $A[2]\hookrightarrow A_2 (b)^{(P)}$ by $A[2]\to \mathbb{Q}_p (1)$.
As in the above discussion of the map $[\; . \;]$, the map from $H^1 (G,U)$ to $H^1 (G,U(\mathbb{Q}_p ,V,\mathbb{Q}_p (1)))$ is algebraic.

\subsection{Description of $h(A(b,z))$}
Let $U$ be a quotient of $U_2 $ which is an extension of $V$ by $\mathbb{Q}_p (1)$. As explained in Section \ref{sec:nond}, $U$ corresponds to a Tate class
\[
Z:\Q _p \hookrightarrow \wedge ^2 H^1 _{\acute{e}t}(\overline{X},\Q _p (1))
\]
lying in the kernel of the cup product map. Let $A(b)$ be the corresponding quotient of the enveloping algebra 
of $U$. We now consider the maps
\begin{align*}
H^1 (G_v ,U)\to \mathbb{Q}_p ; & \quad P\mapsto h _v (A(b)^{(P)}) \\
H^1 (G_T ,U)\to \mathbb{Q}_p ; & \quad P \mapsto h (A(b)^{(P)}).
\end{align*}
The following lemma follows from the work of Kim and Tamagawa \cite{kim2008component}.
\begin{Lemma}
Let $v$ be a prime of $K$ that is coprime to $p$. Then the map
$$
X(K_v ) \to \mathbb{Q}_p; \quad
z\mapsto h_v (A(b,z))$$
is identically zero when $v$ is a prime of potential good reduction and 
has finite image in general. \end{Lemma}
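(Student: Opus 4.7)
The plan is to factor the map $z \mapsto h_v(A(b,z))$ through the local unipotent Kummer map $j_v \colon X(K_v) \to H^1(G_v, U)$, so that the Kim--Tamagawa theorem \cite{kim2008component} applies. By definition $A(b,z) = A(b)^{(P(b,z))}$, and the twisting operation of Subsection~\ref{subsec:algebraic2} gives an algebraic map $H^1(G_v, U) \to H^1(G_v, U(\mathbb{Q}_p, V, \mathbb{Q}_p(1)))$, $[P] \mapsto [A(b)^{(P)}]$. Thus the map under consideration is the composition
\[
X(K_v) \xrightarrow{j_v} H^1(G_v, U) \xrightarrow{[A(b)^{(\cdot)}]} H^1(G_v, U(\mathbb{Q}_p, V, \mathbb{Q}_p(1))) \xrightarrow{h_v} \mathbb{Q}_p,
\]
and the lemma reduces to analyzing this composition.

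\textbf{Finite image for general $v$.} Kim and Tamagawa show that $j_v(X(K_v))$ is finite for every $v \nmid p$. Since the subsequent arrows are set-theoretic functions, the composite has finite image.

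\textbf{Vanishing at potential good reduction.} When $v$ is a prime of potential good reduction for $X$, Kim and Tamagawa show that $j_v$ is the trivial map, i.e.\ $P(b,z)$ is isomorphic to the trivial $U$-torsor for every $z \in X(K_v)$. By functoriality of twisting, this gives $[A(b,z)] = [A(b)]$ in $H^1(G_v, U(\mathbb{Q}_p, V, \mathbb{Q}_p(1)))$, so it remains to show $h_v(A(b)) = 0$. Passing to a finite extension $K_v'/K_v$ over which $X$ acquires good reduction, the unipotent $\mathbb{Q}_p$-\'{e}tale fundamental group of $X_{K_v'}$, and consequently the mixed extension $A(b)$, becomes an unramified representation of $G_{K_v'}$. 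Under the weight-monodromy isomorphism $H^1(G_{K_v'}, U(\mathbb{Q}_p, V, \mathbb{Q}_p(1))) \simeq H^1(G_{K_v'}, \mathbb{Q}_p(1))$ used in the construction of $h_v$, the class of $A(b)|_{G_{K_v'}}$ therefore lies in $H^1_{\un}(G_{K_v'}, \mathbb{Q}_p(1))$, which vanishes because arithmetic Frobenius acts on $\mathbb{Q}_p(1)$ with an eigenvalue different from $1$. A restriction--corestriction argument then recovers the vanishing of $h_v(A(b))$ over $K_v$.

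\textbf{Main obstacle.} The key subtlety lies in controlling the ``constant part'' $A(b)$: one needs not just that the torsor classes $[P(b,z)]$ become trivial at primes of potential good reduction, but that the full $G_v$-representation $A(b)$ is (potentially) unramified, so that the vanishing $H^1_{\un}(G_{K_v'}, \mathbb{Q}_p(1)) = 0$ can be invoked. This is ultimately where the full strength of Kim--Tamagawa, applied to the unipotent fundamental group as a whole rather than just to individual torsor classes, is essential.
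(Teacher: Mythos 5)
Your proof is correct and takes essentially the same route as the paper's, but it is more careful in one place where the paper is terse. Both proofs factor the map through $j_v$ and cite Kim--Tamagawa \cite{kim2008component} for finiteness of $j_v(X(K_v))$ in general, and use the same triviality of the local Kummer map at primes of potential good reduction. The difference is in the potential good reduction case: the paper proves that $j_v$ is trivial (via the exact sequence $1 \to H^1(\Gal(L|K_v),U^{G_L}) \to H^1(G_{K_v},U) \to H^1(G_L,U)$ with $U^{G_L}=1$) and then stops, whereas you explicitly note that the triviality of $j_v$ only reduces the statement to $h_v(A(b))=0$, since $A(b,z)\cong A(b)$ for all $z$. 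This reduction is genuine: $A(b)$ is \emph{not} a priori the trivial mixed extension with graded pieces $\mathbb{Q}_p,V,\mathbb{Q}_p(1)$, so its class in $H^1(G_v,U(\mathbb{Q}_p,V,\mathbb{Q}_p(1)))\simeq H^1(G_v,\mathbb{Q}_p(1))$ needs to be shown to vanish. You handle it correctly: over an extension $K_v'$ where $X$ has good reduction, $A(b)$ is unramified, so its class lands in $H^1_{\un}(G_{K_v'},\mathbb{Q}_p(1))=0$ (Frobenius acts nontrivially on $\mathbb{Q}_p(1)$), and injectivity of restriction on $\mathbb{Q}_p$-coefficient $H^1$ then forces the class over $K_v$ to vanish. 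One small attribution quibble: your closing remark attributes the unramifiedness of the whole $\pi_1$ at good reduction primes to Kim--Tamagawa, but this is really a consequence of smooth proper base change (e.g.\ via Beilinson's cohomological description of $A_n(b,z)$ recorded as Theorem~\ref{beilinson34}); what Kim--Tamagawa specifically supplies is the finiteness statement at the bad primes.
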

\begin{proof}
If $v$ is a prime of potential good reduction, then there is a finite Galois extension $L|K_v $ such that for every $L$-rational point $z$, the $U$-torsor $P(z)$ admits a $G_L$-equivariant trivialisation. From \cite[$\S$I.5.8]{serre:gc},  there is a short exact sequence
$$
1\to H^1 (\Gal (L|K_v ),U^{G_L})\to H^1 (G_{K_v },U)\to H^1 (G_L ,U),
$$
and hence every $G_{K_v}$-equivariant $U$-torsor is trivial, since $U^{G_L}=1$.

For the general case, we use \cite[Corollary 0.2]{kim2008component}, which says that the map
$$j_v :X(K_v )\to H^1 (G_{K_v },U)$$
has finite image. This implies the lemma, as the map $z\mapsto h_v (A(b,z))$ factors through $j_v$.
\end{proof}
We now consider global properties of $A(b,z)$. The mixed extension $A(b,z)$ is a mixed extension of $A_1 (b,z)$ and $IA(b,z)^* (1)$. To understand the height of $A(b,z)$, we first 
need to understand the map
$$
H^1 (G,U)\to \ext ^1 (V,\mathbb{Q}_p (1))
$$
defined by sending a torsor $P$ to the twist of $IA(b)$ by $P$ (when $P=P(b,z)$, the twist of $IA(b)$ by $P$ is $IA(b,z)$).
Let $\langle \, , \,\rangle $ $:V\times V\to \mathbb{Q}_p (1)$ be the homomorphism induced from the Weil pairing and let $\tau _W :V\stackrel{\simeq }{\longrightarrow } \Hom (V,\mathbb{Q}_p (1))$ denote the homomorphism sending $v$ to $w\mapsto \langle w,v \rangle .$ Let $\tau _{W*}$ denote the induced isomorphism $H^1 (G ,V)\simeq \ext ^1 (V,\Q _p (1))$.
Let $ \tau _Z :V\to \Hom (V,\mathbb{Q}_p (1))$ denote the homomorphism sending $v$ to $w\mapsto [\widetilde{w},\widetilde{v}],$
where $\widetilde{w}$ and $\widetilde{v}$ are lifts of $w$ and $v$ to $U$ and $[\, ,\,]$ denotes the commutator in the group $U$. Let $\tau _{Z*}$ denote the induced homomorphism 
$$
H^1 (G,U)\to H^1 (G,V)\to \ext ^1 (V,\mathbb{Q}_p (1)).
$$
We will also denote by $\tau _{Z*}$ the map $H^1 (G,V)\to \ext ^1 (V,\mathbb{Q}_p (1))$ through which the above map factors.
Then by definition of the twisting construction, 
there is an equality of extensions of $\mathbb{Q}_p (1)$ by $V$: 
$$
[IA(b,z)]=[IA(b)]+\tau _{Z*} ([P(b,z)]).
$$
Let $a(Z)$ denote the linear map
$
H^1 _f (G_T ,V)\to H^1 _f (G_T ,V)
$
defined by $$a(Z)=\tau _{W*} ^{-1}\circ \tau _{Z*}.$$ By the above, $A(b,z)$ is a mixed extension of $\kappa (z-b)$ and $a(Z)(\kappa (z-b))+[IA(b)]$, where $\kappa $ is the \'etale Abel-Jacobi map.

We now explain how one obtains equations for the finite set $X(K_{\mathfrak{p}})_U $. First we make precise our choice of $p$-adic height. If $K=\mathbb{Q}$, then up to scalars, there is a unique choice of character $\chi $. Recall that in the imaginary quadratic case, we have a decomposition $p\mathcal{O}_K =\mathfrak{p}\overline{\mathfrak{p}}$. We henceforth take $\chi $ to be an idele class character which vanishes on $\mathcal{O}_{\overline{\mathfrak{p}}}^\times $. By class field theory, the space of such characters is one-dimensional, and hence $\chi $ is uniquely determined up to scalars. Since the mixed extensions $A(b,z)$ are crystalline at all primes above $p$, this means that 
$$
h(A(b,z))=h_\mathfrak{p}(A(b,z))+\sum _{v\in T_0 }h_{v}(A(b,z)).
$$
Let $\omega _0 ,\ldots ,\omega _{g-1}$ be a basis of $H^0 (X_{K_\mathfrak{p}},\Omega ^1 )$.
\begin{Proposition}\label{prop:vital} Suppose $\rk J(K)=g$, that $\rho (J)>1$, and that the map 
\begin{equation}\label{injectivelog}
J(K)\otimes _{\mathbb{Z}}\mathbb{Q}_p \to H^1 _f (G_{\mathfrak{p}},V)
\end{equation}
is an isomorphism. Let $b$ be a $K$-rational point of $X$.
Then the set 
$$
\Omega =\{ -\sum _{v\in T_0 } h_v (A(b,z_v )):(z_v )\in \prod _{v\in T_0 }X(K_v ) \}
$$
is finite, and there are constants $c_{ij},d_i$ (for $0\leq i\leq g-1$) such that $X(K_{\mathfrak{p}})_U$ is finite and contained in the set of $z$ in $X(K_{\mathfrak{p}})$ satisfying
\begin{equation}\label{propositionequation}
 h _{\mathfrak{p}}(A(b,z))+\sum _{0\leq i,j<g }c_{ij}\left(\int ^z _b \omega _i \right)\left(d_j +\sum _{0\leq k<g}a(Z)_{jk}\int ^z _b \omega _k \right)\in \Omega ,
\end{equation}
where $a(Z)_{jk}$ denotes the matrix of $a(Z)$ acting on $H^0 (X,\Omega ^1 )$ with respect to the basis $\omega _i $.
\end{Proposition}
\begin{proof}
By injectivity of (\ref{injectivelog}), for all $0\leq i\leq g-1$ there is a $\kappa _i $ in $H^1 _f (G_T ,V)$ such that $\loc _{\mathfrak{p}}(\kappa _i )=\omega _i ^* $ via the isomorphism $H^1 _f (G_{\mathfrak{p}})\simeq H^0 (X_{K_\mathfrak{p}},\Omega ^1 )^* $. Let $H_{ij}$ be a mixed extension with graded pieces $\mathbb{Q}_p, V,$ and $\mathbb{Q}_p (1)$ such that $\varphi _{0,1}(H_{ij})=\kappa _i $ and $\varphi _{1,2}(H_{ij})=\kappa _j ^* (1)$. Define 
$
c_{ij}=-h(H_{ij}).
$
Define $d_i $ by 
$$
\loc _{\mathfrak{p}}(IA(b)^* (1))=\sum _{0\leq i<g}d_i \omega _i ^* .
$$
Then since (\ref{injectivelog}) is an isomorphism, we have 
\begin{align*}
\varphi _{0,1}(A(b,z)) & =\sum _{0\leq i<g} \left(\int ^z _b \omega _i \right)\kappa _i ,\\
\varphi _{1,2}(A(b,z)) & =\sum _{0\leq j<g} \left(d_j +\sum _{0\leq k<g} a(Z)_{jk}\int ^z _b \omega _k \right)\kappa _j ^* (1). 
\end{align*}
Hence in $\Sym ^2 H^1 _f (G_T ,V)$ we have
$$
\varphi _{0,1}(A(b,z))\varphi _{1,2}(A(b,z))=\sum _{0\leq i,j<g}\left(\int ^z _b \omega _i \right)\left(d_j +\sum _{0\leq k<g}a(Z)_{jk}\int ^z _b \omega _k \right)\kappa _i \kappa _j ,
$$
giving an equality of global heights
$$
h(A(b,z))=\sum _{0\leq i,j<g}\left(\int ^z _b \omega _i \right)\left(d_j +\sum _{0\leq k<g} a(Z)_{jk}\int ^z _b \omega _k \right)h(H_{ij}).
$$
This establishes that $K$-rational points on $X$ satisfy the above equation. By $\S\ref{subsec:algebraic}$ and $\S\ref{subsec:algebraic2}$, for any $\beta $  in $\mathbb{Q}_p $, and any functional
$$
B:H^1 _f (G_{\mathfrak{p}},V)\otimes H^1 _f (G_{\mathfrak{p}},V)\to \mathbb{Q}_p ,
$$
the equation
$$
h _{\mathfrak{p}}(A(b)^{(P)})+B(A_1 (b)^{(P)},(IA(b)^{(P)})^* (1))= \beta
$$
defines a codimension one subvariety $W_\alpha $ of $H^1 _f (G_{\mathfrak{p}},U)$. For $P=A(b,z)$, the left hand side of this equation is equal to 
\begin{equation}\label{eq:LHS}
h _{\mathfrak{p}}(A(b,z))+\sum _{0\leq i,j<g}\left(\int ^z _b \omega _i \right)\left(d_j +\sum _{0\leq k<g} a(Z)_{jk}\int ^z _b \omega _k \right)B(\omega _i ^* \otimes \omega _j ^* )= \beta .
\end{equation} Then, as in \cite{kim:chabauty}, $j_{\mathfrak{p}}^{-1}(W_{\alpha })$ is finite, completing the proof of the proposition.
\end{proof}
\begin{Remark}\label{independence_of_splitting}
Note that the constants $d_i$ and $c_{ij}$ depend on the choice of splitting of the Hodge filtration. However by Remark \ref{Rmk:different_splittings}, the left hand side of \eqref{eq:LHS} is independent of the splitting.
\end{Remark}
\begin{Remark}
If $Z_1$ and $Z_2 $ are two nontrivial Tate classes in the kernel of the cup-product, and $Z_1 \neq -Z_2$, then their sum will be another such Tate class, and the associated mixed extension $A_{Z_1 +Z_2 }(b,z)$ is simply the Baer sum of the mixed extensions $A_{Z_1 }(b,z)$ and $A_{Z_2 }(b,z)$ corresponding to $Z_1$ and $Z_2 $; i.e., in the notation of Section \ref{subsection:mixed_ext},
\[
A_{Z_1 +Z_2 }(b,z)=A_{Z_1 }(b,z)+_{0,1}A_{Z_2}(b,z).
\]
Hence by additivity, $h_p (A_{Z_1 +Z_2 }(b,z))=h_p (A_{Z_1 }(b,z))+h_p (A_{Z_2 }(b,z))$, and so we get no new equations for $X(K_{\mathfrak{p}})$. On the other hand, if $Z_1 ,\ldots ,Z_d$ is a basis for $\Hom (\Q _p ,\Ker (\wedge ^2 H^1 (\overline{X},\Q _p )\to \Q _p (-1))(1)$, then the morphism
\[
H^1 _f (G_{\mathfrak{p}},U_2 ) \to H^1 _f (G_{\mathfrak{p}},V)\times \Q _p ^d
\]  
sending a torsor $P$ to 
\[
(\pi _* (P),h_{\mathfrak{p}}(A_{Z_1 }(b)^{(P)}), \ldots ,h_{\mathfrak{p}}(A_{Z_d }(b)^{(P)}))
\]
is surjective. Since $X(K_{\mathfrak{p}})$ has Zariski dense image in $H^1 _f (G _{\mathfrak{p}},U_2 )$, this implies that we obtain $d$ independent equations satisfied by $X(\Q )$.
\end{Remark}

\begin{Remark}\label{rk:sequel_equal}
In the sequel to this paper \cite[Lemma 13]{balakrishnan2017quadratic}, it is shown that $X(K_{\mathfrak{p}})_U$ is equal to the set of $z\in X(K_{\mathfrak{p}})$ satisfying \eqref{propositionequation}.
\end{Remark}

To complete the proof of Theorem \ref{biellipticformula}, we need to relate $h(A(b,z))$ to a height pairing between algebraic cycles. This identification is explained in $\S\ref{sec:quotientsheights}$.\subsection{Equations for $X(K_{\mathfrak{p}} )_U $ when the Mordell-Weil rank is larger than the genus}
We briefly consider the case where the rank is larger than the genus. Then the formula becomes more complicated, as to get 
constraints on the height of $A(b,z)$, one needs to know the class of $A_1 (b,z)$ in $H^1 _f (G_T ,V)$, and this can no 
longer be recovered directly from its image in $H^1 _f (G_p ,V)$. Instead one shows that the class of a point in $H^1 (G_T ,V)$ is `overdetermined' by the linear and quadratic relations it satisfies and produces an equation just involving functions on $X(K_{\mathfrak{p}})$ by taking an appropriate resultant.

For convenience, we fix a connected component of $\Sel (U_2 )$ corresponding to $$\alpha =(\alpha _v )\in \prod _{v\in T_0 }j_2 (X(K_v )),$$
and describe 
$$
X(K_{\mathfrak{p}})_{\alpha }:=j_{\mathfrak{p}}^{-1}\loc _{\mathfrak{p}}((\prod _{v \in T_0 }j_v )^{-1}(\alpha ))\subset X(K_{\mathfrak{p}})_U .$$

Suppose that $\rk J(K) = n=g+k$, and that $\rk \NS(J)>k$. Let
$$
(Z_0 ,\ldots ,Z_k):\mathbb{Q}_p (-1)^{k+1} \hookrightarrow \Ker (\wedge ^2 H^1 _{\acute{e}t}(\overline{X})\stackrel{\cup }{\longrightarrow }H^2 _{\acute{e}t}(\overline{X})).
$$
be an injective Galois-equivariant homomorphism, let $U_{Z_m }$ be the quotient of $U_2 $ corresponding to $Z_m$, and let $A_{Z_m } (b)$ denote the corresponding quotient of $A_2 (b)$. For $0\leq m\leq k$, define $\alpha _m $ to be minus the sum of the local heights of $A_{Z_m }(b)^{(P)}$ away from $p$:
$$
\alpha _m :=-\sum _{v\in T_0 }h_v (A_{Z_m } (b)^{(\alpha _v )}).
$$
Let $D_0 ,\ldots ,D_{n-1}$ be elements of $\Pic ^0 (X)$ generating $\Pic ^0 (X)\otimes \mathbb{Q}$.
For $0\leq m\leq k$, let $(a(Z_m )_{ij})_{0\leq i,j<n }$ denote the matrix of the endomorphism of $J(K)\otimes \mathbb{Q}$ 
induced by $Z_m $, and let the image of $IA_{Z_m }(b)$ in $H^1 (G_T ,V)$ equal $\sum c(Z_m )_i \kappa (D_i )$.
Let
$F_m$ in $\mathbb{Q}_p [S_0 ,\ldots ,S_{n-1},T_0 ,\ldots ,T_{n-1}]$ for $0\leq m\leq n$ denote the following polynomial:
$$
\begin{array}{ll} T_m -\sum _{j=0}^{n-1} S_j \int _{D_j} \omega _m, & 0\leq m\leq g-1 \\
T_{m}\! -\! \alpha _{m-g}\! - \! \sum _{0\leq i, j<n}h(D_i ,D_j )S_i (c(Z_{m-g})_j S_j +\sum _{0\leq l< n}a(Z_{m-g})_{lj }S_l), & g\leq m\leq n. \\
\end{array} 
$$

\begin{Proposition}\label{prop2}
Let $F=\Res (F_0 ,\ldots ,F_n )\in \mathbb{Q}_p [T_0 ,\ldots ,T_n ]$ be the resultant of the polynomials $F_0 ,\ldots ,F_n$ with respect to the variables $S_0, \ldots ,S_{n-1}$. Then the set of $z$ in $X(K_{\mathfrak{p}})$ such that
\begin{equation}\nonumber
 F\left(\int ^z _b \omega _0 ,\ldots ,\int ^z _b \omega _{g-1},h_{\mathfrak{p}}(A_{Z_0}(b,z)),\ldots ,h_{\mathfrak{p}}(A_{Z_k}(b,z))\right)=0 
\end{equation}
is finite and contains $X(K_{\mathfrak{p}})_\alpha $.
\end{Proposition}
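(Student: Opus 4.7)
The plan is to prove two things: (a) for every $z \in X(K_{\mathfrak{p}})_{\alpha}$, the tuple
\begin{equation*}
t(z) := \left(\int^z_b \omega_0, \ldots, \int^z_b \omega_{g-1}, h_{\mathfrak{p}}(A_{Z_0}(b,z)), \ldots, h_{\mathfrak{p}}(A_{Z_k}(b,z))\right)
\end{equation*}
is a zero of $F$, and (b) the locus in $X(K_{\mathfrak{p}})$ where $F(t(z))=0$ is finite. The strategy for (a) is to produce, for each such $z$, a vector $s(z) = (s_0, \ldots, s_{n-1}) \in \mathbb{Q}_p^n$ simultaneously solving $F_m(s(z), t(z)) = 0$ for every $0 \leq m \leq n$; the defining property of the resultant $F$ then forces $F(t(z)) = 0$.

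To construct $s(z)$, let $U$ be the quotient of $U_2$ corresponding jointly to the span of $Z_0, \ldots, Z_k$, an extension of $V$ by $\mathbb{Q}_p(1)^{k+1}$ surjecting onto each $U_{Z_m}$. By the definition of $X(K_{\mathfrak{p}})_{\alpha}$ there exists $c \in \Sel(U)$ with $j_v(c) = \alpha_v$ for every $v \in T_0$ and $\loc_{\mathfrak{p}}(c) = j_{\mathfrak{p}}(z)$. Its image $\bar{c}$ in $H^1_f(G_T, V) \simeq J(K) \otimes \mathbb{Q}_p$ admits a unique expansion $\bar{c} = \sum_j s_j \kappa(D_j)$, since $D_0, \ldots, D_{n-1}$ generate $J(K) \otimes \mathbb{Q}$; set $s(z) := (s_0, \ldots, s_{n-1})$.

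The equations $F_m(s(z), t(z)) = 0$ for $0 \leq m \leq g-1$ follow by applying $\loc_{\mathfrak{p}}$ to $\bar{c} = \sum_j s_j \kappa(D_j)$ and pairing against $\omega_m$: the standard identification of $\loc_{\mathfrak{p}}$ on $H^1_f(G_T, V)$ with Coleman integration yields $\int^z_b \omega_m = \sum_j s_j \int_{D_j} \omega_m$. For $g \leq m \leq n$, write $m' = m-g$ and invoke the height-decomposition lemma of Section \ref{sec:pathsandlinear}: expanding both factors in $\varphi_{0,1}(A_{Z_{m'}}(b)^{(c)}) \cdot \varphi_{1,2}(A_{Z_{m'}}(b)^{(c)})$ in the basis $\kappa(D_j)$, using $\bar{c} = \sum s_j \kappa(D_j)$, the expansion $\tau_0^{-1}(IA_{Z_{m'}}(b)) = \sum c(Z_{m'})_i \kappa(D_i)$, and the action of $a(Z_{m'})$, one recovers precisely the polynomial in $S$ appearing in $F_m$. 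Combining with the global-to-local decomposition $h = h_{\mathfrak{p}} + \sum_{v \in T_0} h_v$ and the definition $\alpha_{m'} = -\sum_{v \in T_0} h_v(A_{Z_{m'}}(b)^{(\alpha_v)})$ gives $F_m(s(z), t(z)) = 0$, completing (a).

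For (b), observe that $F$ is not identically zero: each $F_m$ has the shape $T_m - P_m(S)$ for $P_m$ depending only on $S$, and the common-vanishing locus is the graph of the map $\mathbb{Q}_p^n \to \mathbb{Q}_p^{n+1}$, $S \mapsto (P_0(S), \ldots, P_n(S))$, whose image is at most $n$-dimensional. Pulling back along the algebraic coordinates on $H^1_f(G_{\mathfrak{p}}, U)$ (the $\omega_m$ on $D_{\dr}(V)/F^0$ and the algebraic height functions $h_{\mathfrak{p}}(A_{Z_{m'}})$, cf.\ Section \ref{mixedandnek}), $F \circ t$ becomes a non-trivial algebraic function on $H^1_f(G_{\mathfrak{p}}, U)$. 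Since $\dim \Sel(U) \leq \rk J(K) = n < n+1 = \dim H^1_f(G_{\mathfrak{p}}, U)$ by Lemma \ref{nondensitylemma}, the standard Chabauty--Kim finiteness argument (non-density of the localisation map together with Zariski density of $j_{\mathfrak{p}}$ on each residue disk \cite{kim:chabauty}) forces $\{z : F(t(z)) = 0\}$ to be finite. The main obstacle is the identification in the third paragraph: one must track the $\Sym^2$ expansion and the Poincar\'e-duality isomorphism $\tau_0 : V \simeq V^*(1)$ carefully enough to reproduce the precise expression $S_i(c(Z_{m'})_j S_j + \sum_l a(Z_{m'})_{lj} S_l)$, matching both kinds of quadratic contributions coming from the Baer-sum decomposition of $\varphi_{0,1}(A) \cdot \varphi_{1,2}(A)$.
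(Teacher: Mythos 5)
The paper states Proposition~\ref{prop2} without giving a proof; the preceding paragraph only indicates that one should exploit the overdetermination of the class in $H^1(G_T,V)$ by the linear and quadratic constraints and then eliminate via a resultant. Your argument is the natural filling-in of that sketch and is structurally correct, running parallel to the proof of the unlabelled Proposition immediately preceding it: extract a class $c\in \Sel(U)$ witnessing $z\in X(K_{\mathfrak{p}})_\alpha$, let $s(z)$ be the coordinates of its image $\bar c$ in the basis $\kappa(D_j)$ of $J(K)\otimes\mathbb{Q}_p$, verify that the $F_m$ vanish at $(s(z),t(z))$ --- linearly for $m<g$ via the identification of $\loc_{\mathfrak{p}}$ with Coleman integration, and for $m\ge g$ via the $\Sym^2$-factorisation of the global height from \S\ref{globalheights} together with the global-to-local decomposition and the definition of $\alpha_{m-g}$ --- and then deduce $F(t(z))=0$ from the defining property of the resultant.

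Two points in the finiteness argument could use tightening. First, the appeal to Lemma~\ref{nondensitylemma} and the inequality $\dim\Sel(U)<\dim H^1_f(G_{\mathfrak{p}},U)$ is not what gives finiteness of the set $\{z:F(t(z))=0\}$: that dimension count gives finiteness of the smaller set $X(K_{\mathfrak{p}})_U$. The set in the Proposition is a priori larger, and its finiteness requires exactly what you sketch just before that sentence, namely that the pulled-back function $\tilde F$ on $H^1_f(G_{\mathfrak{p}},U)$ is not identically zero, after which $j_{\mathfrak{p}}^{-1}(\{\tilde F=0\})$ is finite by the Zariski-density-plus-local-analyticity argument from \cite{kim:chabauty}; the dimension comparison is a side remark, not a step in the chain. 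Second, for $\tilde F\neq 0$ you need not only that $F\neq 0$ as a polynomial in the $T_m$ (which, as you note, relies on the resultant vanishing only on the projection of the $n$-dimensional graph --- a mildly delicate point given that the paper does not specify which resultant is meant), but also that the map $t$ given by the $g$ Coleman integrals and the $k+1$ local heights is dominant as a map $H^1_f(G_{\mathfrak{p}},U)\to\mathbb{A}^{n+1}$. This is true --- the Coleman integrals surject onto $D_{\dr}(V)/F^0$ and the linear independence of the $Z_{m'}$ makes the heights span the $k+1$ remaining fibre directions --- but you assert it under the phrase ``algebraic coordinates'' without argument. These are gaps the paper's terse statement leaves open as well; modulo them, your proof captures the intended argument.
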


\section{Chabauty-Kim theory and $p$-adic heights}\label{sec:quotientsheights}
This section is concerned with relating the mixed extensions $A(b,z)$ defined above to the mixed extensions arising 
from the theory of motivic height pairings as developed by Nekov\'a\v r  \cite{Nek93} and Scholl \cite{scholl1991height}. Such relations have been established in the case of fundamental groups 
of affine elliptic curves in work of Balakrishnan and Besser \cite{bb:crelle} and Balakrishnan, Dan-Cohen, Kim and Wewers \cite{balakrishnan2012non} 
and in the case of affine hyperelliptic curves in work of Balakrishnan, Besser and M\"uller \cite{balakrishnan2013p}. 

\subsection{Notation}
In this section, we will repeatedly consider various Ext groups of constructible $\mathbb{Q}_p $-sheaves on $\overline{X}\times \overline{X}$. As all cohomology will be \'etale, we will omit subscripts. For codimension one cycles $Z_1 ,Z_2 \subset X\times X$, we will write $H^i (\overline{X}\times \overline{X}-|Z_1 | ;|Z_2 | )$ to mean 
$$
\ext ^i (j_{1!} j_1 ^* \mathbb{Q}_p ,j_{2!}j^* _2 \mathbb{Q}_p ):=\mathbb{Q}_p \otimes \varprojlim 
\ext ^i (j_{1!} j_1 ^* \mathbb{Z}/p^n \mathbb{Z} ,j_{2!}j^* _2 \mathbb{Z}/p^n \mathbb{Z} ),
$$
where $j_1 $ and $j_2 $ are the open immersions of the complements of $Z_1 $ and $Z_2$ into $X\times X$, and the Ext groups are in the category of constructible sheaves on $\overline{X}\times \overline{X}$. Similarly if $i_1 ,i_2 $ are the closed immersions of $|Z_1 |$ and $|Z_2 |$ into $X\times X$ we write $H^i _{|Z_1|}(\overline{X}\times \overline{X};|Z_2 |)$ to mean $ \Q _p \otimes \varprojlim \ext ^i (i_{1*} i_1 ^* \mathbb{Z}/p^n \mathbb{Z} ,j_{2!}j^* _2 \mathbb{Z}/p^n \mathbb{Z} )$, and so on.
We write $D.E$ to mean the intersection number of the cycles. For a smooth variety $S$ and a 
cycle $E$ in $Z^i (S)$ we write $\widetilde{\cl } _E $ to mean the induced homomorphism
$$
\mathbb{Q}_p (-k)\to H^{2k}_E (\overline{S})
$$
and write $\cl _E $ to mean the composite map
$$
\mathbb{Q}_p (-k)\to H^{2k}_E (\overline{S})\to H^{2k}(\overline{S}).
$$
Finally, to simplify notation we will often write $H^i (X)$ to mean $H^i (\overline{X})$, etc.

\subsection{The height pairing on algebraic cycles}\label{subsec:nekovar}
To relate fundamental groups to $p$-adic heights,  we first explain what the local height functions defined above have to do with height pairings. We restrict attention to the case of the $p$-adic height pairing on the curve $X$. Given a pair $(Z,W)$ of cycles in $\Div ^0 (X)$ with disjoint support $|Z|$ and $|W|$, we construct a mixed extension $H_X(Z,W)$ with graded pieces $\mathbb{Q}_p ,V,$ and $\mathbb{Q}_p (1)$ as a subquotient of 
$H^1 (\overline{X}-|Z|;|W|)(1)$ as follows \cite[$\S 5.6$]{Nek93}.  The representation $H^1  (\overline{X}-|Z|;|W|)(1)$ is a mixed extension with graded pieces $\Ker (H^2 _{|Z|}(\overline{X})\to H^2 (\overline{X}))(1)$, $V$ and
$\Ker (H^2 _{|W|}(\overline{X})\to H^2 (\overline{X}))^*$. Pulling back by 
$$
\mathbb{Q}_p \stackrel{\widetilde{\cl }_Z }{\longrightarrow }\Ker (H^2 _{|Z|}(\overline{X})\to H^2 (\overline{X}))(1)
$$ 
and then pushing out by the dual of 
$$
\mathbb{Q}_p (-1) \stackrel{\widetilde{\cl }_W }{\longrightarrow }\Ker (H^2 _{|W|}(\overline{X})\to H^2 (\overline{X}))
$$
 gives a mixed extension with graded pieces $\mathbb{Q}_p ,V,$ and $\mathbb{Q}_p (1)$, denoted $H_X (Z,W)$. Composing with $h_v$ gives, at each prime, a functional 
$$
(Z,W)\mapsto h_v (H_X(Z,W)).
$$
By \cite[$\S 2$]{Nek93}, this is bi-additive, symmetric, and if $Z=\div (f)$ then 
$$
h_v (Z,W)=\chi _v (f(W)).
$$ 
We denote $h_v (H_X (Z,W))$ simply by $h_v (Z,W)$. Given cycles $Z$ and $W$ in $\Div ^0 (X_K )$ with disjoint support, one defines the global $p$-adic height $h (Z,W)$ associated to $\chi ,s$ to be the sum over all $v$ of $h_v (Z,W)$. The function $h$ is bilinear and factors through $\Pic ^0 (X)\times \Pic ^0 (X)$, unlike the local heights. 

\subsection{Beilinson's formula}\label{subsection:Beilinson}
The proof of the relation to $p$-adic heights starts with a motivic interpretation of $A_n (b,z)$, due to Beilinson \cite[Proposition 3.4]{deligne2005groupes} and is followed by a little diagram chasing. To state Beilinson's theorem, let $Y$ be a smooth geometrically connected variety over a field $K$ of characteristic zero. Let $b$ and $z$ be $K$-rational points of $Y$. 
As before, let 
$$
A_n (Y) (b):=\mathbb{Q}_p [\pi _1 ^{\acute{e}t}(\overline{Y},b)]/I^{n+1}
$$
and
$$
A_n (Y) (b,z):=\mathbb{Q}_p [\pi _1 ^{\acute{e}t}(\overline{Y},b,z)]\otimes _{\mathbb{Q}_p [\pi _1 ^{\acute{e}t}(\overline{Y},b)]}A_n (Y)(b).
$$
\begin{Theorem}[Beilinson {\cite[Proposition 3.4]{deligne2005groupes}}]\label{beilinson34}
 Let $Y^n $ denote the $n$-fold product of $Y$ over $K$. Let $D_0 $ denote $b\times Y^{n-1}$, 
$D_n$ denote $Y^{n-1}\times z$, and for $0<i<n$, define $D_i $ to be the codimension one subscheme of $Y^n $ on which the $i$th and $(i+1)$th coordinates are equal. Then there is a functorial isomorphism of $G_K$-representations
$$
A_n (Y)(b,z)\simeq \left\{ \begin{array}{cc} H^n (\overline{Y}^n ;\bigcup _{i=0}^n D_i )^* & b\neq z \\
H^n (\overline{Y}^n ;\bigcup _{i=0}^n D_i )^* \oplus \mathbb{Q}_p & b =z. \\ \end{array}\right.
$$
\end{Theorem}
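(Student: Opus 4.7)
The plan is to prove the theorem by induction on $n$, handling part (i) (where $b \neq z$) first and then deducing part (ii). For the base case $n = 1$, the excision long exact sequence for the pair $(\overline{Y}, \{b, z\})$ reads
\[
0 \to H^0(\overline{Y}) \to H^0(\{b\} \sqcup \{z\}) \to H^1(\overline{Y}; \{b, z\}) \to H^1(\overline{Y}) \to 0,
\]
exhibiting $H^1(\overline{Y}; \{b, z\})^*$ as a Galois-equivariant extension of $\mathbb{Q}_p$ by $H^1(\overline{Y})^*$. On the other side, $A_1(Y)(b, z)$ is a free rank-one module over $A_1(Y)(b) = \mathbb{Q}_p \oplus I/I^2$ and is hence itself an extension of $\mathbb{Q}_p$ by $I/I^2$; the Hurewicz map identifies $I/I^2$ with $\pi_1^{\acute{e}t, \ab}(\overline{Y}) \otimes \mathbb{Q}_p$, which is Kummer-dual to $H^1(\overline{Y})$. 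The two extensions are then matched via the canonical Galois-equivariant pairing between paths and relative cohomology classes.

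For the inductive step, I would apply the Leray spectral sequence to the projection $\pi \colon \overline{Y}^n \to \overline{Y}^{n-1}$ forgetting the last coordinate. The divisors split according to their behaviour under $\pi$: the divisors $D_0, \ldots, D_{n-2}$ are pullbacks of divisors $D_0', \ldots, D_{n-2}'$ on the base, while $D_{n-1}$ (the graph of the $(n-1)$st coordinate) and $D_n$ (the constant section with value $z$) are horizontal. Writing $\mathcal{H}^q = R^q\pi_* \mathbb{Q}_{(\overline{Y}^n;\, D_{n-1} \cup D_n)}$, whose fibre at $(y_1, \ldots, y_{n-1})$ is $H^q(\overline{Y}; \{y_{n-1}, z\})$, the spectral sequence takes the form
\[
E_2^{p, q} = H^p\bigl(\overline{Y}^{n-1};\, \bigcup\nolimits_{i=0}^{n-2} D_i';\; \mathcal{H}^q\bigr) \;\Longrightarrow\; H^{p+q}\bigl(\overline{Y}^n;\, \bigcup\nolimits_{i=0}^n D_i\bigr).
\]
Granting that only $E_2^{n-1, 1}$ contributes in total degree $n$, the base case dualizes $\mathcal{H}^1$ to the local system $y \mapsto A_1(Y)(y, z)$ on the last factor. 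A suitable strengthening of the inductive hypothesis allowing local-system coefficients then yields $H^n(\overline{Y}^n; \bigcup D_i)^* \simeq A_n(Y)(b, z)$, with the right-hand side identified as the path-concatenation product $A_{n-1}(Y)(b, y) \otimes_{A_1(Y)(y)} A_1(Y)(y, z)$, the parameter $y$ being absorbed against the last-factor cohomology.

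The main obstacle is the collapsing claim that only $E_2^{n-1, 1}$ contributes in total degree $n$: one must check that the higher Künneth components of $H^*(\overline{Y})^{\otimes n}$ carrying factors in $H^{>1}(\overline{Y})$ are all killed by restriction to the diagonal divisors $D_1, \ldots, D_{n-1}$. This is the core of Beilinson's original argument and is the étale shadow of the shuffle relations among iterated integrals; it is most cleanly seen by expressing each $D_i$ as the zero locus of the difference of the $i$th and $(i+1)$st projections and computing how this restriction annihilates the relevant higher-degree factors in the Künneth decomposition. A secondary technical point is formulating and propagating the induction with twisted coefficients, which can be finessed by re-expressing the $A_1(Y)(y, z)$-valued cohomology as untwisted cohomology of a larger cosimplicial scheme, thereby returning to the shape of the original theorem. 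Part (ii) is then obtained from part (i) by a tangential-basepoint specialization as $z \to b$, the extra $\mathbb{Q}_p$-summand in $A_n(Y)(b)$ corresponding to the trivial loop at $b$ and appearing cohomologically from the degeneration of the extension structure as the configuration of divisors becomes singular.
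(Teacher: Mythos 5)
The paper does not prove this theorem; it is quoted directly from Deligne--Goncharov (Proposition 3.4), where it is proved via the bar construction and totalization of a cosimplicial scheme. Your proposal, by contrast, attempts a direct induction on $n$ via the Leray spectral sequence of the last-coordinate projection. Your base case $n=1$ is correct. However, the inductive step contains two gaps that you yourself flag but do not resolve, and both are substantive rather than technical polish. First, the collapsing claim that only $E_2^{n-1,1}$ contributes in total degree $n$ is precisely the content of the theorem: for projective $Y$ the fibre cohomology $\mathcal{H}^2$ is genuinely nonzero (e.g.\ $H^2(\overline{Y};\{y_{n-1},z\}) \simeq H^2(\overline{Y}) \neq 0$), and showing that those contributions are absorbed by restriction to the remaining diagonal divisors is the real work. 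Asserting that it is ``most cleanly seen by computing how the restriction annihilates the relevant higher-degree factors in the K\"unneth decomposition'' is a description of what needs to be proven, not a proof.

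Second, and more problematic for the induction's structure, the object $\mathcal{H}^1$ is \emph{not} a local system: the stalk $H^1(\overline{Y};\{y_{n-1},z\})$ drops rank along the locus $y_{n-1}=z$ (it loses the extension summand $\mathbb{Q}_p$ exactly where the two marked points collide). It is a constructible sheaf with a jump along $D_{n-1}\cap D_n$, and your proposed dual family $y \mapsto A_1(Y)(y,z)$ has \emph{constant} rank (the extra $\mathbb{Q}_p$ at $y=z$ appearing in part (ii) is exactly the mismatch). So the ``strengthened inductive hypothesis with local-system coefficients'' is not available in the naive form. Your suggested escape --- re-expressing the twisted cohomology as untwisted cohomology of a larger cosimplicial scheme --- would in effect reintroduce the cosimplicial/bar machinery of the original Deligne--Goncharov proof, at which point the induction is no longer doing the work. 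Finally, the passage from (i) to (ii) by a ``tangential-basepoint specialization as $z\to b$'' is not justified; the cohomological side visibly changes (the two marked points merge, $H^0(\{b\}\sqcup\{z\})$ drops rank), and one should instead run the same excision sequences directly with a single marked point, which is in fact simpler than a degeneration argument.
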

We will be interested in applying Theorem \ref{beilinson34} in the case when $n = 2$, for the smooth projective curve $X$ and for the affine curve $Y:=X-x$ obtained by removing $x \in X(K)$. Define $S:=Y\times Y$.

Let $b$ and $z$ be distinct, both not equal to $x$. Define $X_1 :=\{b \}\times X$,$X_2 :=X\times \{z \}$, and define
\begin{equation}\nonumber
 i_1 ,i_2 ,i_{\Delta } :X \hookrightarrow X\times X
\end{equation}
to be the closed immersions with images $X_1 ,X_2,$ and $\Delta $, respectively.
For future use we also let
\begin{equation}\nonumber
\pi _1 ,\pi _2 :X\times X \to X
\end{equation}
denote the projection maps. We use the same notation for the corresponding maps with $X$ and $X\times X$ replaced by $Y$ and $Y\times Y$.

By Beilinson's theorem, the diagram

$$
\begin{tikzpicture}
\matrix (m) [matrix of math nodes, row sep=2em,
column sep=2em, text height=1.5ex, text depth=0.25ex]
{0 & V^{\otimes 2} & A_2 (Y)(b,z) & A_1 (Y)(b,z) & 0 \\
0 & \Coker({\Q _p (1)\stackrel{\cup ^* }{\longrightarrow }V^{\otimes 2}}) & A_2 (X)(b,z) & A_1 (X)(b,z) & 0 \\ };
\path[->]
(m-1-1) edge[auto] node[auto]{} (m-1-2)
(m-1-2) edge[auto] node[auto]{} (m-1-3)
edge[auto] node[auto]{} (m-2-2)
(m-1-3) edge[auto] node[auto]{} (m-1-4)
edge[auto] node[auto]{} (m-2-3)
(m-1-4) edge[auto] node[auto]{} (m-1-5)
edge[auto] node[auto]{} (m-2-4)
(m-2-1) edge[auto] node[auto]{} (m-2-2)
(m-2-2) edge[auto] node[auto]{} (m-2-3)
(m-2-3) edge[auto] node[auto]{} (m-2-4)
(m-2-4) edge[auto] node[auto]{} (m-2-5);
\end{tikzpicture} $$
is dual to
\begin{equation}\label{eq:SES}
\begin{tikzpicture}
\matrix (m) [matrix of math nodes, row sep=2em,
column sep=2em, text height=1.5ex, text depth=0.25ex]
{0 & H^1 (X;\{ b,z \}) & H^2 (X \! \times \! X;X_1 \! \cup \! X_2 \! \cup \! \Delta ) & \Ker (H^1 (X)^{\otimes 2}\stackrel{\cup }{\to }H^2 (X)) & 0 \\
0 & H^1 (Y; \{ b,z \}) & H^2 (S;X_1 \! \cup \! X_2 \! \cup \! \Delta ) & H^1 (X)\otimes H^1 (X) & 0. \\ };
\path[->]
(m-1-1) edge[auto] node[auto]{} (m-1-2)
(m-1-2) edge[auto] node[auto]{} (m-1-3)
edge[auto] node[auto]{} (m-2-2)
(m-1-3) edge[auto] node[auto]{} (m-1-4)
edge[auto] node[auto]{ $ \iota $} (m-2-3)
(m-1-4) edge[auto] node[auto]{} (m-1-5)
edge[auto] node[auto]{} (m-2-4)
(m-2-1) edge[auto] node[auto]{} (m-2-2)
(m-2-2) edge[auto] node[auto]{} (m-2-3)
(m-2-3) edge[auto] node[auto]{} (m-2-4)
(m-2-4) edge[auto] node[auto]{} (m-2-5);
\end{tikzpicture} \end{equation}
Here, the top right morphism is the map 
$$
H^2 (X \times X;X_1 \cup X_2 \cup \Delta )\to \Ker (H^2 (X\times X ;X_1 \cup X_2 )\to H^2 (X;b \cup z))
$$
 composed with the isomorphism
$$
\Ker (H^2 (X\times X ;X_1 \cup X_2 )\to H^2 (X; \{ b,z\} )) \simeq \Ker (H^1 (X)\otimes H^1 (X)\stackrel{\cup }{\longrightarrow }H^2 (X))
$$
coming from the commutative diagram
$$
\begin{tikzpicture}
\matrix (m) [matrix of math nodes, row sep=2em,
column sep=2em, text height=1.5ex, text depth=0.25ex]
{H^2 (X\times X;X_1 \cup X_2 ) & H^1 (X;b )\otimes H^1 (X;z) & H^1 (X)\otimes H^1 (X) \\
H^2 (X; \{ b,z\} ) & H^2 (X; \{ b,z\} ) & H^2 (X) \\ };
\path[->]
(m-1-1) edge[auto] node[auto]{$\simeq $} (m-1-2)
edge[auto] node[auto]{$\Delta _X ^* $} (m-2-1)
(m-1-2) edge[auto] node[auto]{$\simeq $} (m-1-3)
edge[auto] node[auto]{$\cup $} (m-2-2)
(m-1-3) edge[auto] node[auto]{$\cup $} (m-2-3)
(m-2-1) edge[auto] node[auto]{$\simeq $} (m-2-2)
(m-2-2) edge[auto] node[auto]{$ \simeq $} (m-2-3);
\end{tikzpicture} 
$$
and the bottom right map is similarly coming from an isomorphism $H^2 (S;X_1 \cup X_2 )\simeq H^1 (X)^{\otimes 2}$.

Via K\"unneth projectors, we have a cycle class map $\cl _Z :\Q _p (-1)\to H^1 (X)\otimes H^1 (X)$. Via the cycle class map, we may pull back the bottom row of $\eqref{eq:SES}$ to obtain an extension of $\Q _p (-1)$ by $H^1 (Y;\{ b,z \})$, giving a mixed extension with graded pieces $\Q _p (-1),V(-1)$ and $\Q _p $. Let $E_Z =E_Z (b,z)$ be the mixed extension with graded pieces $\Q _p ,V$ and $\Q_p (1)$ obtained by twisting this by $\Q _p (1)$.
As explained above, $E_Z$ is the Tate dual of $A_Z (Y)(b,z)$.
If the intersection number of $Z$ with $\Delta -X_1 -X_2 $ is zero, then its cycle class lies in the image of $\iota $, hence in this case we may pull back the top row of \eqref{eq:SES} by $\cl _Z$, and then $E_Z$ is the Tate dual of $A_Z (X)(b,z)$.

\subsection{$h(A(b,z))$ as a height pairing between algebraic cycles}\label{subsection:heart}
Via the cohomological characterisation of $A_Z (b,z)$, describing the local heights of $A(b,z)$ 
in terms of the height pairings on $X$ amounts to finding divisors $D_1 $, $D_2$ in $\Div ^0 (X)$, and an isomorphism
between the subquotient $H^1 (\overline{Y}-|D_1 |;|D_2 |)$ corresponding to $h(D_1 ,D_2 )$  and the subquotient of $H^2 (S;X_1 \cup X_2 \cup \Delta )$ corresponding to $A(b,z)$.

Let $Z$ be a divisor of $\overline{S}$ intersecting $X_1$, $X_2$, and $\Delta$ properly. We 
somewhat abusively denote the composite map
\begin{equation}\nonumber
\mathbb{Q}_p (-1)\stackrel{\cl _{Z}}{\longrightarrow }H^2 (\overline{S})\to 
H^1 (\overline{X})^{\otimes 2}\stackrel{\simeq }{\longrightarrow }H^2 (\overline{S};X_1 \cup X_2 )
\end{equation}
by $\cl _Z$,
where the last map is the isomorphism defined above.
\begin{Definition}\label{defnofdbz}
Define $D(b,z)\in \Div ^0 (X)$ to be the cycle $$i_{\Delta } ^* Z -i_1 ^* Z -i_2 ^*Z +(Z.X_1 +Z.X_2 -Z.\Delta )x.$$
\end{Definition}

The following theorem says that the mixed extension 
$A(b,z)$ is exactly the one built out of the degree zero divisors $z-b$ and $D(b,z)$. In \cite[Theorem 2.2]{darmon2012iterated}, Darmon, Rotger and Sols proved that the Abel-Jacobi class of $D(b,z)$ is equal to the extension of $\mathbb{Z}$-mixed Hodge structure corresponding to the motive whose \'etale realisation is $IA(b,z)$. This generalised previous work of Kaenders \cite{kaenders2001mixed}. The theorem below refines this to determine $A(b,z)$ as a mixed extension of $\kappa (z-b)$ and $IA(b,z)^* (1)$.
\begin{Theorem}\label{height}
Let $Z$ be any codimension 1 cycle in $X\times X$ whose image in $H^2 (S)$ is nonzero.
The mixed extension $E_Z$ is isomorphic to $H_X (z-b,i_{\Delta } ^* Z -i_1 ^* Z -i_2 ^* Z +mx)(-1)$, where $m$ 
is the intersection number of $Z$ with $X_1 +X_2 -\Delta$, and $H_X$ is Nekov\'a\v r's mixed extension construction defined in Section \ref{subsec:nekovar}.
\end{Theorem}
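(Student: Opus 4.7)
The plan is to identify the two mixed extensions $E_Z$ and $H_X(z-b, D(b,z))(-1)$ via their classifying data. Both sides are mixed extensions with graded pieces $\mathbb{Q}_p, V, \mathbb{Q}_p(1)$, and such an object is determined by (i) the sub-extension $\varphi_{0,1}\in \Ext^1(\mathbb{Q}_p, V)$, (ii) the quotient-extension $\varphi_{1,2}\in \Ext^1(V, \mathbb{Q}_p(1))$, and (iii) the residual bi-extension data gluing these together. I would check each of (i), (ii), (iii) in turn.

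The edge extensions can be identified almost directly. For $\varphi_{0,1}(E_Z)$: quotienting the defining pullback square of $E_Z$ by the $\mathbb{Q}_p(1)$ at the bottom of $H^1(\overline{Y}; b\cup z)(1)$ replaces this group by $V$ and $H^2(\overline{S}; X_1\cup X_2 \cup \Delta)(1)$ by $H^2(\overline{S}; X_1 \cup X_2)(1)\cong H^1(\overline{Y})^{\otimes 2}(1)$, using the two short exact sequences recorded immediately before the theorem statement. Pulling back along $\cl_Z$ then identifies this extension with the Abel--Jacobi class $\kappa(z-b)$ via the Kunneth decomposition. For $\varphi_{1,2}(E_Z)$ I would invoke Theorem 2.2 of Darmon--Rotger--Sols (cited in the paragraph preceding the theorem), which says precisely that the extension class corresponding to $IA(b,z)$ is the Abel--Jacobi class of the divisor $D(b,z) = i_\Delta^* Z - i_1^* Z - i_2^* Z + mx$. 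The correction term $mx$ is exactly what forces $D(b,z)$ to have degree zero, consistent with our hypothesis on the intersection number $Z.(X_1+X_2-\Delta) = m$.

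The main obstacle is step (iii): matching the mixed extensions as a whole, and not merely their two edge extensions. For this I would construct an explicit morphism between the long exact sequences defining the two sides, induced by the combination of restriction maps $i_\Delta^* - i_1^* - i_2^*$ on cohomology (together with the correction coming from the base point $x$ of $Y = X - x$). The divisor identity
\[
i_\Delta^* Z - i_1^* Z - i_2^* Z + mx = D(b,z)
\]
is precisely what is needed so that the pullback of this morphism by $\cl_Z$ on the surface $\overline{S}$ corresponds to the pullback along $\widetilde{\cl}_{z-b}$ and pushout along the dual of $\widetilde{\cl}_{D(b,z)}$ on $\overline{X}$, which is by definition the construction of $H_X(z-b, D(b,z))$. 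The technical heart is a diagram chase combining the Kunneth decomposition on $\overline{Y}^2$, the Gysin sequence for removing $\{b,z\}$ from $\overline{X}$, and the compatibility of cycle classes with pullback along $i_\Delta, i_1, i_2$. Once the constructed morphism is shown to respect the full three-step filtration on both sides, the isomorphism of mixed extensions follows formally from the universal properties of the pullback/pushout constructions.
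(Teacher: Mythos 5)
Your decomposition of the problem is sound: the two edge extensions $\varphi_{0,1}$ and $\varphi_{1,2}$ are comparatively easy (and the paper indeed attributes $\varphi_{1,2}$ to Darmon--Rotger--Sols as you note), and the real content of the theorem is matching the full mixed-extension structure. You correctly identify this as the hard step. However, your step (iii) has a genuine gap: you propose to ``construct an explicit morphism between the long exact sequences defining the two sides, induced by the combination of restriction maps $i_\Delta^* - i_1^* - i_2^*$,'' but no such morphism exists directly. The left-hand side lives inside $H^2(\overline{S};X_1\cup X_2\cup\Delta)(1)$ and the right-hand side inside (a subquotient of) $H^1(\overline{Y}-\cdots;b\cup z)$; these sit in different cohomological degrees, and restriction along $i_\Delta$ alone will not produce the degree shift. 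The paper's actual mechanism is different: one introduces the auxiliary cycle $W = Z - \pi_2^* i_1^* Z - \pi_1^* i_2^* Z$ on $\overline{S}$, proves as the key lemma that $\cl_Z(\mathbb{Q}_p(-1))$ vanishes in $H^2(\overline{S}-|W|;X_1\cup X_2)$ (so that $\cl_Z$ lifts to cohomology with support in $|W|\cup D$), and then uses the connecting homomorphism of the resulting $3\times 3$ commutative diagram of exact sequences to realise $E_Z$ inside $H^1(\overline{Y}-|i_\Delta^*W|;b\cup z)$. Only at this last stage does a single map $i_\Delta^*$ on local cohomology enter, and it is here that the arithmetic $i_\Delta^*W = i_\Delta^*Z - i_1^*Z - i_2^*Z$ and the correction $-(W.\Delta)x = mx$ emerge.

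Concretely, three ingredients are missing from your sketch: (a) the choice of $W$, without which there is no lift of $\cl_Z$ to local cohomology and hence no degree shift from $H^2$ on the surface to $H^1$ on the curve; (b) the verification that $\cl_Z$ dies in $H^2(\overline{S}-|W|;X_1\cup X_2)$, which is the geometric heart of the construction and is a nontrivial computation with supports; and (c) the compatibility lemma for connecting homomorphisms in a $3\times 3$ grid of complexes, which the paper proves by an explicit diagram chase and which is needed to identify the two candidate maps $\mathbb{Q}_p(-1) \to H^2_{|i_\Delta^*W|}(\overline{Y};b\cup z)$. Your closing sentence (``once the constructed morphism is shown to respect the full three-step filtration on both sides, the isomorphism of mixed extensions follows formally'') is correct as stated, but it presupposes the morphism whose construction is the theorem's entire difficulty. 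As it stands, the proposal is an accurate roadmap of what must be shown but not a proof of the step that carries the content beyond the two edge extensions.
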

Before giving the proof of this theorem, we explain how it completes the proof of Theorem \ref{biellipticformula}.
\begin{proof}[Proof of Theorem \ref{biellipticformula}]
By Theorem \ref{height}, for all $v$,
\[
h_v (A(b,z))=h_v (E_Z )=h(z-b,D(b,z)),
\]
as $E_Z$ and $H_X (z-b,D(b,z))$ are isomorphic mixed extensions and $h(z-b,D(b,z))=h(H_X(z-b,D(b,z)))$ by definition.
Hence Theorem \ref{biellipticformula} follows from Proposition \ref{prop:vital}.
\end{proof}
\begin{Remark}\label{affine_version}
One may also use Theorem \ref{height} to turn Remark \ref{rmk:integral_points} into a formula computing a finite set of points containing $\mathcal{Y}(\mathcal{O}_K )$. More precisely, if $b$ is an integral point of $\mathcal{Y}$, and $Z$ is a cycle with nonzero image in $\wedge ^2 H^1 (\overline{X})$, then for all $v$ not dividing $\mathfrak{p}$, $h_v (z-b,(i_\Delta ^* -i_1 ^* -i_2 ^* )Z+mx)$ takes only finitely many values and is identically zero on all primes of good reduction, and one obtains a formula for a finite set containing $\mathcal{Y}(\mathcal{O}_{\mathfrak{p}})_2 \cap X' (K_{\mathfrak{p}})$ in terms of $h_{\mathfrak{p}} (z-b,(i_\Delta ^* -i_1 ^* -i_2 ^* )Z+mx)$ and $\log _J$ in an analogous manner.

\end{Remark}
\begin{proof}[Proof of Theorem \ref{height}]
For any cycle $W\subset X$ we have a commutative diagram with exact columns and rows
$$
\footnotesize
\begin{tikzpicture}
\matrix (m) [matrix of math nodes, row sep=2em,
column sep=2em, text height=1.5ex, text depth=0.25ex]
{H^1 _{|i_{\Delta } ^* W|  }(Y;\{ b , z \}) & H^2 _{|W| } (S;X_1 \cup X_2 \cup \Delta   ) & H^2 _{ |W|} (S; X_1 \cup X_2 ) \\
H^1 (Y;\{b , z \}) & H^2 (S ;X_1 \cup X_2 \cup \Delta   ) & H^2 (S;X_1 \cup X_2 )\\
H^1 (Y -|i_{\Delta } ^* W| ;\{ b,z \}) & H^2 (S -|W| ;X_1 \cup X_2 \cup \Delta  ) & H^2 (S-|W|;X_1 \cup X_2 ). \\ };
\path[->]
(m-1-1) edge[auto] node[auto]{} (m-2-1)
edge[auto] node[auto] {} (m-1-2)
(m-1-2) edge[auto] node[auto]{} (m-2-2)
edge[auto] node[auto] {} (m-1-3)
(m-1-3) edge[auto] node[auto]{} (m-2-3)
(m-2-1) edge[auto] node[auto]{} (m-3-1)
edge[auto] node[auto] {} (m-2-2)
(m-2-2) edge[auto] node[auto]{} (m-3-2)
edge[auto] node[auto] {} (m-2-3)
(m-2-3) edge[auto] node[auto]{} (m-3-3)
(m-3-1) edge[auto] node[auto]{} (m-3-2)
(m-3-2) edge[auto] node[auto]{} (m-3-3);
\end{tikzpicture} $$
To prove the theorem, we first find a cycle $W$ such that the image of $\cl_Z (\mathbb{Q}_p (-1))$ in 
$H^2 (S-|W|;X_1 \cup X_2 )$ is zero. This identifies $E_Z$ with a subspace of \\
$H^1 (Y -|i_{\Delta } ^* W| ;\{ b,z \}) $. One then determines the subspace exactly by giving a cohomological interpretation of the inclusion of the weight 2 part of $E_Z$ inside the weight 2 part of $H^1 (\overline{Y} -|i_{\Delta } ^* W| ;\{ b, z \})$.

Suppose $i_1 ^* Z =\sum n_i x_i$. Then $\pi _2 ^* i_1 ^* Z =\sum n_i x_i \times \overline{X}$.  Similarly, define $\pi _1 ^* i_2 ^* Z $. 
Define 
\begin{equation}\nonumber
W:=Z -\pi _2 ^* i_1 ^* Z-\pi _1 ^* i_2 ^* Z .
\end{equation}

\begin{Lemma}
The image of $\cl_Z (\mathbb{Q}_p (-1))$ in $H^2 (S-|W|;X_1 \cup X_2 )$ is zero.
\end{Lemma}
\begin{proof}
Let $D:=X\times X-S$.
It is enough to show that $\cl_Z (\mathbb{Q}_p (-1))$ is in the image of 
$$
H^2 _{|W| \cup D}(X\times X;X_1 \cup X_2 )\to H^2 (S;X_1 \cup X_2 ).
$$
Let $W_1 :=|i_1 ^* W|\cup i_1 ^{-1}D$ and $W_2 :=|i_2 ^* W|\cup i_2 ^{-1}D$.
There is a commutative diagram with exact rows
$$
\footnotesize
\begin{tikzpicture}
\matrix (m) [matrix of math nodes, row sep=2em,
column sep=2em, text height=1.5ex, text depth=0.25ex]
{0 & H^2 _{|W|\cup D}(\overline{X}\times \overline{X};X_1 \cup X_2 ) & H^2 _{|W|\cup D}(\overline{X}\times \overline{X}) & 
H^2 _{W_1} (X) \oplus H^2 _{W_1}(X )  \\
& H^2 (\overline{X}\times \overline{X};X_1 \cup X_2 ) & H^2 (\overline{X}\times \overline{X}) & H^2  (X_1 )  \oplus H^2 (X_2 ). \\};
\path[->]
(m-1-1) edge[auto] node[auto]{} (m-1-2)
(m-1-2) edge[auto] node[auto]{} (m-2-2)
edge[auto] node[auto] {} (m-1-3)
(m-1-3) edge[auto] node[auto]{} (m-2-3)
edge[auto] node[auto] {} (m-1-4)
(m-1-4) edge[auto] node[auto]{} (m-2-4)
(m-2-2) edge[auto] node[auto]{}  (m-2-3)
(m-2-3) edge[auto] node[auto]{} (m-2-4);
\end{tikzpicture} $$
The class of $Z$ in $H^2 (\overline{X}\times \overline{X})$ lifts to an element of $H^2 _{W\cup D}(\overline{X}\times \overline{X})$ by construction.
Hence to show $\cl_Z (\mathbb{Q}_p (-1))$ lifts to an element of $H^2 _{W\cup D}(\overline{X}\times \overline{X};X_1 \cup X_2 )$, 
it is enough to show that it lies in the kernel of 
$$H^2 _{W\cup D}(\overline{X}\times \overline{X}) \stackrel{i_1 ^* \oplus i_2 ^*}{\longrightarrow} H^2 _{W_1} (X) \oplus H^2 _{W_1}(X ). $$
This is the case since, in $H^2 _{W_1} (X)$, we have
$i_1 ^* \pi _2 ^* i_1 ^* Z=i_1 ^* Z$ and $i_1 ^* \pi _1 ^* i_2 ^* Z=0$, and similarly for $H^2 _{W_2 }(X)$.
\end{proof}
Hence we deduce that $E_Z$ is a subobject of $H^1 (Y-|i_{\Delta } ^* W|; \{b,z \})$, and all that remains is to determine the homomorphism
$$
\mathbb{Q}_p (-1)\to H^2 _{|W|\cup x} (X)
$$
induced by this identification.
Let 
 $\delta :\Ker (\gamma )\to \Coker (\alpha )$ 
denote the connecting homomorphism associated to 
$$
\small
\begin{tikzpicture}
\matrix (m) [matrix of math nodes, row sep=2em,
column sep=1em, text height=1.5ex, text depth=0.25ex]
{ & H^1 (Y;  \{b,z\}  ) & H^2 ( S ; X_1 \! \cup \! X_2 \! \cup \! \Delta ) & H^2 ( S ; X_1 \! \cup \! X_2 ) & 0 \\ 
0 & H^1 (Y-|i_{\Delta } ^* W| ; \{b,z \} ) & H^2 (S -|W| ;X_1 \! \cup \! X_2 \! \cup \! \Delta ) & H^2 (S -|W| ;X_1 \! \cup \! X_2 ). & \\};
\path[->]
(m-1-2) edge[auto] node[auto]{ $\alpha  $ } (m-2-2)
edge[auto] node[auto] {} (m-1-3)
(m-1-3) edge[auto] node[auto]{$\beta  $} (m-2-3)
edge[auto] node[auto] {} (m-1-4)
(m-1-4) edge[auto] node[auto]{$\gamma  $} (m-2-4)
edge[auto] node[auto] {} (m-1-5)
(m-2-1) edge[auto] node[auto]{} (m-2-2)
(m-2-2) edge[auto] node[auto]{} (m-2-3)
(m-2-3) edge[auto] node[auto]{} (m-2-4);
\end{tikzpicture} $$
Then by construction, $E_{Z}$ is isomorphic to the pullback of 
$H^1 (\overline{Y}-|i_{\Delta } ^* W|; \{ b,z \})$ by the homomorphism
$$
\mathbb{Q}_p (-1)\to \Ker (\gamma )\stackrel{\delta }{\longrightarrow }\Coker (\alpha )\to H^2 _{|i_{\Delta } ^* W|}(Y; \{ b,z \} ).
$$
We claim that the diagram 
$$ \small
\begin{tikzpicture}
\matrix (m) [matrix of math nodes, row sep=2em,
column sep=2em, text height=1.5ex, text depth=0.25ex]
{  \Ker (\gamma ) & \Coker (\alpha ) \\
H^2 _{|W|}(S;X_1 \cup X_2 ) & 
H^2 _{|i_{\Delta } ^* W|} (Y; \{ b,z \} ) \\};
\path[->]
(m-2-1) edge[auto] node[auto] {} (m-1-1)
edge[auto] node[auto] {$i_{\Delta } ^* $} (m-2-2)
(m-1-1) edge[auto] node[auto] {$\delta $} (m-1-2)
(m-1-2) edge[auto] node[auto] {} (m-2-2);
\end{tikzpicture} $$
commutes. This follows from the definition of the long exact sequence in cohomology associated to a short exact sequence of sheaves: for example, it is implied by the following lemma, whose proof we sketch.
\begin{Lemma}
For $1\leq i,j\leq 3$, let $I^\bullet _{i,j}$ be complexes of abelian groups, and let
$$ \small
\begin{tikzpicture}
\matrix (m) [matrix of math nodes, row sep=2em,
column sep=2em, text height=1.5ex, text depth=0.25ex]
{& 0 & 0 & 0 & \\
0 & I^\bullet _{1,1} & I^\bullet _{1,2} & I^\bullet _{1,3} & 0 \\
0 & I^\bullet _{2,1} & I^\bullet _{2,2} & I^\bullet _{2,3} & 0  \\
0 & I^\bullet _{3,1} & I^\bullet _{3,2} & I^\bullet _{3,3} & 0  \\
& 0 & 0 & 0 & \\};
\path[->]
(m-1-2) edge[auto] node[auto] { } (m-2-2)
(m-1-3) edge[auto] node[auto] { } (m-2-3)
(m-1-4) edge[auto] node[auto] { } (m-2-4)
(m-2-1) edge[auto] node[auto] {} (m-2-2)
(m-2-2) edge[auto] node[auto] { } (m-2-3)
edge[auto] node[auto] { } (m-3-2)
(m-2-3) edge[auto] node[auto] { } (m-2-4)
edge[auto] node[auto] { } (m-3-3)
(m-2-4) edge[auto] node[auto] { } (m-2-5)
edge[auto] node[auto] { } (m-3-4)
(m-2-4) edge[auto] node[auto] { } (m-2-5)
(m-3-1) edge[auto] node[auto] {} (m-3-2)
(m-3-2) edge[auto] node[auto] { } (m-3-3)
edge[auto] node[auto] { } (m-4-2)
(m-3-3) edge[auto] node[auto] { } (m-3-4)
edge[auto] node[auto] { } (m-4-3)
(m-3-4) edge[auto] node[auto] { } (m-3-5)
edge[auto] node[auto] { } (m-4-4)
(m-3-4) edge[auto] node[auto] { } (m-3-5)
(m-4-1) edge[auto] node[auto] {} (m-4-2)
(m-4-2) edge[auto] node[auto] { } (m-4-3)
edge[auto] node[auto] { } (m-5-2)
(m-4-3) edge[auto] node[auto] { } (m-4-4)
edge[auto] node[auto] { } (m-5-3)
(m-4-4) edge[auto] node[auto] { } (m-4-5)
edge[auto] node[auto] { } (m-5-4)
(m-4-4) edge[auto] node[auto] { } (m-4-5);
\end{tikzpicture} $$
be a commutative diagram of abelian groups with exact columns and rows.
Define
\begin{align*}
J_1 & :=\Ker (H^i (I_{2,3}^{\bullet })\to H^{i+1} (I_{2,1}^{\bullet })), \\
J_2 & :=\Coker (H^{i-1} (I_{3,3}^{\bullet })\to H^{i} (I_{3,1}^{\bullet })), \\
K_1 & :=\Ker (H^i (I_{1,3}^{\bullet })\to H^{i+1} (I_{2,1}^{\bullet })), \\
K_2 & :=\Coker (H^{i-1} (I_{3,3}^{\bullet })\to H^{i+1} (I_{1,1}^{\bullet })).\end{align*}
Let 
$$
\delta :\Ker(J_1 \to H^i (I_{3,3}^{\bullet })) \to \Coker(H^i (I_{2,1}^{\bullet })\to J_2 )
$$
be the connecting homomorphism associated to $$
\begin{tikzpicture}
\matrix (m) [matrix of math nodes, row sep=2em,
column sep=2em, text height=1.5ex, text depth=0.25ex]
{& H^i (I_{2,1}) & H^i (I_{2,2}) & J_1 & 0 \\
0 & J_2 & H^i (I_{3,2}) & H^i (I_{3,3}). \\};
\path[->]
(m-2-1) edge[auto] node[auto] {} (m-2-2)
(m-1-2) edge[auto] node[auto] { } (m-1-3)
edge[auto] node[auto] { } (m-2-2)
(m-1-3) edge[auto] node[auto] { } (m-1-4)
edge[auto] node[auto] { } (m-2-3)
(m-1-4) edge[auto] node[auto] {} (m-1-5)
edge[auto] node[auto] { } (m-2-4)
(m-2-2) edge[auto] node[auto] { } (m-2-3)
(m-2-3) edge[auto] node[auto] { } (m-2-4);
\end{tikzpicture} $$
Then the diagram
$$ \small
\begin{tikzpicture} 
\matrix (m) [matrix of math nodes, row sep=1em,
column sep=2em, text height=1.5ex, text depth=0.25ex]
{ & \Ker(J_1 \to H^i (I_{3,3})) & \Coker(H^i (I_{2,1})\to J_2 ) & \\
K_1 & & & K_2 \\
& H^i (I_{1,3})  &  H^{i+1} (I_{1,1}) & \\};
\path[->]
(m-2-1) edge[auto] node[auto] {} (m-1-2)
edge[auto] node[auto] {} (m-3-2)
(m-1-2) edge[auto] node[auto] {$\delta $} (m-1-3)
(m-1-3) edge[auto] node[auto] {} (m-2-4)
(m-3-2) edge[auto] node[auto] {} (m-3-3)
(m-3-3) edge[auto] node[auto] {} (m-2-4);
\end{tikzpicture} $$
commutes.
\end{Lemma}
\begin{proof}
Let $d_{i,j}^k $ be the differential $I_{i,j}^k \to I_{i,j}^{k+1}$ and let $Z^k _{i,j}=\Ker (d_{i,j}^k )$. 
Consider the following function from $K_1$ and $K_2$: start with $v_1 $ in $K_1 $, lift to $v_2 $ in  $Z^i _{1,3}$, lift that to get $v_3 $ in $I^i _{2,2}$, take differentials to get $ v_4$  in $Z^{i+1}_{2,2}$, check that this can be lifted to $v_5$ in $Z^{i+1}_{1,1}$, take its image in $K_2 $. 
We claim the top and bottom maps from $K_1 $ to $K_2$ are both instances of this construction. In the top map, one starts with an element in $Z^i _{1,3}$, maps it to an element of $Z^i _{2,3}$, lifts it to an element of $Z^i _{2,2}$, maps it down to $Z^i _{3,2}$, lifts it to an element of $Z^i _{3,1}$, lifts that to an element of $I_{2,1}^i $, maps it to an element of $Z^{i+1}_{2,1}$ and finally lifts that to an element of $Z_{1,1}^{i+1}$. In the bottom map, one starts with an element in $Z^i _{1,3}$, lifts it to an element of $I^i _{1,2}$, maps that down to an element of 
$Z^{i+1}_{1,2}$, and then lifts that to an element of $Z^{i+1}_{1,1}$. This proves the claim, since $I^\bullet _{1,2}$ and $I^\bullet _{2,1}$ are both subcomplexes of $I^\bullet _{2,2}$, and the differentials on $I^\bullet _{1,2}$ and $I^\bullet _{2,1}$ are just the restriction of the differential on $I_{2,2}^\bullet $.
\end{proof}
By commutativity of the diagram
 $$
\begin{tikzpicture}
\matrix (m) [matrix of math nodes, row sep=2em,
column sep=2em, text height=1.5ex, text depth=0.25ex]
{  H^2 _{|W|}(S;X_1 \cup X_2 ) & H^2 _{|W|}(S) \\
H^2 _{|i_{\Delta } ^* W|}(Y; \{b,z \}) & H^2 _{|i_{\Delta } ^* W|}(Y) \\};
\path[->]
(m-1-1) edge[auto] node[auto] {} (m-1-2)
edge[auto] node[auto] {$i_{\Delta } ^* $} (m-2-1)
(m-2-1) edge[auto] node[auto] {$\simeq $} (m-2-2)
(m-1-2) edge[auto] node[auto] {$i_{\Delta } ^* $} (m-2-2);
\end{tikzpicture} $$
we deduce that $E_Z$ is isomorphic to the pullback of $H^1 (Y-|i_{\Delta } ^* W|; \{ b,z \})$ by 
$$
\widetilde{\cl }_{i_{\Delta } ^* W} :\mathbb{Q}_p (-1)\to H^2 _{|i_{\Delta } ^* W|}(Y).
$$
Finally, we show that this  implies that the map 
$$
\mathbb{Q}_p (-1)\to \Ker (H^2 _{|i_{\Delta } ^* W|\cup x}(X)\to H^2 (X))
$$
is equal to 
$$\widetilde{\cl }_{i_{\Delta } ^* W-(W.\Delta )x} \to H^2 _{|i_{\Delta } ^* W|\cup x}(X).$$
Via the isomorphism 
$
H^1 (X; \{ b,z\} ) \simeq H^1 (Y; \{ b,z\} ),
$
one obtains an isomorphism
$$
H^2 _{|i_{\Delta } ^* W|}(Y)\simeq \Ker (H^2 _{|i_{\Delta } ^* W|\cup x}(X)\to H^2 (X))
$$
which sends the class of a cycle $\sum d_i (z_i )$ with support in $W\cap Y$ to $\sum d_i (z_i )-(\sum d_i )x$.
 This completes the proof of the theorem.
\end{proof}

\section{$p$-adic heights on hyperelliptic curves}\label{alltheheightcomputations}

In this section, we recall facts about $p$-adic height pairings and use them to relate the height pairing of the cycles $z-b$ and 
$D(b,z)$ to the height pairings arising in Theorems \ref{thm:oldqc} and \ref{biellexample}. We fix a choice of idele class character $\chi $ and an isotropic splitting $s$ of the Hodge filtration on $H^1 _{\dR }(X_{K_{\mathfrak{p}} })$.

By the work of  Besser \cite{Bes99a}, Nekov\'a\v r's $p$-adic height pairing is equal to the $p$-adic height pairing of Coleman and Gross defined in \cite{coleman-gross}. In \cite[$\S 2 $]{bb:crelle}, it is shown that one may extend the Coleman-Gross local height pairing to divisors with non-disjoint support, although as in the case of the real-valued height pairing, such an extension will, in general, depend on a choice of a global tangent vector at each point. As explained in \cite{balakrishnan2013p}, there is a canonical choice of such a tangent vector when $X$ is a hyperelliptic curve with a fixed odd degree model.

We write $h_v (D)$ to mean $h_v (D,D)$, and $h(D)$ to mean $\sum _v h_v (D)$.
When $X=E$ is an elliptic curve with origin $\infty $, for $z$ in $E(K_v )$ we define
$$
h_v (z):=h_v ((z)-(\infty )).
$$
\subsection{Height identities}
Let $X$ be a hyperelliptic curve, and let $w$ denote the hyperelliptic involution on $X$. In this subsection, we briefly review the theory of height pairings on hyperelliptic curves \cite{bb:crelle, bb:heights}.
\begin{Definition}
For a divisor $D$ on $X$,  define $D^+ :=D+w^* D$ and $D^- :=D-w^* D.$
\end{Definition}
\begin{Lemma}
For any divisors $D_1, D_2 \in \Div^0(X)$,
$$
h_v (D_1 ,D_2 )=\frac{1}{4}h_v (D_1 ^+ ,D_2 ^+ )+\frac{1}{4}h_v (D_1 ^- ,D_2 ^- ).
$$
\end{Lemma}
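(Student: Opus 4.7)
The plan is to expand both sides using $\mathbb{Z}$-bilinearity of $h_v$, and then reduce everything to the $w$-invariance of the local Coleman--Gross pairing.

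First I would note that the Coleman--Gross local height pairing extends bilinearly to arbitrary pairs of divisors in $\Div^0(X)$ via the conventions of \cite{bb:crelle} (with the canonical hyperelliptic tangent vector at each point). Applying bilinearity to $D_i^{\pm} = D_i \pm w^{*}D_i$ yields
\begin{align*}
h_v(D_1^+,D_2^+) &= h_v(D_1,D_2) + h_v(D_1,w^{*}D_2) + h_v(w^{*}D_1,D_2) + h_v(w^{*}D_1,w^{*}D_2), \\
h_v(D_1^-,D_2^-) &= h_v(D_1,D_2) - h_v(D_1,w^{*}D_2) - h_v(w^{*}D_1,D_2) + h_v(w^{*}D_1,w^{*}D_2).
\end{align*}
Adding these, the cross terms cancel and we obtain
$$h_v(D_1^+,D_2^+) + h_v(D_1^-,D_2^-) = 2\bigl(h_v(D_1,D_2) + h_v(w^{*}D_1,w^{*}D_2)\bigr).$$

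The remaining step, and the only substantive one, is to show that the local pairing is invariant under the hyperelliptic involution:
$$h_v(w^{*}D_1,w^{*}D_2) = h_v(D_1,D_2).$$
This is a functoriality statement: any automorphism $\sigma$ of $X/K$ preserves $h_v$ provided it preserves both the idele class character (automatic, since $\sigma$ acts trivially on the base) and the chosen isotropic subspace of $H^1_{\dr}(X_{K_{\mathfrak{p}}})$. For the hyperelliptic involution on a hyperelliptic curve with the standard odd degree model, $w$ acts as $-1$ on all of $H^1_{\dr}(X)$ (the holomorphic basis $x^i\,dx/y$ and the chosen complementary basis of differentials of the second kind are all odd under $w$). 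Hence $w$ preserves every subspace of $H^1_{\dr}(X)$, so in particular the isotropic splitting, and therefore the local Coleman--Gross pairing is $w$-invariant. Dividing the displayed identity by $4$ then yields the claim.

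The only potential obstacle is the extension of $h_v$ to divisors with non-disjoint support, since the divisors $D_i$ and $w^{*}D_i$ may share points (namely Weierstrass points). However, the Balakrishnan--Besser extension in \cite{bb:crelle,balakrishnan2013p} based on the canonical hyperelliptic tangent vector is precisely designed to make this extension bilinear, so the argument goes through verbatim.
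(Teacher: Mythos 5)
The paper states this lemma without proof (referring to \cite{bb:crelle,bb:heights} for the theory of height pairings on hyperelliptic curves), so there is no internal argument to compare against; your bilinear expansion and cancellation of cross terms is the natural route and is correct. One small logical gap: you establish $w$-invariance of $h_v$ by checking that $w$ preserves the Hodge splitting, but you then invoke this invariance for the \emph{extended} pairing on divisors with common support, whereas your final paragraph only addresses bilinearity of the extension, not its $w$-invariance. For the extended pairing one also needs compatibility of $w$ with the canonical tangent vectors entering the Balakrishnan--Besser normalisation. This does hold --- at a non-Weierstrass point $z$, $w^{*}$ sends the normalised parameter at $z$ to $(-1)$ times the normalised parameter at $w(z)$, and $\chi_v(-1)=0$ for any continuous $\Q_p$-valued character; at Weierstrass points and at $\infty$ the point is $w$-fixed so there is no discrepancy --- but it is a separate check from the statement about $H^1_{\dr}$, and should be made explicit rather than folded into the appeal to bilinearity of the extension.
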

Part (i) of the next lemma is proved in \cite{balakrishnan2013p} (see (4.3) and the subsequent discussion). Part (ii) also follows straightforwardly from the proof.
\begin{Lemma}\label{anotherheightidentity}
Let $X$ be a hyperelliptic curve of genus $g$, defined by a monic odd degree model $y^2 =f(x)$.
Let $\infty $ denote the point at infinity. \\
(i) Let $z$ be a point of $X$ not equal to $\infty $, with $y(z)\neq 0$. Then 
$$
h_v (z^+ -2\infty)=2\chi _v (y(z))+2\chi _v (2).
$$
(ii) Let $z_1 ,z_2 $ be points of $X$ not equal to $\infty $. Suppose $x(z_1 )\neq x(z_2 )$. Then 
$$
h_v (z_1 ^+ -2\infty,z_2 ^+ -2\infty )=2\chi _v (x(z_1 )-x(z_2 )).
$$
\end{Lemma}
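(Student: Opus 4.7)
The plan is to recognize both identities as instances of the Coleman--Gross formula $h_v(\div(f), D) = \chi_v(f(D))$, appropriately extended to divisors with common support via the canonical tangent vectors on the odd-degree hyperelliptic model. The key observation is that the function $x - c$ on $X$ has divisor $z_c^+ - 2\infty$ for any point $z_c$ above $c$, since it has simple zeros at the two points with $x$-coordinate $c$ and a double pole at $\infty$. Consequently $z^+ - 2\infty = \div(x - x(z))$ is a principal divisor, and the relevant pairings become $h_v(\div(x - x(z_1)), z_2^+ - 2\infty)$ in part (ii), respectively $h_v(\div(x - x(z)), z^+ - 2\infty)$ in part (i).

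The main new content is part (ii). The hypothesis $x(z_1) \neq x(z_2)$ forces the supports of $\div(x - x(z_1))$ and $z_2^+ - 2\infty$ to intersect only at $\infty$. Using bi-additivity, the evaluation at the two finite support points of $z_2^+ - 2\infty$, which are disjoint from the support of $\div(x - x(z_1))$, contributes $\chi_v(x(z_2) - x(z_1)) + \chi_v(x(w(z_2)) - x(z_1)) = 2\chi_v(x(z_2) - x(z_1))$, since $x \circ w = x$. The contribution at $\infty$ is computed via the canonical uniformizer $t$, with respect to which $x$ has leading term $t^{-2}$; hence $x - x(z_1)$ also has leading coefficient $1$ at $\infty$, and the associated contribution is $-2\chi_v(1) = 0$. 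Using that $\chi_v(-1) = 0$ because $\chi_v$ is $\mathbb{Q}_p$-valued, this yields $2\chi_v(x(z_1) - x(z_2))$.

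Part (i) is the computation already recorded in \cite{balakrishnan2013p} around equation (4.3): the full support $\{z, w(z), \infty\}$ is shared, and the same formalism applies using the canonical tangent vectors at the non-Weierstrass points $z$ and $w(z)$ coming from the invariant differential $dx/(2y)$. These produce leading coefficients $2y(z)$ and $-2y(z)$ for $x - x(z)$, while the contribution at $\infty$ vanishes as before, giving $2\chi_v(2y(z)) = 2\chi_v(y(z)) + 2\chi_v(2)$.

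The one delicate ingredient, which I would cite from \cite{bb:crelle} and \cite{balakrishnan2013p} rather than reprove, is the identification of the canonical tangent vectors that extend the Coleman--Gross height to divisors with overlapping support on a hyperelliptic curve, together with the compatibility of this extension with Nekov\'a\v r's pairing. This is the step that could, a priori, cause the computation to misbehave, because different tangent-vector choices at $\infty$ alter the leading coefficient by a unit and in principle could shift the answer by a nonzero $\chi_v$-value. Once the canonical choice is granted, both identities reduce to a mechanical computation of leading coefficients of the linear function $x - x(z_1)$ in the prescribed uniformizers.
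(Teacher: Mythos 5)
Your proof is correct and follows essentially the same route as the paper: both arguments identify $z^+-2\infty$ with $\div(x-x(z))$, use the normalised parameters $(x-x(\cdot))/(2y(\cdot))$ at the affine points and $-y/x^{g+1}$ at $\infty$ (cited from \cite[\S 4]{balakrishnan2013p}), and apply the extended Coleman--Gross formula $h_v(\div(f),D)=\chi_v(\tilde f(D))$ on divisors with overlapping support. The paper simply leaves the leading-coefficient bookkeeping implicit, whereas you have spelled it out at each point of the support.
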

\begin{proof}
As explained in \cite[$\S 4$]{balakrishnan2013p}, one finds that normalised parameters at $z$ and $w(z)$ are given by $x-x(z)/2y(z)$, and that 
$-y/x^{g+1}$ is a normalised parameter at infinity. The lemma now follows from the definition of the Coleman-Gross pairing
on divisors of non-disjoint support.
\end{proof}
\begin{Lemma}\label{doesntmatterwhichformula}
Let $E$ be an elliptic curve
$$
y^2 =x^3 +ax^2 +bx+c.
$$
Then for any $z_1 ,z_2 $ in $E$ both not equal to $\infty $, and with $x(z_1 )\neq x(z_2 )$,
$$ 2h_v (z_1 -\infty )+2h_v (z_2 -\infty )-h_v (z_1 -z_2 )-h_v (z_1 -w(z_2 )) =  2 \chi _v (x(z_1 )-x(z_2 )).$$
\end{Lemma}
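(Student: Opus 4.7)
The plan is to reduce everything to the identities in Lemma \ref{anotherheightidentity} by systematically applying the $+/-$ decomposition $h_v(D)=\tfrac14 h_v(D^+)+\tfrac14 h_v(D^-)$ to each of the divisors appearing. Set $D_1=(z_1)-(z_2)$ and $D_2=(z_1)-(w(z_2))$. The first observation is that $D_1^+=D_2^+=(z_1^+-2\infty)-(z_2^+-2\infty)$, while $D_1^-$ and $D_2^-$ satisfy $D_1^-+D_2^-=2\bigl((z_1)-(w(z_1))\bigr)$ and $D_1^--D_2^-=-2\bigl((z_2)-(w(z_2))\bigr)$.

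Applying the decomposition, expanding $h_v(D_1^+)$ by bilinearity and using the parallelogram identity $h_v(D_1^-)+h_v(D_2^-)=\tfrac12\bigl(h_v(D_1^-+D_2^-)+h_v(D_1^--D_2^-)\bigr)$, one obtains
$$h_v(D_1)+h_v(D_2)=\tfrac12 h_v(D_1^+)+h_v\bigl((z_1)-(w(z_1))\bigr)+h_v\bigl((z_2)-(w(z_2))\bigr).$$
Now the key move is to eliminate the $(z_i)-(w(z_i))$ terms by applying the same decomposition to $h_v(z_i-\infty)$. Since $(z_i-\infty)^+=z_i^+-2\infty$ and $(z_i-\infty)^-=(z_i)-(w(z_i))$, we get $h_v\bigl((z_i)-(w(z_i))\bigr)=4h_v(z_i-\infty)-h_v(z_i^+-2\infty)$.

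Substituting, the terms $h_v(z_i^+-2\infty)$ coming from the two sources combine with the expansion of $h_v(D_1^+)$ by bilinearity to cancel, leaving only
$$h_v(D_1)+h_v(D_2)=2h_v(z_1-\infty)+2h_v(z_2-\infty)-h_v(z_1^+-2\infty,\,z_2^+-2\infty).$$
Lemma \ref{anotherheightidentity}(ii) specialised to $g=1$ identifies the cross-pairing with $2\chi_v(x(z_1)-x(z_2))$, and rearranging gives the claimed formula.

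The only potentially delicate point is bookkeeping for the extended Coleman-Gross pairing in the presence of non-disjoint supports (everything shares $\infty$, and $z_i$ and $w(z_i)$ have the same $x$-coordinate, so one must use the normalised parameter $-y/x^{g+1}$ at $\infty$ and $(x-x(z_i))/2y(z_i)$ at $z_i$, $w(z_i)$, as in the proof of Lemma \ref{anotherheightidentity}). However the bilinearity and the $+/-$ decomposition remain valid for this extension, so the manipulation goes through verbatim. Conceptually the identity is forced: $h_v(z_1-\infty)$ and $h_v(w(z_1)-\infty)$ agree (both equal $\tfrac14 h_v(z_1^+-2\infty)+\tfrac14 h_v((z_1)-(w(z_1)))$), reflecting the fact that the hyperelliptic involution acts as $-1$ on the Jacobian of an elliptic curve, and this symmetry is what makes the asymmetric looking left-hand side collapse to the symmetric expression $2\chi_v(x(z_1)-x(z_2))$.
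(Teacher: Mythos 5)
Your proof is correct and takes essentially the same approach as the paper: both use the $\pm$-decomposition $h_v(D)=\tfrac14 h_v(D^+)+\tfrac14 h_v(D^-)$, bilinearity, the parallelogram law, and conclude via Lemma~\ref{anotherheightidentity}(ii) — the paper just packages this as ``the antisymmetric part of the LHS vanishes and the symmetric part is the cross-pairing,'' whereas you expand $h_v(D_1)+h_v(D_2)$ directly. One small slip in the bookkeeping: in your first displayed intermediate identity the coefficients of $h_v\bigl((z_1)-(w(z_1))\bigr)$ and $h_v\bigl((z_2)-(w(z_2))\bigr)$ should be $\tfrac12$, not $1$, since
$$
h_v(D_1)+h_v(D_2)=\tfrac12 h_v(D_1^+)+\tfrac14\bigl(h_v(D_1^-)+h_v(D_2^-)\bigr)
$$
and the parallelogram law gives $h_v(D_1^-)+h_v(D_2^-)=2h_v(z_1^-)+2h_v(z_2^-)$, so the contribution is $\tfrac12 h_v(z_i^-)$. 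With that factor restored, the substitution $h_v(z_i^-)=4h_v(z_i-\infty)-h_v(z_i^+-2\infty)$ and the bilinear expansion of $h_v(D_1^+)$ do produce your next displayed line exactly, so the rest of the argument goes through unchanged.
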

\begin{proof}
We first break the left hand side into symmetric and antisymmetric parts. The antisymmetric part equals
$$
\frac{1}{2}h_v (z_1 ^- )+\frac{1}{2}h_v (z_2 ^-)-\frac{1}{4}h_v (z^- _1  -z^- _2)-\frac{1}{4}h_v (z^- _1  +z^- _2).
$$
By expanding, this can be seen to be zero. The symmetric part equals 
$$
 \frac{1}{2}h_v (z_1 ^+ -2\infty )+\frac{1}{2}h_v (z_2 ^+ -2\infty )-\frac{1}{2}h_v (z_1 ^+ -z_2 ^+ ).
$$
Expanding, this equals
$$
\frac{1}{2}h_v (z_1 ^+ -2\infty ,z_2 ^+ -2\infty )+\frac{1}{2}h_v (z_2 ^+ -2\infty ,z_1 ^+ -2\infty ),
$$
hence the result now follows from Lemma \ref{anotherheightidentity}.
\end{proof}
\begin{Lemma}\label{height:zwz}
For any $z$ not equal to $\infty $,
$$
h_v (z-\infty ,w(z)-\infty )+h_v (z-\infty ,z-\infty )=\chi _v (2y(z)).
$$
\end{Lemma}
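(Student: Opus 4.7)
The plan is to reduce everything to Lemma~\ref{anotherheightidentity}(i) by exploiting the hyperelliptic symmetric/antisymmetric decomposition of the local height pairing that is recorded just above.

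First I would compute the $w$-action on the divisor $z-\infty$: since $w$ fixes $\infty$, we have $w^*(z-\infty)=w(z)-\infty$, and hence
\[
 (z-\infty)^+ = z^+-2\infty, \qquad (z-\infty)^- = z^-,
\]
while $(w(z)-\infty)^+ = z^+-2\infty$ and $(w(z)-\infty)^-=-z^-$. Applying the identity
$h_v(D_1,D_2)=\tfrac14 h_v(D_1^+,D_2^+)+\tfrac14 h_v(D_1^-,D_2^-)$ to both summands on the left-hand side yields
\[
 h_v(z-\infty,z-\infty) = \tfrac14 h_v(z^+-2\infty)+\tfrac14 h_v(z^-,z^-),
\]
\[
 h_v(z-\infty,w(z)-\infty) = \tfrac14 h_v(z^+-2\infty)-\tfrac14 h_v(z^-,z^-).
\]
Adding the two kills the $z^-$ contribution and leaves $\tfrac12 h_v(z^+-2\infty,z^+-2\infty)$.

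Next I would invoke Lemma~\ref{anotherheightidentity}(i), which gives $h_v(z^+-2\infty)=2\chi_v(y(z))+2\chi_v(2)$. Multiplying by $\tfrac12$ and using additivity of $\chi_v$ on a uniformiser gives $\chi_v(y(z))+\chi_v(2)=\chi_v(2y(z))$, as required.

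The only genuine subtlety is that both pairings on the left are between divisors with non-disjoint support (they share either $z$ or $\infty$), so the decomposition formula and Lemma~\ref{anotherheightidentity}(i) are being used in their extended form from \cite{bb:crelle}; the canonical tangent vector at each point of the fixed odd-degree hyperelliptic model makes this extension unambiguous, so no further work is needed and there is no essential obstacle beyond checking this book-keeping.
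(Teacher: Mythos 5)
Your proof is correct and is essentially the paper's own argument, just with the cancellation of the antisymmetric parts written out explicitly rather than asserted; both reduce to $\tfrac12 h_v(z^+-2\infty)$ and invoke Lemma~\ref{anotherheightidentity}(i).
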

\begin{proof}
The antisymmetric parts of $h_v (z-\infty ,w(z)-\infty )$ and $h_v (z-\infty ,z-\infty )$ cancel out, hence the left hand side is equal to
$
\frac{1}{2}h_v (z^+ -2 \infty ),
$
which equals $\chi _v (2y(z))$ by Lemma \ref{anotherheightidentity}.
\end{proof}
\subsection{Integral points on hyperelliptic curves}
Let $X$ be a hyperelliptic curve given by an equation of the form
$
y^2 =f(x),
$
where $f(x)$ is a monic polynomial in $\mathcal{O}_K [x]$ of degree $2g+1$. Let $Y=X-\infty $. Take $Z$ to be the cycle 
$\Gamma _w =\{ (z,w(z)) \} \subset X\times X$. Let $\{ z_1 ,\ldots ,z_{2g+1} \}$ denote the set of $\overline{K}$ points of $X$ with 
$y$-coordinate zero, and let $W$ denote the divisor $\sum _i z_i $. Let $b$ and $z$ be points of $Y$ with nonzero $y$-coordinate.
Then 
$$\begin{array}{ccc}
i_1 ^* \Gamma _w = w(b), &
i_2 ^* \Gamma _w = w(z), &
i_{\Delta } ^* \Gamma _w = W+\infty,
\end{array}$$
hence $
D(b,z)=W-w(b)-w(z)-(2g-1)\infty .
$
So the class of $A(Y)(b,z)$ is dual to $H _X (z-b,W-w(b)-w(z)-(2g-1)\infty )$, by Theorem \ref{height}.
The following lemma illustrates how Theorem \ref{thm:oldqc} may be deduced from Theorem \ref{height} together with the affine version of Theorem \ref{biellipticformula}.
\begin{Lemma}\label{asintheproof}
For any prime $v$,
$$
h_v (z-b,D(b,z))=h_v (z-\infty )-h_v (b-\infty ).
$$
\end{Lemma}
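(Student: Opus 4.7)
The strategy is to decompose both sides under the hyperelliptic involution $w$ via the identity
$$h_v(D_1,D_2)=\tfrac{1}{4}h_v(D_1^+,D_2^+)+\tfrac{1}{4}h_v(D_1^-,D_2^-)$$
from the preceding lemma, and to match the symmetric and antisymmetric contributions separately. Since $W$ and $\infty$ are fixed by $w$, while $w^*z=w(z)$ and $w^*b=w(b)$, and $\textrm{div}(y)=W-(2g+1)\infty$, I first compute
\begin{align*}
(z-b)^+ &= (z^+-2\infty)-(b^+-2\infty), & (z-b)^- &= z^--b^-,\\
D(b,z)^+ &= 2\,\textrm{div}(y)-(z^+-2\infty)-(b^+-2\infty), & D(b,z)^- &= z^-+b^-.
\end{align*}
Applying the same $\pm$-decomposition to $z-\infty$ gives $(z-\infty)^+=z^+-2\infty$ and $(z-\infty)^-=z^-$, so that $h_v(z-\infty)=\tfrac{1}{4}(h_v(z^+-2\infty)+h_v(z^-))$ and similarly for $b$; it thus suffices to match the $+$ and $-$ contributions individually.

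Throughout I would use that $h_v$ is symmetric on $\textrm{Div}^0(X)$: for $v\nmid p$ this holds by the intersection-theoretic definition, and for $v\mid p$ it is guaranteed by the isotropy of the chosen splitting of the Hodge filtration. For the antisymmetric piece, bilinearity and symmetry immediately yield
$$h_v((z-b)^-,D(b,z)^-)=h_v(z^--b^-,\,z^-+b^-)=h_v(z^-)-h_v(b^-),$$
the cross terms $h_v(z^-,b^-)$ and $h_v(b^-,z^-)$ cancelling. For the symmetric piece, write $A=z^+-2\infty$ and $B=b^+-2\infty$. Expanding $h_v(A-B,\,2\,\textrm{div}(y)-A-B)$ by bilinearity and cancelling $h_v(A,B)$ against $h_v(B,A)$ by symmetry gives
$$h_v((z-b)^+,D(b,z)^+)=2h_v(A-B,\textrm{div}(y))+h_v(B)-h_v(A).$$
Since $A-B=z+w(z)-b-w(b)$ has support disjoint from $\{\infty\}$, the Coleman--Gross pairing with the principal divisor $\textrm{div}(y)$ reduces to evaluation of $y$, giving $h_v(A-B,\textrm{div}(y))=\chi_v(y(z)^2/y(b)^2)=2\chi_v(y(z))-2\chi_v(y(b))$; and by Lemma \ref{anotherheightidentity}(i) the quantity $h_v(A)-h_v(B)$ equals the same expression. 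Substituting shows $h_v((z-b)^+,D(b,z)^+)=h_v(A)-h_v(B)$.

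Combining the symmetric and antisymmetric contributions with weight $\tfrac{1}{4}$ and regrouping by endpoint yields $h_v(z-b,D(b,z))=h_v(z-\infty)-h_v(b-\infty)$. The only real piece of bookkeeping is the identification $D(b,z)^+=2\,\textrm{div}(y)-(z^+-2\infty)-(b^+-2\infty)$, which is what makes Lemma \ref{anotherheightidentity}(i) applicable; once that identification is in hand, the lemma is a few lines of substitution, and no obstacle beyond careful arithmetic with $w^*$ arises.
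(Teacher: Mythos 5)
Your proof is correct, and it takes a genuinely different route from the paper's, though both circle around the same circle of identities. The paper proves this by decomposing the second argument as $D(b,z)=\div(y)+\bigl(2\infty -w(z)-w(b)\bigr)$, evaluating the $\div(y)$ piece as $\chi_v(y(z))-\chi_v(y(b))$, and then handling the remaining term by expanding, using the automorphism invariance $h_v(D_1,D_2)=h_v(wD_1,wD_2)$ to cancel cross terms, and finally invoking Lemma~\ref{height:zwz}. You instead decompose the full pairing into its $w$-symmetric and $w$-antisymmetric parts via the preceding unlabelled lemma, compute the antisymmetric part by a trivial cross-term cancellation, and reduce the symmetric part to Lemma~\ref{anotherheightidentity}(i) together with the principality of $\div(y)$, and you correctly identify $D(b,z)^+=2\,\div(y)-(z^+-2\infty)-(b^+-2\infty)$, which is the one nontrivial piece of bookkeeping. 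A nice feature of your route is that it bypasses Lemma~\ref{height:zwz} entirely (the paper derives that lemma from Lemma~\ref{anotherheightidentity}(i), which you go to directly), and the $\pm$-decomposition makes the cancellation structure more visible at a glance. A small virtue of the paper's route is that it makes the dependence on $\div(y)$ and the function-field evaluation more prominent. You are also right to flag explicitly that the local pairing $h_v$ is symmetric in the sense used (intersection-theoretic for $v\nmid p$, isotropy of the chosen splitting for $v\mid p$); the paper's proof uses the same fact implicitly in the step where it averages $h_v(z-b,\cdot)$ and $h_v(\cdot,z-b)$. Both proofs tacitly assume $b,z$ avoid the Weierstrass locus so the various supports behave, and both rely on the extension of the Coleman--Gross pairing to non-disjoint divisors; this is fine in context.
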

\begin{proof}
First, note that additivity yields
$$h_v (z-b,D(b,z))=h_v (z-b,W-(2g+1)\infty )-h_v (z-b,2\infty -w(z)-w(b)).$$
Since $2(g+1)\infty -W=\div (y)$, the first term is equal to $\chi (y(z))-\chi (y(b))$. For the second term, since 
$z-b$ and $2\infty -w(z)-w(b)$ are disjoint, 
$$h_v (z-b,2\infty -w(z)-w(b))=\frac{1}{2}h_v (z-b,2\infty -w(z)-w(b))+\frac{1}{2}h_v (2\infty -w(z)-w(b),z-b).$$
By additivity,
\begin{align*}
h_v (z-b,2\infty -w(z)-w(b)) &= h_v (z-\infty ,\infty -w(z))+h_v (z-\infty ,\infty -w(b)) \\ 
&\quad +h_v (\infty -b,\infty -w(z))+h_v (\infty -b,\infty -w(b))
\end{align*}
and similarly for $h_v (2\infty -w(z)-w(b),z-b)$. Using the fact that $h_v (D_1 ,D_2 )=h_v (w(D_1 ),w(D_2 ))$, this gives 
$$h_v (z-b,2\infty -w(z)-w(b))=h_v (z-\infty ,\infty -w(z))+h_v (\infty -b,\infty -w(b)).$$
The result now follows from Lemma \ref{height:zwz}.
\end{proof}
\subsection{Rational points on bielliptic curves}\label{7.3}
In this subsection we return to the case where $X$ is a genus 2 curve of the form $
y^2 =x^6 +a_4x^4 +a_2x^2 +a_0,$
and explain how to deduce Theorem \ref{biellexample} from Theorem \ref{biellipticformula}.  Let $h_v $ and $h$ denote (local and global, resp.) heights on $X$, $h_{E_1 ,v}$ and $h_{E_1 }$ heights on $E_1$, and $h_{E_2 ,v}$ and $h_{E_2 }$ heights on $E_2$.  Recall from the introduction the associated elliptic curves 
$$E_1: y^2 =x^3 +a_4x^2 +a_2x+a_0 \qquad \qquad E_2: y^2 =x^3 +a_2x^2 +a_4a_0x+a_0^2$$
 and morphisms $f_i :X\to E_i $.
Define $Z_1 ,Z_2 \subset X\times X$ to be the graphs of the automorphisms $g_1 :(x,y) \mapsto (-x,y)$ and  
$g_2 :(x,y)\mapsto (-x,-y)$ respectively.
 As explained at the end of $\S 6.3$, the fact that the intersection number of $Z_1 -Z_2 $ with $\Delta -X_1 -X_2 $ is zero implies that $Z = Z_1 - Z_2$ defines a quotient of the fundamental group of $\overline{X}$, and a quotient $A(b,z)$  of $A(X)(b,z)$.
Note that 
\begin{align*}
i_1 ^* (Z_1 -Z_2 ) &= g_1 (z)-g_2 (z), \quad i_2 ^* (Z_1 -Z_2 ) = g_1 (b)-g_2 (b), \\
i_{\Delta } ^* (Z_1 -Z_2 ) &= (0,\sqrt{a_0})+(0,-\sqrt{a_0})-\infty -w(\infty ),
\end{align*}
so  $D(b,z)=(0,\sqrt{a_0})+(0,-\sqrt{a_0})-\infty -w(\infty )-g_1 (z)+g_2 (z)-g_1 (b)+g_2 (b).$
The following lemma completes the proof of Theorem \ref{biellexample}.
\begin{Lemma}\label{completingproof}
For any $b$ and $z$ with $x(b)\neq x(z)$ and both not equal to zero or infinity,
\begin{align*}
h_v (z-b,D(b,z)) &= h_{E_1 ,v} (f_1 (z)-\infty )-h_{E_1 ,v} (f_1 (b)-\infty )-h_{E_2 ,v} (f_2 (z)-\infty )\\
&\quad +h_{E_2 ,v} (f_2 (b)-\infty )+ 2\chi (x(b))-2\chi (x(z)).
\end{align*}\end{Lemma}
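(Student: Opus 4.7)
The plan is to express $D(b,z)$ as a combination of pullbacks of degree-zero divisors from $E_1$ and $E_2$ (via the bielliptic maps $f_1,f_2$), plus an integer multiple of a principal divisor, and then to apply the projection formula for the Coleman--Gross / Nekov\'a\v{r} local height pairing. Concretely, set
$$
A_i(p) := f_i^*\bigl(f_i(p) - \infty_{E_i}\bigr).
$$
A direct inspection of $f_1,f_2$ and their ramification shows
$$
A_1(p) = p + g_1(p) - \infty^+ - \infty^-, \qquad A_2(p) = p + g_2(p) - W_0,
$$
where $\infty^\pm$ are the two points of $X$ at infinity and $W_0=(0,\sqrt c)+(0,-\sqrt c)$. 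Indeed $g_1$ fixes $W_0$ and swaps $\infty^\pm$, while $g_2$ fixes $\infty^\pm$ and swaps $W_0$, so $f_1^*\infty_{E_1}=\infty^++\infty^-$ and $f_2^*\infty_{E_2}=W_0$ (both unramified over the relevant fibres), and likewise $f_i^{-1}(f_i(p))=\{p,g_i(p)\}$ for $p$ avoiding the fixed loci.

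The key observation is that $W_0-\infty^+-\infty^- = \div(x)$ on $X$, since the rational function $x$ has simple zeros exactly at $W_0$ and simple poles exactly at $\infty^\pm$. Combining with the two pullback identities gives the clean relation
$$
g_2(p)-g_1(p) \;=\; A_2(p)-A_1(p)+\div(x),
$$
valid for any $p$ away from the stated loci. Substituting this for $p=b$ and $p=z$ in the formula $D(b,z)=(W_0-\infty^+-\infty^-)+(g_2(z)-g_1(z))+(g_2(b)-g_1(b))$ produced just before the lemma, one gets a decomposition of $D(b,z)$ as a (small) integer multiple of $\div(x)$ plus $\bigl(A_2(z)-A_1(z)\bigr)+\bigl(A_2(b)-A_1(b)\bigr)$.

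To evaluate $h_v(z-b,D(b,z))$ I expand by bilinearity and apply the projection formula $h_{X,v}(f_i^*D,E)=h_{E_i,v}(D,(f_i)_*E)$ for the Coleman--Gross pairing. Since $z,b$ lie outside the ramification loci of $f_1$ and $f_2$, one has $(f_i)_*(z-b)=f_i(z)-f_i(b)$, so each $h_v(z-b,A_i(p))$ becomes an honest height pairing on $E_i$. Summed in the symmetric combination $h_v(z-b,A_i(z)+A_i(b))$, the mixed terms $h_{E_i,v}(f_i(z)-\infty,f_i(b)-\infty)$ appear with opposite signs and cancel by symmetry of $h_{E_i,v}$, leaving precisely $h_{E_i,v}(f_i(z)-\infty)-h_{E_i,v}(f_i(b)-\infty)$. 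The principal-divisor contribution is handled by $h_v(z-b,\div(x))=\chi_v(x(z))-\chi_v(x(b))$, which is the defining property of the local height on a principal divisor.

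The main obstacle is bookkeeping: one must carefully track the exact integer multiple of $\div(x)$ appearing in the decomposition of $D(b,z)$, because each unit error there produces a spurious $\chi_v(x(z))-\chi_v(x(b))$ in the final answer, which changes the coefficient of $2\chi_v(x(\cdot))$ in the stated formula. A related softer subtlety is the regularisation of $h_v(z-b,A_i(z))$, whose supports overlap at the point $z$; this is controlled by the fact that one works symmetrically in the pair $(z,b)$, so that the tangent-vector choices needed at the diagonal enter at $z$ and at $b$ with opposite signs and cancel in the combination that actually appears. Once these two pieces of bookkeeping are pinned down, collecting the three contributions (the two elliptic pieces and the $\chi_v\circ x$ piece) yields the stated identity.
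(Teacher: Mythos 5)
Your plan is essentially the paper's: decompose $D(b,z)$ in terms of pullbacks of degree-zero divisors from $E_1,E_2$ plus a multiple of $\div(x)$, and then reduce via the projection formula and bilinearity. The important difference is \emph{which} pullbacks. The paper takes $D_i := w(f_i(z)) + w(f_i(b)) - 2\infty_{E_i}$, so that $f_1^*D_1 = w(z)+g_2(z)+w(b)+g_2(b)-2\infty^+-2\infty^-$ is (generically) disjoint from $\{z,b\}$ and $h_v(z-b,f_i^*D_i)$ is the \emph{unextended} pairing; it then reduces to the self-heights $h_{E_i,v}(f_i(\cdot))$ using the identity $h_v(Z-\infty,w(Z)-\infty)+h_v(Z-\infty,Z-\infty)=\chi_v(2y(Z))$ on $E_i$, with the resulting $\chi_v(y(\cdot))$ terms converting to $\chi_v(x(\cdot))$ via the relation between $y(f_1(z))$, $y(f_2(z))$ and $x(z)$. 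You instead take $A_i(p)=f_i^*(f_i(p)-\infty_{E_i})=p+g_i(p)-\ldots$, which contains $p$ itself in its support, so $h_v(z-b,A_i(z))$ requires the extended (regularised) pairing.

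The gap is in how you handle that regularisation. You flag two pieces of bookkeeping --- the multiple of $\div(x)$ (it is $3$ in your decomposition) and the regularisation at the overlap --- but pin down neither, and the second is where the content lives. Your claim that the tangent-vector contributions ``enter at $z$ and $b$ with opposite signs and cancel'' is not correct: the relevant local quantities at $z$ and at $b$ are not negatives of one another, so they cannot cancel by the symmetry appeal alone. Concretely, the canonical Coleman--Gross local parameters on $X$ and on $E_1$ are incompatible under $f_1:(x,y)\mapsto(x^2,y)$ --- pulling back the canonical parameter at $f_1(z)$ gives $(x-x(z))/2y(z)$ times a unit whose value at $z$ is $2x(z)$ --- so the extended projection formula for $h_v(z-b,A_1(z)+A_1(b))$ acquires a $\chi_v$-correction built from $x(z)$ and $x(b)$ (the analogous $f_2$-correction involves $\chi_v(-2)$ at both $z$ and $b$ and \emph{does} cancel). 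That correction is a $\chi_v(x(z)/x(b))$-term, exactly what is needed to reconcile the coefficient $3$ from your $\div(x)$ decomposition with the coefficient $2$ of $\chi_v(x(\cdot))$ in the statement. The paper's $w$-twisted choice of $D_i$ sidesteps the overlap at $\{z,b\}$ entirely and lets the analogous contribution emerge explicitly from the $\chi_v(y(\cdot))$ terms.
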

\begin{proof}
For $i=1,2$, let $D_i$ denote the divisor $w(f_i (z))+w(f_i (b))-2\infty $.
Then
\begin{align*}
D(b,z) &= -\infty -w(\infty ) + (0,\sqrt{a_0}) + (0,-\sqrt{a_0})-g_1 (z)+g_2 (z)-g_1 (b)+g_2 (b)\\
&=f_1 ^* (D_1 )-f_2 ^* (D_2 ),\end{align*}
hence
\begin{align*}
h_v (z-b,D(b,z)) &= h_{E_1 ,v}(f_1 (z)-f_1 (b),w(f_1 (z))+w(f_1 (b))-2\infty ) \\
&\quad - h_{E_2 ,v}(f_2 (z)-f_2 (b),w(f_2 (z))+w(f_2 (b))-2\infty ).
\end{align*}
As in the proof of Lemma \ref{asintheproof},
 \begin{align*}h_{E_1 ,v}(f_1 (z)-f_1 (b),w(f_1 (z))+w(f_1 (b))-2\infty )& =h_{E_1 ,v} (f_1 (z)-\infty )- h_{E_1 ,v} (f_1 (b)-\infty )\\
 &\quad +\chi (y(f_1 (z)))-\chi (y(f_1 (b)))\end{align*}
and similarly for $f_2 $.
Hence
\begin{align*}h_v (z-b,D(b,z)) &= h_{E_1 ,v} (f_1 (z)-\infty )-h_{E_1 ,v} (f_1 (b)-\infty )-h_{E_2 ,v} (f_2 (z)-\infty )\\
&\quad+ h_{E_2 ,v} (f_2 (b)-\infty ) +\chi (y(f_1 (z))y(f_2 (b))/y(f_1 (b))y(f_2 (z))). \end{align*}
The lemma now follows from recalling that $y(f_1 (z))/y(f_2 (z))=a_0x(z)^2 $. 
\end{proof}
The proof of Theorem \ref{biellexample} now follows from Theorem \ref{height} and Lemma \ref{completingproof}.

\section{Computing $X(K_{\mathfrak{p}})_U$ and $X(K)$}\label{howtocomputethings}

In this section, we explain how to use Theorem \ref{biellexample} in practice and describe the computation of  $X(K_{\mathfrak{p}})_U$, where $X$ is a bielliptic genus 2 curve whose Jacobian has rank 2 and $U$ is associated to the cycle $Z$ as in Section \ref{7.3}. Throughout this section, we will use the phrase ``computing $X(K_{\mathfrak{p}})_U$'' to mean ``computing a finite set containing $X(K_{\mathfrak{p}})_U$'' (though see Remark \ref{rk:sequel_equal}).
We give two numerical examples of $X(K_{\mathfrak{p}})_U$ and further discuss how one might effectively extract $X(K)$ from $X(K_{\mathfrak{p}})_U$. We assume in this section that $p$ is a prime of good reduction for $X$ and of ordinary reduction for $J$.\subsection{An alternative formula for $X(K_{\mathfrak{p}})_U $}
We record the following slight variant of Theorem \ref{biellexample}, which turns the computation into one which can be carried out over two affine patches covering $X(K)$.

\begin{Corollary}\label{cor1} Let $X/K$ be a genus 2 bielliptic curve 
$$
y^2 =x^6 +a_4x^4 +a_2x^2 +a_0
$$ over $K = \Q$ or an imaginary quadratic field, and $E_i$ an elliptic curve as above. Define $Q_i \in E_i (\overline{\Q })$ by 
$Q_1 =(0,\sqrt{a_0 }), Q_2 =(0,a_0 )$.

\noindent (i) For all $v\nmid p$, and $i=1,2$,
$$h_{E_i ,v}(f_i (z)+Q_i )+h_{E_i ,v}(f_i (z)-Q_i )-2h_{E_{3-i} ,v} (f_{3-i} (z))$$
takes only finitely many values on $X(K_v)$, and for almost all $v$ is identically zero. \\
(ii)
Suppose $\rk E_1(K) = \rk E_2(K) = 1$, and let $P_i \in E_i (K)$ be points of infinite order. Let $\alpha_i = \frac{h_{E_i }(P_i)}{[K:\Q]\log _{E_i}(P_i)^2}$.
Let $\Omega _i $ denote the finite set of values taken by 
$$
\sum _{v\nmid p}\left( h_{E_i ,v}(f_i (z)+Q_i )+h_{E_i ,v}(f_i (z)-Q_i )-2h_{E_{3-i}, v } (f_{3-i} (z))\right),
$$
for $(z_v )$ in $\prod _{v\nmid p}X(K_v )$.
Then for $i=1,2$, $X(K)$ is contained in the finite set of $z$ in $X(K_{\mathfrak{p}} )$ satisfying
\begin{align}\label{rho1eq}\rho_i (z) &:=2h_{E_{3-i} ,\mathfrak{p}}(f_{3-i} (z )) -h_{E_i ,\mathfrak{p}}(f_i (z )+Q_i )-h_{E_i ,\mathfrak{p}}(f_i (z )-Q_i )\\
&\qquad -2\alpha_{3-i}\log _{E_{3-i}}(f_{3-i} (z))^2+2\alpha_i (\log _{E_i}(f_i (z))^2  +\log _{E_i}(Q_i )^2 )   \in \Omega_i.\nonumber \end{align}
\end{Corollary}
\begin{proof}
This follows from Theorem \ref{biellexample} together with Lemma \ref{doesntmatterwhichformula}.
\end{proof} 
\subsection{Computing all points in $X(K_\mathfrak{p})_U$}
Using Corollary \ref{cor1}, we calculate $X(K_\mathfrak{p})_U$ as the union of points found in the following two computations:
$$X(K_\mathfrak{p})_U = \{z \in X(K_\mathfrak{p})_U : x(z) \notin \mathfrak{p}, \rho_1(z) \in \Omega_1  \} \cup \{z \in X(K_\mathfrak{p})_U : x(z) \in \mathfrak{p},\rho_2(z) \in \Omega_2\}.$$

We explain in Algorithm \ref{alg1} below how to compute each of the following terms:

 \begin{align*}\rho_1(z) =\underbrace{2h_{E_2 ,\mathfrak{p}}(f_2 (z))}_{\textrm{Steps 7d,e,f}}-\underbrace{h_{E_1 ,\mathfrak{p}}(f_1 (z)+(0,\sqrt{a_0}))}_{\textrm{Steps\, 7b,e,f}}-\underbrace{h_{E_1 ,\mathfrak{p}}(f_1 (z)+(0,-\sqrt{a_0}))}_{\textrm{Steps 7c,e,f }})\\
-\underbrace{2\alpha_2}_{\textrm{Step\, 3}} \underbrace{\log _{E_2}(f_2 (z))^2}_{\textrm{Step\, 7g}}+\underbrace{2\alpha_1}_{\textrm{Step\, 3}} (\underbrace{\log _{E_1}(f_1 (z))^2}_{\textrm{Step\, 7g}}  +\underbrace{\log _{E_1}((0,\sqrt{a_0}))^2}_{\textrm{Step\, 3}}  ) \end{align*} as power series over $K_{\mathfrak{p}}$, which allows us to search for the points $z\in  X(K_{\mathfrak{p}})_U$ that are solutions to the equation $\rho_1(z) = \beta$ for $\beta \in \Omega_1$. 

Essentially all of the terms of $\rho_i(z)$ can be computed in terms of single and double Coleman integrals. By a double Coleman integral we mean an iterated Coleman integral of the form $\int_{z_1}^{z_2} \eta_1\eta_2$ where $\eta_i$ are differential 1-forms. We recall an interpretation of the local height $h_\mathfrak{p}$ as a double Coleman integral, which is used in Algorithm \ref{alg1}:
\begin{Lemma} We have that $h_{E_i ,\mathfrak{p}}(z) = \int_{\infty}^z \omega_0\bar{\omega_0}$, where $\bar{\omega_0}$ is the dual to $\omega_0 =\frac{dx}{2y}$ under the cup product pairing on $H_\dR^1(E_i)$.\end{Lemma}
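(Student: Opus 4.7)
The plan is to obtain this identity by invoking Besser's interpretation \cite{Bes99a} of the Coleman--Gross local height pairing as a double Coleman integral, and then specialising to the elliptic curve with our chosen basis and splitting. First I would recall the Coleman--Gross definition: for disjoint degree zero divisors $D_1,D_2$ on $E_i$,
\[
h_{E_i,\mathfrak{p}}(D_1,D_2)=\int_{D_2}\omega_{D_1},
\]
where $\omega_{D_1}$ is the unique differential of the third kind with residue divisor $D_1$ whose class in $H^1_{\dr}(E_i)$ lies in the complement $W$ to $F^1$ determined by the chosen isotropic splitting $s$. The extension to $D_1=D_2=(z)-(\infty)$ uses the canonical tangent vector convention discussed in $\S\ref{alltheheightcomputations}$, which for the Weierstrass model $y^2=x^3+ax^2+bx+c$ picks the uniformiser $-y/x^2$ at $\infty$ and $(x-x(z))/2y(z)$ at $z$.

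Next I would invoke Besser's identification (\cite{Bes99a}, as made explicit in \cite{bb:heights,bb:crelle}) of the Coleman--Gross local pairing with an iterated Coleman integral against the mixed extension structure on the unipotent path torsor. Under this identification, if $\{\omega_0,\bar\omega_0\}$ is any basis of $H^1_{\dr}(E_i)$ with $\omega_0\in F^1$ and $\bar\omega_0$ dual to $\omega_0$ under the cup product (so that the basis is symplectic), then the Coleman--Gross height of $(z)-(\infty)$ is precisely
\[
h_{E_i,\mathfrak{p}}((z)-(\infty))=\int_{\infty}^{z}\omega_0\bar\omega_0,
\]
where on the right the double Coleman integral is computed in the sense of \cite{bb:crelle}. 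Since the splitting defining $\bar\omega_0$ as cup-product dual to $\omega_0=dx/(2y)$ is exactly the isotropic splitting used to normalise $\omega_{D_1}$, the two recipes produce the same differential class up to $F^1$, and hence the two integrals coincide.

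The only remaining step is to check that the boundary regularisation used to extend the Coleman--Gross pairing to coincident divisors matches the natural endpoint behaviour of the iterated integral on the right. This is carried out in \cite[\S 4]{balakrishnan2013p} in the hyperelliptic setting: with the canonical tangent vector $-y/x^{g+1}$ at $\infty$ and $(x-x(z))/2y(z)$ at $z$, both sides have the same regularised limit, so Lemma \ref{anotherheightidentity} and the iterated integral recipe agree. The main (already-known) content is thus the Besser formula; the lemma is the elliptic specialisation of that formula combined with our chosen basis and splitting conventions, and no genuinely new argument is required beyond checking that these conventions match.
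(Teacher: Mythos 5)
Your proposal is correct and takes essentially the same route as the paper, which simply defers to \cite[\S 4]{balakrishnan2013p} (where the local height $h_\mathfrak{p}((z)-(\infty))$ is denoted $\tau(z)$ and expressed as the double Coleman integral). You have unpacked that reference accurately — Coleman--Gross definition, Besser's comparison with Nekov\'a\v r's height, and the tangent-vector regularisation for coincident divisors — with the only minor imprecision being that the normalisation of $\omega_{D_1}$ is stated via the map $\Psi$ into $H^1_{\dr}$ of the open curve rather than via a literal class in $H^1_{\dr}(E_i)$, but this does not affect the argument.
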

\begin{proof} See \cite[$\S 4 $]{balakrishnan2013p}, where the local height $h_p$ of $z - \infty$ is denoted as $\tau(z)$.\end{proof}

\begin{algorithm}[Computing the set $\{z \in X(K_\mathfrak{p})_U : x(z) \notin \mathfrak{p}, \rho_1(z) \in \Omega_1  \}$] $\;$\\
\label{alg1}\noindent\textbf{Input:} Genus 2 curve $X/K$ defined by an equation $y^2 = x^6 + a_4x^4 + a_2x^2 + a_0$ such that the corresponding $E_1(K),E_2(K)$ each have Mordell-Weil rank 1, a good ordinary prime $p$, finite set of values $\Omega_1$.\\
\textbf{Output:} The following subset of $X(K_{\mathfrak{p}})_U: \{z \in X(K_\mathfrak{p})_U : x(z) \notin \mathfrak{p}, \rho_1(z) \in \Omega_1  \}$. 
\begin{enumerate}
\item\label{step1} Compute points $P_1 \in E_{1}(K)$ and  $P_2 \in E_{2}(K)$ of infinite order.
\item\label{step2} Compute global $p$-adic heights $h_{E_1 }(P_1)$ and $h_{E_2 }(P_2)$, using minimal models for $E_1, E_2$, using the algorithm of Mazur, Stein, and Tate \cite{mazur-stein-tate}.
\item\label{step4} Compute $$\quad \qquad \log_{E_1}((0,\sqrt{a_0}))^2 = \left(\int_{\infty}^{(0,\sqrt{a_0})} \omega_0\right)^2, \quad \alpha_i = \frac{h_{E_i }(P_i)}{[K:\Q](\int_{\infty}^{P_i} \omega_0)^2}, \quad i = 1,2.$$
\item\label{barom} Compute the cup product pairing between elements in $H_\dR^1(E_1)$ and also between elements in $H_\dR^1(E_2)$; use this to compute $\bar{\omega_0}$ for $E_1$ and $\bar{\omega_0}$  for $E_2$ to write  $h_{E_i ,\mathfrak{p}} = \int \omega_0\bar{\omega_0}$.
\item\label{almost all} Enumerate the list of points $\mathcal{D} = X(\F_\mathfrak{p}) \setminus \{(0,\pm \overline{\sqrt{a_0})}\}$. 
\item Initialise an empty set $R$.
\item For each $D \in \mathcal{D}$:\begin{enumerate}
\item Compute $Q$, a lift of $D$, and a local coordinate $(x(t),y(t))$ at $Q$. 
\item\label{P1param} Compute $S_1 := f_1(Q) + (0,\sqrt{a_0})$.  Likewise compute $f_1((x(t),y(t))) + (0,\sqrt{a_0})$, which sends the local coordinate to this residue disk. 
\item\label{P2param} Compute $f_1(Q) - (0,\sqrt{a_0})$.  Likewise compute $f_1((x(t),y(t)))- (0,\sqrt{a_0})$, which gives a local coordinate in the residue disk. 
\item\label{P4param} Compute $f_2(Q)$.  We have $f_2(x(t)) = (x(t))^{-2}$ gives the $x$-coordinate of a local coordinate in the residue disk of $f_2(Q)$.
\item\label{8e} Compute the following local heights at $\mathfrak{p}$ of the points in Steps \ref{P1param} - \ref{P4param}: $h_{E_1, \fp}(f_1(Q) + (0,\sqrt{a_0})), h_{E_1, \fp}(f_1(Q) - (0,\sqrt{a_0})), h_{E_2,\fp}(f_2(Q))$.
\item\label{correction} Using Step \ref{barom}, for each of the points in Steps \ref{P1param} - \ref{P4param},  use the local coordinates computed to calculate a power series expansion  of  $h_{E_i ,\mathfrak{p}}$ in the disk of the respective point, using Step \ref{8e} to set the global constant of integration.  
\item\label{8g} Compute $\log_{E_i}(f_i(Q)(t)) = \log_{E_i}(f_i(Q)) + \int_{f_i(Q)(t)} \omega_0$.
\item Finally, let $\rho_1(t)$ be the appropriately weighted sum of contributions from Steps \ref{step4}, \ref{correction}, and \ref{8g}, as in Equation \ref{rho1eq}.
\item For each $\beta \in \Omega_1$, compute the set of roots of $\rho_1(t) = \beta$. For each root $r$, append $X(x(r),y(r)) \in X(K_{\mathfrak{p}})$ with multiplicity to the set $R$.
\end{enumerate}
\item Output $R$,  the subset $\{z \in X(K_\mathfrak{p})_U : x(z) \notin \mathfrak{p}, \rho_1(z) \in \Omega_1  \} \subset X(K_{\mathfrak{p}})_U$.
\end{enumerate}
\end{algorithm}
\begin{Remark} We clarify Step \ref{correction} above: e.g., for $S_1$, first compute a local coordinate $S_1(t)  = (x_1(t),y_1(t))$ at $S_1$ (if $S_1$ is non-Weierstrass, $x_1(t) = t + x(S_1)$) and use it to compute $ h_{E_1 ,\mathfrak{p}}(S_1(t)) = h_{E_1 ,\mathfrak{p}}(S_1) - 2\left(\int_{S_1}^{S_1(t)}\omega_0\bom_0+ \int_{S_1}^{S_1(t)} \omega_0\int_{\infty}^{S_1}\bom_0\right).$   Then use the parametrisation computed in Step~\ref{P1param} so that this power series in the disk of $S_1$ uses the correct parameter, that induced by the local coordinate at $Q$. Likewise, in Step \ref{8g} one must also be careful about local coordinates: one way is to compute a local coordinate $f_i(Q)(t) = (x_i(t),y_i(t))$ at $f_i(Q)$ to  compute $\int_{f_i(Q)(t)} \omega_0$, then correct the parametrisation so that this power series within the disk of $f_i(Q)$ uses the correct parameter, that induced by the local coordinate at $Q$, as in Step~\ref{correction}.  \end{Remark}

The computation of $\rho_2(z) \in \Omega_2$ is carried out in an analogous manner and only involves the two residue disks of $X(K_\mathfrak{p})$ not considered in Step \ref{almost all} of Algorithm \ref{alg1}. Putting this together gives an algorithm to compute $X(K_\mathfrak{p})_U$.

\begin{Remark} For a discussion of the $p$-adic precision in the computation of Coleman integrals resulting in a provably correct number of terms in the corresponding power series expansions, see \cite[$\S 3.3$]{BBM:Computing}. Applying Strassman's theorem gives an upper bound on the number of roots, which may be found explicitly using \texttt{gp}.\end{Remark}

We now give two examples illustrating the algorithm to compute $X(K_\mathfrak{p})_U$, carried out using Sage \cite{sage}.   
\subsection{Example 1: Rational points on a genus 2 bielliptic curve with rank 2 Jacobian}\label{ex1}
We compute $X(\mathbb{Q})$, where $X$ is the genus 2 curve $$X: y^2 = x^6 -2x^4 - x^2 + 1.$$
Let $E_1 $ and $E_2 $ be the corresponding elliptic curves, which each have Mordell-Weil rank 1 over $\mathbb{Q}$ 
and integral $j$-invariant. On $E_1$, the point $P_1 =(0,1)$ is of infinite order, and on $E_2$, the point $P_2 =(0,1)$ is 
of infinite order.   We fix a branch of the $p$-adic logarithm $\log_p$ and take $\chi $ to be the cyclotomic character, normalised so that $\chi _p (z)=\log _p (z)$ and for $v\neq p$, 
$\chi _v (z)=-v(z)\log _p (v)$.  Note that, with respect to this choice of character, our local height is twice the local height as defined in \cite{silverman1988computing}. Moreover, $E_1$ and $E_2$ each have good ordinary reduction at $p = 3$. We determine a finite set containing $X(\mathbb{Q}_3 )_2 $ and use this to determine $X(\mathbb{Q})$ exactly. 
We are not able to determine whether $X(\mathbb{Q}_3 )_2 =X(\mathbb{Q})$. 

\subsubsection{Local contributions away from $p$}
The curve $X$ has bad reduction at $2$, bad but potential good reduction at $7$, and good reduction at all other primes. Hence to 
determine the set $\Omega $ we need to determine the possible values of 
$$h_{E_1 ,2}(f_1 (z))-h_{E_2 ,2}(f_2 (z))-2\chi _2 (x(z)).$$
First note that $X(\mathbb{Q}_2 )$ has no $\mathbb{Q}_2 $ points whose $x$-coordinate 
has valuation zero (e.g. by checking mod 8). It will turn out that the above functions can (each) only take two possible values, corresponding 
to $v(x)>0$ and $v(x)<0$, where $v$ denotes the $2$-adic valuation.
We compute local heights on $E_1 $.  The equation given above for $E_1 $ is minimal at 2. $E_1 $ has type II reduction, which means that the singular point mod 2 does not lift to a $\mathbb{Q}_2 $ point. Hence 
$
h_{E_1 ,2} (f_1 (z))=2\max \{0,-v_2 (x(z))\}\log _p (2).
$

We compute local heights on $E_2$.  The equation given for $E_2 $ is minimal, and it has type IV reduction. The unique singular point of the special fibre is $(0,1)$. By Silverman \cite{silverman1988computing}, the local height at points $(x_0 ,y_0 )$ of bad reduction is given by
$$
h_{E_2 ,2} ((x_0 ,y_0 ))=-\frac{2}{3}(1+v(y_0 ))\log _p (2).
$$ 
Hence the possible values of $h_{E_2 ,2}(f_2 (z))$ are $2\max \{0,v(x(z))\}\log _p (2)$ when the valuation of $x(f_2(z))$ is positive, and 
$-\frac{2}{3}\log _p (2)$ when the valuation of $x(f_2(z))$ is negative.
Hence $$
h_{E_1 ,2}(f_1 (z))-h_{E_2 ,2}(f_2 (z))-2\chi _2 (x(z))=\left\{
\begin{array}{cc}
 0 & v(x(z))<0 \\
-\frac{2}{3}\log _p (2) & v(x(z))>0. 
\end{array} \right.
$$ 
Finally $h_{E_2 ,2}((0,1))=-\frac{2}{3}\log _p (2)$ and $h_{E_1 ,2}((0,1))=0$.

Hence by Lemma \ref{doesntmatterwhichformula}, $$h_{E_1 ,2} (f_1 (z)+(0,1))+h_{E_1 ,2} (f_1 (z)-(0,1))-2h_{E_2 ,2} (f_2 (z))=\left\{
\begin{array}{cc}
 0 & v(x(z))<0 \\
 \frac{4}{3}\log _p (2) & v(x(z))>0 \\
\end{array} \right.
$$ 
$$
h_{E_2 ,2} (f_2 (z)+(0,1))+h_{E_2 ,2} (f_2 (z)-(0,1))-2h_{E_1 ,2} (f_1 (z))=\left\{
\begin{array}{cc}
 -\frac{4}{3}\log _p (2) & v(x(z))<0 \\
 -\frac{8}{3}\log _p (2) & v(x(z))>0. 
\end{array} \right.
$$ 
We deduce $\Omega _1 =\{ 0,\frac{4}{3}\log _p (2) \}$ and $\Omega _2 =\{-\frac{4}{3}\log _p (3),-\frac{8}{3}\log _p (3) \} $.
\subsubsection{Local contributions at $p = 3$}
By Corollary \ref{cor1}, to determine the $X(\Q _3 )_U$, we need to carry out Algorithm~\ref{alg1} twice: for the residue disks corresponding to $\overline{\infty^{\pm}}$, we find $z$ with $\rho_1(z) \in \Omega_1$, and for the residue disks corresponding to $\overline{(0, \pm 1)}$, we find $z$ with $\rho_2(z) \in \Omega_2$. This gives $X(\Q_3)_U$:
\small
\begin{center}
 \begin{tabular}{||c |  r  |c | l ||}
    \hline
 $X(\F_3)$ & recovered $x(z)$ in residue disk & $z \in X(\Q)$ & $\rho_i(z) = \beta$ \\
  \hline
 $\overline{\infty^{\pm}}$ & $3^{-1} + 1 + 3^{3} + 2 \cdot 3^{4} +  O(3^{6})$ &  & $\rho_1(z) = 0$ \\
 & $2 \cdot 3^{-1} + 1 + 2 \cdot 3 + 2 \cdot 3^{2} + 3^{3} + 2 \cdot 3^{5} +  O(3^{6})$ &  & $\rho_1(z) = 0$ \\
 & $\infty^{\pm}$ & $\infty^{\pm}$ & $\rho_1(z) = \frac{4}{3}\log_3(2)$ \\
  $\overline{(0, \pm 1)}$ & $2 \cdot 3 + 3^{2} + 3^{3} + 3^{4} + 3^{5} +O(3^6)$ & $(\frac{3}{2}, \pm \frac{1}{8})$ & $\rho_2(z) = -\frac{8}{3}\log_3(2)$\\ 
 & $3 + 3^{2} + 3^{3} + 3^{4} + 3^{5} +  O(3^{6})$ & $(-\frac{3}{2}, \pm \frac{1}{8})$ &  $\rho_2(z) = -\frac{8}{3}\log_3(2)$ \\
 & $O(3^6)$ & $(0, \pm 1)$ &  $\rho_2(z) = -\frac{4}{3}\log_3(2)$ \\

 \hline
\end{tabular}
\end{center}
\normalsize
Code illustrating Algorithm~\ref{alg1}, producing this set of points, is available at \cite{qc1code}.

\begin{Theorem} We have $X(\Q) = \left\{(0,\pm 1), \left(\frac{3}{2}, \pm \frac{1}{8}\right), \left(-\frac{3}{2}, \pm \frac{1}{8}\right), \infty^{\pm}\right\}.$\end{Theorem}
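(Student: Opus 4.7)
The plan is to combine the computation above, which explicitly produces $X(\Q_3)_U$ as a finite superset of $X(\Q)$, with a separate argument that rules out the ``extra'' $3$-adic points appearing in the residue disks of $\infty^{\pm}$. The rational points exhibited in the statement already account for every point of $X(\Q_3)_U$ except those with $x$-coordinate $3^{-1}+1+3^{3}+2\cdot 3^{4}+O(3^{6})$ or $2\cdot 3^{-1}+1+2\cdot 3+2\cdot 3^{2}+3^{3}+2\cdot 3^{5}+O(3^{6})$; so the task reduces to showing that neither of these $x$-coordinates comes from a rational point of $X$.

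The strategy is to push each such point forward to $E_1$ via $f_1 : (x,y)\mapsto (x^2,y)$ and then exploit the $3$-adic formal group of $E_1$. First I would verify two elementary facts: that $|E_1(\F_3)|=3$ by direct enumeration, and that $P_1=(0,1)$ (a generator of the free part of $E_1(\Q)$) reduces to a point of order $3$. Granting these, any $R\in E_1(\Q)$ lying in the kernel of reduction at $3$ must satisfy $R=3kP_1+T$ for some $k\in\Z$ and $T\in E_1(\Q)_{\mathrm{tors}}$. Applying the formal logarithm $\log_{E_1}\colon E_1(\Q_3)\to\Q_3$, which annihilates torsion, then gives $\log_{E_1}(R)=3k\log_{E_1}(P_1)$.

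Next I would compare $3$-adic valuations on the two sides. A direct computation yields $3P_1=(-8/81,-757/729)$, hence the uniformiser value $t(3P_1)=-x/y=-72/757$ has $v_3=2$, so $v_3(\log_{E_1}(3P_1))=2$ and therefore $v_3(\log_{E_1}(P_1))=1$. Conversely, for each extra $3$-adic point $z$ the equation $y^2=x^6-2x^4-x^2+1$ together with $v_3(x(z))=-1$ forces $v_3(y(z))=-3$, so that $t(f_1(z))=-x(z)^2/y(z)$ has valuation $1$ and therefore $v_3(\log_{E_1}(f_1(z)))=1$. If $z$ were rational we would need $v_3(\log_{E_1}(f_1(z)))=v_3(3k\log_{E_1}(P_1))\geq 2$ for some $k\in\Z$, with $k=0$ excluded because $\log_{E_1}(f_1(z))\neq 0$. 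This contradiction eliminates both extra $x$-values, and combining with the six rational points already identified in $X(\Q_3)_U$ yields $X(\Q)$ as stated.

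The main step is conceptual rather than computational: one has to recognise that, because $E_1(\Q)$ has rank $1$ and $P_1$ reduces to a point whose order matches $|E_1(\F_3)|$, the intersection of the kernel of reduction with $E_1(\Q)$ is $3\Z\cdot P_1$ up to torsion, which converts the $3$-divisibility trap into a mismatch of formal-logarithm valuations. Once this observation is in place, all of the numerical inputs used above---the cardinality of $E_1(\F_3)$, the coordinates of $3P_1$, and the valuations $v_3(x(z)),v_3(y(z))$ for the extra $3$-adic points---are entirely routine.
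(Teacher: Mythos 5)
Your formal-group argument is essentially the paper's, merely rephrased: the paper works with $v_3$ of the $x$-coordinate of points in the kernel of reduction, observing that $v_3\bigl(x(3P_1)\bigr)=-4$ forces $v_3\bigl(x(Q)\bigr)\le -4$ for every nonzero $Q\in\langle 3P_1\rangle$, while the image $f_1(z)$ of an extra point has $v_3\bigl(x(f_1(z))\bigr)=-2$. You instead track $v_3(\log_{E_1})$; since for a point $Q$ in the formal group one has $v_3\bigl(x(Q)\bigr)=-2\,v_3\bigl(\log_{E_1}(Q)\bigr)$, the two bookkeepings are equivalent, and your valuation computations (giving $v_3(\log_{E_1}(P_1))=1$ versus $v_3(\log_{E_1}(f_1(z)))=1$) are correct. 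Your version is if anything a touch cleaner since the formal logarithm annihilates torsion explicitly, though the paper sidesteps this by asserting $E_1(\Q)=\Z P_1$.

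However, there is a step in the paper's argument that you omit and that is needed to make the deduction rigorous. The set $X(\Q_3)_U$ is produced as the zero set of $p$-adic power series computed to finite precision, and some of those zeros appear with multiplicity two, namely those in the disks of $\infty^{\pm}$ and of $(0,\pm 1)$. Before one can assert, as you do, that ``the rational points exhibited in the statement already account for every point of $X(\Q_3)_U$ except those with $x$-coordinate $3^{-1}+\cdots$ or $2\cdot 3^{-1}+\cdots$,'' one must rule out the possibility that a multiplicity-two root masks two distinct simple roots that are merely $3$-adically close, one of which would be a genuine extra point not yet eliminated. The paper handles this by noting that $\rho_1$ and $\rho_2$ are even functions of the local coordinate, so the power series has an honest double zero at each of the rational points $\infty^{\pm}$ and $(0,\pm 1)$, and the numerically observed multiplicity two is accounted for exactly. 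Without this (or an equivalent check), the reduction to the two $x$-values you treat is unjustified, so your proof as written has a gap; it is easily repaired by inserting the parity argument or by verifying that the roots found to the given precision are simple away from the rational points and double at them.
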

\begin{proof}We wish to compute $X(\Q)$ from $X(\Q_3)_U$. To do this, we must do two things: prove that the points in $X(\Q_3)_U$ which do not appear to be rational actually are not rational and check the multiplicities of all recovered points, to rule out the possibility that the table collapses multiple points that are just 3-adically close to the points in the table to the indicated precision. We start with the second task.  Our computation shows that the solution $x(z) = O(3^6)$ occurs as a root of $\rho(z) = -\frac{4}{3}\log_3(2)$ with multiplicity two, which gives the known global points $(0,\pm 1)$ and  two points $3$-adically close to $(0,\pm 1)$. Likewise, solving $\rho(z) = \frac{4}{3}\log_3(2)$ yields $\infty^{\pm}$ on $X$ and  two points 3-adically close to $\infty^{\pm}$. The other points in the table, however, occur as roots with multiplicity 1. Note that $\rho(z)$ is an even function, so by considering the local expansion of $\rho$ at each of the global points $(0,1), (0,-1), \infty^+, \infty^-$, we see that its power series expansion must have a global double root at each of these points.

Now we show that the ``extra'' $\Q_3$ points recovered in the disks of $\infty^{\pm}$ cannot be rational, for the following formal group consideration. Consider $z \in X(\Q_3)$ with $v_3(x(z)) = -1$. Then the corresponding point $f_1(z)$ on $E_1$ has $v_3(x(f_1(z))) = -2$. However, note that $E_1(\F_3)$ has order 3 and $E_1(\Q)$ is generated by $P$, where $P = (0,1)$. Thus the smallest multiple of $P$ in the formal group is $3P = (-\frac{8}{81}, -\frac{757}{729})$, which implies that the $v_3(x(Q)) \leq -4 $ for any $Q \in \langle 3P \rangle$. So $f_1(z)$ cannot be rational and thus $z\not\in X(\Q)$.  Thus we conclude
$X(\Q) = \left\{(0,\pm 1), \left(\frac{3}{2}, \pm \frac{1}{8}\right), \left(-\frac{3}{2}, \pm \frac{1}{8}\right), \infty^{\pm}\right\}.$\end{proof}

\subsection{Example 2: $X_0(37)(\Q(i))$}\label{ex2}
Over $\mathbb{Q}$, the modular curve $X_0(37)$ has the model $y^2 = -x^6-9x^4-11x^2+37$. Recall that $X_0(37)$ has good reduction away from $37$.  For convenience, we make the change of variables $(x,y) \mapsto (ix,y)$ so that we take as our working model $$X:y^2 =x^6 -9x^4 +11x^2 +37.$$  Let $J$ denote the Jacobian of $X$. We have $\rk J(\Q) = \rk J_0(37)(K) = 2$.  We thank Daniels and Lozano-Robledo \cite{corrx037} for bringing this example to our attention.

In this  subsection, we construct finite sets of $\mathfrak{p}$-adic points containing $X(K_{\mathfrak{p}})_2$ for various primes $\mathfrak{p}$. Using the Mordell-Weil sieve, as carried out by J. Steffen M\"{u}ller (described in Appendix \ref{steffenappendix}), this is then used to determine $X(K)$.  We work with the following models of $E_1$ and $E_2$: \begin{equation*}
E_1: y^2 =x^3 -16x+16 \qquad \qquad E_2: y^2 =x^3 -x^2 -373x+2813, \end{equation*}
with maps $f_i $ from $X$ to $E_i $ that are given by sending $(x,y)$ to $(x^2 -3,y)$ and $(37x^{-2}+4,37x^{-3})$, respectively.

We have $\rk E_1(K) = \rk E_2(K) = 1$ and we take $P_1 = (0,4) \in E_1 (K)$ and $P_2 = (4,37) \in E_2 (K)$ as our points of infinite order.  We use primes $p$ which are good, ordinary, and, so that we work over $\Q_p$ and not a quadratic extension, split in $K$ and $\Q(\sqrt{37})$: we take $p= 41, 73,$ and $101$. For each of these primes $p$, we choose a prime $\mathfrak{p}$ lying above it in $\mathcal{O}_K$, and take $\chi$ to be a non-trivial idele class character of $K$ which is trivial on $\mathcal{O}_{\overline{\mathfrak{p}}}^\times $. We normalise $\chi$ so that $\chi _{37}(37)=-\log _p (37)$.

\subsubsection{Local calculations at $37$}

In this subsection we prove that for all $b,z\in X(\mathbb{Q}_{37})$ with $x(z)$ and $x(b)$ not equal to infinity,
\begin{align*}&h_{E_1 ,37}(f_1 (z))-h_{E_1 ,37}(f_1 (b))-h_{E_2 ,37}(f_2 (z))\\
& +h_{E_2 ,37}(f_2 (b))+2\chi _{37}(x(z))-2\chi _{37}(x(b))=0.\end{align*}
Recall that by Lemma \ref{completingproof}, this is equivalent to the statement that the inertia subgroup of $G_{\mathbb{Q}_{37}}$ acts trivially on $A(b,z)$.
In \cite{dogra:thesis} this is proved directly. As that proof involves other tools we do not want to introduce, we shall prove this by determining the local heights explicitly.
\begin{Lemma}For all $z$ in $X(\mathbb{Q}_{37})$, we have\\
(i) $h_{E_1 ,37}(f_1 (z))=2\chi _{37}(x(z))$. \\
(ii) $h_{E_2 ,37}(f_2 (z))=\frac{2}{3}\chi _{37} (37).$
\end{Lemma}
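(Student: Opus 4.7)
The plan is to determine the Kodaira reduction types of $E_1$ and $E_2$ at $37$ and then invoke the standard formula for the local $p$-adic height on an elliptic curve with multiplicative reduction. A discriminant computation gives $\Delta_{E_1} = 2^{12}\cdot 37$ and $\Delta_{E_2} = 2^{12}\cdot 37^3$, with $c_4(E_i)$ a $37$-adic unit in both cases, forcing Kodaira types $I_1$ for $E_1$ and $I_3$ for $E_2$. To distinguish split from non-split reduction I would locate the nodes of the mod-$37$ reductions (at $(20, 0)$ for $E_1$ and $(4, 0)$ for $E_2$) and examine the tangent cones, which read $v^2 \equiv 23u^2$ and $v^2 \equiv 11u^2$ respectively. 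Since $\left(\frac{23}{37}\right) = -1$ while $11 \equiv 14^2 \pmod{37}$, $E_1$ has non-split multiplicative reduction with trivial Tamagawa group, while $E_2$ has split multiplicative reduction with Tamagawa group $\Z/3\Z$.

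For part (i), the triviality of the Tamagawa group of $E_1$ means that every $\Q_{37}$-point of $E_1$ lies on the identity component of the N\'eron model, so the Coleman--Gross local height collapses to the good-reduction formula $h_{E_1, 37}(P) = -\min\{0, v_{37}(x(P))\}\log_p(37)$, as derived in \cite{balakrishnan2013p}. Using $y^2 = x^6 - 9x^4 + 11x^2 + 37$ one sees that $v_{37}(x(z)) \leq 0$ for every $z \in X(\Q_{37})$, since otherwise the right hand side would have odd $37$-adic valuation. Substituting $x(f_1(z)) = x(z)^2 - 3$ then gives $v_{37}(x(f_1(z))) = 2 v_{37}(x(z))$, so $h_{E_1, 37}(f_1(z)) = -2 v_{37}(x(z))\log_p(37) = 2\chi_{37}(x(z))$, proving (i).

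For part (ii), a valuation computation shows that $f_2(z) = (4 + 37 x(z)^{-2}, 37 y(z) x(z)^{-3})$ always reduces to the singular point $(4, 0)$ of $E_2$ modulo $37$: indeed $v_{37}(4 + 37 x(z)^{-2}) = 0$ and $v_{37}(37 y(z) x(z)^{-3}) \geq 1$ whenever $v_{37}(x(z)) \leq 0$. Since any point of $E_2(\Q_{37})$ whose Weierstrass reduction is the singular point lies on a non-identity component of the N\'eron model, the component index satisfies $c(f_2(z)) \in \{1, 2\} \subset \Z/3\Z$ for every $z$. Because the $x$-coordinate of $f_2(z)$ is a $37$-adic unit, the good-reduction contribution vanishes, and the local height formula for split $I_n$ yields
$$h_{E_2, 37}(f_2(z)) = \frac{c(3-c)}{3}\chi_{37}(37) = \frac{2}{3}\chi_{37}(37),$$
since $c(3-c) = 2$ for either $c \in \{1, 2\}$.

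The main bookkeeping obstacle is aligning the sign and normalization conventions between the classical N\'eron local height (as in Silverman's AAEC) and the Coleman--Gross $p$-adic local height used here; once these are matched, the proof reduces to the Kodaira-type and valuation computations above. A secondary point worth double-checking is that in the split $I_3$ case the symmetry $c \leftrightarrow n-c$ makes the actual choice between the two non-identity components irrelevant for the height, which is precisely why the answer $\tfrac{2}{3}\chi_{37}(37)$ is uniform across all of $X(\Q_{37})$.
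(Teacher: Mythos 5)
Your proof is correct and follows essentially the same strategy as the paper's: determine the Kodaira types of $E_1$ and $E_2$ at $37$ (using that $v_{37}(\Delta)=1$ and $3$ respectively, with $c_4$ a unit), observe that no $z\in X(\Q_{37})$ has $v_{37}(x(z))>0$ (by parity of $v_{37}(y^2)$), and then invoke Silverman's local-height formulas. The only cosmetic difference is in (i): the paper notes directly that $v_{37}(\Delta_{E_1})=1$ forces the minimal Weierstrass model to already be regular, so every $\Z_{37}$-point reduces to a smooth Weierstrass point, while you phrase the same fact via the triviality of the component group $\Phi(\F_{37})$ for $I_1$; these are equivalent, and your computation of the Legendre symbol $\bigl(\tfrac{23}{37}\bigr)$ to decide split vs.~non-split is harmless but unnecessary for $I_1$, where $\Phi$ is trivial either way. (A tiny lacuna: $v_{37}\bigl(x(z)^2-3\bigr)=2v_{37}(x(z))$ is only valid when $v_{37}(x(z))<0$; when $v_{37}(x(z))=0$ both sides of the identity in (i) vanish, so the conclusion still holds.)
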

\begin{proof}
Note that there are no $\mathbb{Q}_{37}$-points of $X$ for which $x(z)$ has positive 37-adic valuation.
The Weierstrass equations given for $E_1$ and $E_2$ are both minimal at 37. The Weierstrass equation for $E_1 $ is also regular, hence all $\mathbb{Q}_{37}$-points 
are points of good reduction. This establishes part (i). 
The elliptic curve $E_2 $ has split multiplicative reduction of type I3. The singular point of $E_{2}(\mathbb{F}_{37})$ is $(4,0)$, and all points of $E_{2,\mathbb{Q}_{37}}$ in the image of $X(\mathbb{Q}_{37})$ reduce to this point. By Silverman's algorithm \cite[Theorem 5.2]{silverman1988computing}, we deduce that for all $z$ in $X(\mathbb{Q}_{37})$, we have
$h_{E_2 ,37}(f_2 (z))=\frac{2}{3}\chi _{37}(37).$
This completes
the proof of part (ii).
\end{proof}

By Lemmas \ref{doesntmatterwhichformula} and \ref{completingproof}, this gives
$\Omega _1 = \{\frac{4}{3}\log _p (37) \}$ and $\Omega _2 =\{-\frac{2}{3}\log _p (37) \}$.

Hence $X(K_{\mathfrak{p}})_U$ may be computed by determining the solutions to
\begin{align*}
\rho _i (z) &= 2h_{E_{3-i} ,\mathfrak{p}}(f_{3-i} (z))-h_{E_i ,\mathfrak{p}}(f_i (z)+Q_i )-h_{E_i ,\mathfrak{p}}(f_i (z)-Q_i ) \\
&\qquad -2\alpha _{3-i} h_{E_{3-i} }(f_{3-i} (z))+2\alpha _i (h_{E_i }(f_i (z))+\log_{E_i}(Q_i )^2)\in \Omega _i,
\end{align*}
where $Q_1 =(-3,\sqrt{37})$ and $Q_2 =(4,37)$.

We computed finite sets containing $X(\Q_{41})_U, X(\Q_{73})_U,$ and $X(\Q_{101})_U$ using the methods of the paper, using a mild adaptation of the code in \cite{qc1code}. In the tables below, for each disk corresponding to the four choices $\overline{(\pm x,\pm y)}$ we give details for the disk corresponding to $\overline{(x,y)}$ with $x,y < \frac{p}{2}$.  We fix an identification $X(K_\mathfrak{p}) \simeq X(\Q_p)$. 

Here is data for $X(\Q_{41})_U$: 
\small
\begin{center}
\begin{tabular}{||c |  r  |c  ||}
    \hline
$X(\F_{41})$  & recovered $x(z)$ in residue disk  & $z \in X(K)$   \\
  \hline
 $\overline{(1,9)}$ & $1 + 16 \cdot 41 + 23 \cdot 41^{2} + 5 \cdot 41^{3} + 23 \cdot 41^{4} + O(41^5)$  &  \\ 
  & $1 + 6 \cdot 41 + 23 \cdot 41^{2} + 30 \cdot 41^{3} + 14 \cdot 41^{4} + O(41^5)$ &  \\
  $\overline{(2,1)}$ & $2 + O(41^5)$ & $(2,1)$ \\ 
  & $2 + 19 \cdot 41 + 36 \cdot 41^{2} + 15 \cdot 41^{3} + 26 \cdot 41^{4} +O(41^5)$ &  \\
  $\overline{(4,18)}$ &  &   \\ 
  $\overline{(5,12)}$ & $5 + 25 \cdot 41 + 26 \cdot 41^{2} + 26 \cdot 41^{3} + 31 \cdot 41^{4} +O(41^5)$ &  \\ 
  & $5 + 14 \cdot 41 + 12 \cdot 41^{3} + 33 \cdot 41^{4} +O(41^5)$&   \\
  $\overline{(6,1)}$ &  $6 + 18 \cdot 41^{2} + 31 \cdot 41^{3} + 6 \cdot 41^{4} +O(41^5)$&    \\ 
  & $6 + 30 \cdot 41 + 35 \cdot 41^{2} + 11 \cdot 41^{3} + O(41^5)$ &  \\
  $\overline{(7,15)}$ &  &     \\ 
  $\overline{(9,4)}$ & $9 + 9 \cdot 41 + 34 \cdot 41^{2} + 22 \cdot 41^{3} + 24 \cdot 41^{4} +O(41^5)$ & $(i,4)$   \\ 
  & $9 + 39 \cdot 41 + 14 \cdot 41^{2} + 6 \cdot 41^{3} + 17 \cdot 41^{4} +O(41^5)$ &  \\
  $\overline{(12,5)}$ &  &   \\ 
   $\overline{(13,19)}$ & $13 + 10 \cdot 41 + 2 \cdot 41^{2} + 15 \cdot 41^{3} + 29 \cdot 41^{4} +O(41^5)$ &  \\ 
  & $13 + 7 \cdot 41 + 8 \cdot 41^{2} + 32 \cdot 41^{3} + 14 \cdot 41^{4} + O(41^5)$ & \\
   $\overline{(16,1)}$ &$16 + 13 \cdot 41 + 6 \cdot 41^{3} + 18 \cdot 41^{4} +O(41^5)$  &  \\ 
  & $16 + 12 \cdot 41 + 8 \cdot 41^{2} + 9 \cdot 41^{3} + 32 \cdot 41^{4} +O(41^5)$ &   \\
   $\overline{(17,20)}$ & $17 + 24 \cdot 41 + 37 \cdot 41^{2} + 16 \cdot 41^{3} + 28 \cdot 41^{4} +O(41^5)$  &  \\ 
  & $17 + 19 \cdot 41 + 20 \cdot 41^{2} + 7 \cdot 41^{3} + 7 \cdot 41^{4} +O(41^5)$ &  \\
   $\overline{(18,20)}$ &  $18 + 3 \cdot 41 + 7 \cdot 41^{2} + 9 \cdot 41^{3} + 38 \cdot 41^{4} +O(41^5)$ &   \\ 
  & $18 + 41 + 34 \cdot 41^{2} + 3 \cdot 41^{3} + 32 \cdot 41^{4} +O(41^5)$ &    \\
   $\overline{(19,3)}$ &  &   \\ 
  $\overline{(20,6)}$ & $20 + 7 \cdot 41 + 40 \cdot 41^{2} + 22 \cdot 41^{3} + 7 \cdot 41^{4} +O(41^5)$ &   \\ 
  & $20 + 23 \cdot 41 + 26 \cdot 41^{2} + 17 \cdot 41^{3} + 22 \cdot 41^{4} +O(41^5)$ &  \\
 $\overline{\infty^{+}}$ & $\infty^+$  & $\infty^+$   \\
   $\overline{(0,18)}$ &  $32 \cdot 41 + 13 \cdot 41^{2} + 16 \cdot 41^{3} + 8 \cdot 41^{4} +O(41^5)$& \\ 
  & $9 \cdot 41 + 27 \cdot 41^{2} + 24 \cdot 41^{3} + 32 \cdot 41^{4} +O(41^5)$&   \\
 \hline
\end{tabular}
\end{center}

Here we compute $X(\Q_{73})_U$:
\begin{center}
 \begin{tabular}{||c |  r  |c  ||}
    \hline
$X(\F_{73})$  & recovered $x(z)$ in residue disk & $z \in X(K)$ (or $X(\Q(\sqrt{3}))$) \\
  \hline
 $\overline{(2,1)}$ &$2 + 61 \cdot 73 + 50 \cdot 73^{2} + 71 \cdot 73^{3} + 56 \cdot 73^{4} + O(73^{5})$ &  \\ 
  & $2 + O(73^{5}) $& $(2,1)$  \\
  $\overline{(5,26)}$ & $5 + 63 \cdot 73 + 4 \cdot 73^{2} + 42 \cdot 73^{3} + 25 \cdot 73^{4} + O(73^{5})$  &   \\ 
  & $5 + 39 \cdot 73 + 65 \cdot 73^{2} + 33 \cdot 73^{3} + 60 \cdot 73^{4} + O(73^{5})$ &   \\
  $\overline{(7,16)}$ & $7 + 62 \cdot 73 + 31 \cdot 73^{2} + 33 \cdot 73^{3} + 44 \cdot 73^{4} + O(73^{5})$  &   \\ 
  & $7 + 29 \cdot 73 + 67 \cdot 73^{2} + 69 \cdot 73^{3} + 17 \cdot 73^{4} + O(73^{5})$ &   \\
  $\overline{(9,34)}$ &  &    \\ 
  $\overline{(10,30)}$ & $10 + 53 \cdot 73 + 35 \cdot 73^{2} + 21 \cdot 73^{3} + 67 \cdot 73^{4} + O(73^{5})$ &    \\ 
  & $10 + 39 \cdot 73 + 40 \cdot 73^{2} + 17 \cdot 73^{3} + 59 \cdot 73^{4} + O(73^{5})$ &  \\
  $\overline{(18,17)}$ &  &    \\ 
  $\overline{(19,2)}$ &  &    \\ 
  $\overline{(20,15)}$ &  &    \\ 
  $\overline{(21,4)}$ & $21 + 17 \cdot 73 + 70 \cdot 73^{2} + 42 \cdot 73^{3} + 18 \cdot 73^{4} + O(73^{5})$ &    \\ 
  & $21 + 52 \cdot 73 + 67 \cdot 73^{2} + 20 \cdot 73^{3} + 27 \cdot 73^{4} + O(73^{5})$ & $(\sqrt{3},4)$   \\
  $\overline{(23,31)}$ & $23 + 18 \cdot 73 + 59 \cdot 73^{2} + 23 \cdot 73^{3} + 2 \cdot 73^{4} + O(73^{5})$  &  \\ 
  & $23 + 70 \cdot 73 + 53 \cdot 73^{2} + 21 \cdot 73^{3} + 50 \cdot 73^{4} + O(73^{5})$ &  \\
   $\overline{(25,25)}$ &  &   \\ 
         $\overline{(27,4)}$ & $27 + 62 \cdot 73 + 28 \cdot 73^{2} + 56 \cdot 73^{3} + 58 \cdot 73^{4}+ O(73^{5})$ & $(i,4)$  \\ 
            & $27 + 24 \cdot 73 + 30 \cdot 73^{2} + 20 \cdot 73^{3} + 65 \cdot 73^{4} + O(73^{5})$ &  \\

         \hline
\end{tabular}
\end{center}

\begin{center}
 \begin{tabular}{||c |  r  |c  ||}
  \hline 
$X(\F_{73})$  & recovered $x(z)$ in residue disk  & $z \in X(K)$ \\
  \hline 
    $\overline{(29,8)}$ & $29 + 70 \cdot 73 + 21 \cdot 73^{2} + 56 \cdot 73^{3} + 5 \cdot 73^{4} + O(73^{5})$  &  \\ 
  & $29 + 34 \cdot 73 + 42 \cdot 73^{2} + 19 \cdot 73^{3} + 54 \cdot 73^{4}+ O(73^{5})$ & \\
   $\overline{(30,20)}$ &  &   \\ 
    $\overline{(36,17)}$ & $36 + 70 \cdot 73 + 19 \cdot 73^{2} + 11 \cdot 73^{3} + 54 \cdot 73^{4} + O(73^{5})$ &    \\ 
  & $36 + 32 \cdot 73 + 23 \cdot 73^{2} + 23 \cdot 73^{3} + 28 \cdot 73^{4}+ O(73^{5})$&  \\
 $\overline{\infty^{+}}$ &$\infty^{+}$  & $\infty^{+}$    \\
    $\overline{(0,16)}$ & $61 \cdot 73 + 63 \cdot 73^{2} + 51 \cdot 73^{3} + 16 \cdot 73^{4} + O(73^{5})$ &   \\ 
  & $12 \cdot 73 + 9 \cdot 73^{2} + 21 \cdot 73^{3} + 56 \cdot 73^{4} + O(73^{5})$ &   \\
 \hline
\end{tabular}
\end{center}

Here we compute $X(\Q_{101})_U$:
\small
\begin{center}
 \begin{tabular}{||c |  r  |c  ||}
    \hline
$X(\F_{101})$  & recovered $x(z)$ in residue disk  & $z \in X(K)$  \\
  \hline
 $\overline{(2,1)}$ & $2 + O(101^{7})$ & $(2,1)$  \\ 
  & $2 + 38 \cdot 101 + 11 \cdot 101^{2} + 99 \cdot 101^{3} + 26 \cdot 101^{4} + O(101^{5})$ &  \\
  $\overline{(8,36)}$ & $8 + 90 \cdot 101 + 39 \cdot 101^{2} + 80 \cdot 101^{3} + 70 \cdot 101^{4} + O(101^{5})$ &   \\ 
  & $8 + 40 \cdot 101 + 84 \cdot 101^{2} + 74 \cdot 101^{3} + 15 \cdot 101^{4} + O(101^{5})$ &    \\
  $\overline{(10,4)}$ & $10 + 5 \cdot 101 + 29 \cdot 101^{2} + 66 \cdot 101^{3} + 10 \cdot 101^{4} + O(101^{5})$  & $(i,4)$   \\ 
  & $10 + 49 \cdot 101 + 80 \cdot 101^{2} + 74 \cdot 101^{3} + 8 \cdot 101^{4} + O(101^{5})$ & \\
  $\overline{(12,7)}$ & $12 + 12 \cdot 101 + 95 \cdot 101^{2} + 55 \cdot 101^{3} + 48 \cdot 101^{4} + O(101^{5})$  &    \\ 
  & $12 + 36 \cdot 101 + 62 \cdot 101^{2} + 97 \cdot 101^{3} + 27 \cdot 101^{4} + O(101^{5})$ &   \\
  $\overline{(14,21)}$ &  $14 + 62 \cdot 101 + 62 \cdot 101^{2} + 41 \cdot 101^{3} + 51 \cdot 101^{4} + O(101^{5})$ &    \\ 
  & $14 + 80 \cdot 101 + 72 \cdot 101^{2} + 32 \cdot 101^{3} + 75 \cdot 101^{4} + O(101^{5})$ &  \\
  $\overline{(15,11)}$ &  &   \\ 
  $\overline{(17,18)}$ & $17 + 65 \cdot 101 + 37 \cdot 101^{2} + 80 \cdot 101^{3} + 45 \cdot 101^{4} + O(101^{5})$  &    \\ 
  & $17 + 50 \cdot 101 + 61 \cdot 101^{2} + 89 \cdot 101^{3} + 61 \cdot 101^{4} + O(101^{5})$ & \\
   $\overline{(18,45)}$ &  &    \\ 
  $\overline{(20,47)}$ &  &    \\ 
  $\overline{(22,3)}$ & $22 + 59 \cdot 101 + 78 \cdot 101^{2} + 43 \cdot 101^{3} + 53 \cdot 101^{4} + O(101^{5})$ &   \\ 
    & $22 + 96 \cdot 101 + 29 \cdot 101^{2} + 43 \cdot 101^{3} + 86 \cdot 101^{4} + O(101^{5})$ &  \\
   $\overline{(24,19)}$ &  &   \\ 
      $\overline{(27,39)}$ &  &  \\ 
  $\overline{(28,37)}$ & $28 + 30 \cdot 101 + 83 \cdot 101^{2} + 5 \cdot 101^{3} + 23 \cdot 101^{4} + O(101^{5})$ &   \\ 
      &$28 + 37 \cdot 101 + 24 \cdot 101^{2} + 78 \cdot 101^{3} + 35 \cdot 101^{4} + O(101^{5})$ & \\
  $\overline{(30,46)}$ &  &   \\ 
  $\overline{(31,23)}$ & $31 + 23 \cdot 101 + 11 \cdot 101^{2} + 67 \cdot 101^{3} + 39 \cdot 101^{4} + O(101^{5})$  &   \\ 
   & $31 + 29 \cdot 101 + 68 \cdot 101^{2} + 29 \cdot 101^{3} + 24 \cdot 101^{4} + O(101^{5})$ &  \\
  $\overline{(34,45)}$ & $34 + 91 \cdot 101 + 46 \cdot 101^{2} + 28 \cdot 101^{3} + 34 \cdot 101^{4} + O(101^{5})$ &   \\ 
  &$34 + 51 \cdot 101 + 73 \cdot 101^{2} + 34 \cdot 101^{3} + 14 \cdot 101^{4} + O(101^{5})$ &  \\
   $\overline{(37,22)}$ &  &   \\ 
   $\overline{(38,28)}$ &  &   \\ 
   $\overline{(39,46)}$ & $39 + 76 \cdot 101 + 86 \cdot 101^{2} + 18 \cdot 101^{3} + 64 \cdot 101^{4} + O(101^{5})$  &   \\ 
  & $39 + 31 \cdot 101 + 43 \cdot 101^{2} + 10 \cdot 101^{3} + 48 \cdot 101^{4} + O(101^{5})$ & \\
       $\overline{(46,6)}$ &  &   \\ 
     $\overline{(47,32)}$ &  &  \\ 
    $\overline{(48,27)}$ & $48 + 43 \cdot 101 + 100 \cdot 101^{2} + 47 \cdot 101^{3} + 19 \cdot 101^{4}+ O(101^{5})$ &   \\ 
  & $48 + 21 \cdot 101 + 38 \cdot 101^{2} + 80 \cdot 101^{3} + 95 \cdot 101^{4} + O(101^{5})$ &  \\
  $\overline{(50,5)}$ & $50 + 59 \cdot 101 + 19 \cdot 101^{2} + 64 \cdot 101^{3} + 36 \cdot 101^{4} + O(101^{5})$  &    \\
 & $50 + 74 \cdot 101 + 69 \cdot 101^{2} + 80 \cdot 101^{3} + 21 \cdot 101^{4} + O(101^{5})$ &   \\
   $\overline{\infty^{+}}$ & $\infty^+$ & $\infty^+$   \\
    $\overline{(0,21)}$ &  &   \\
    \hline
\end{tabular}
\end{center}
\normalsize

Using a slightly modified Mordell-Weil sieve (see Appendix \ref{steffenappendix}) on the sets $X(\Q_{41})_U, X(\Q_{73})_U,$ and $X(\Q_{101})_U$, one may determine the $K$-rational points exactly.

\begin{Theorem}\label{x037pts}We have $X_0(37)(\Q(i))= \{(\pm 2, \pm 1)
,(\pm i,  \pm 4), \infty^{\pm} \}$. \end{Theorem} 

\begin{Remark}We note that the computation of $X(\Q_{73})_U$ recovered the points $(\pm \sqrt{-3}, \pm 4) \in X_0(37)(\Q(\sqrt{-3}))$ as well.\end{Remark}

\subsection*{Acknowledgements}It is a pleasure to thank Minhyong Kim for countless enlightening conversations, Ben Moonen and Michael Stoll for helpful suggestions, Harris Daniels and \'Alvaro Lozano-Robledo for suggesting that we try the example in $\S\ref{ex2}$, and Steffen M\"{u}ller for carrying out the Mordell-Weil sieve computation, as described in the appendix. We also thank Steffen M\"{u}ller and the anonymous referees for their numerous valuable comments on earlier versions of this manuscript. Part of this paper builds on material in the thesis of the second author; he is very grateful to his examiners Guido Kings and Victor Flynn for several suggestions which have improved the present work. JSB was supported by NSF grant DMS-1702196 and the Clare Boothe Luce Professorship (Henry Luce Foundation). ND was supported by the EPSRC and by NWO/DIAMANT grant number 613.009.031.
\appendix

\section{An elementary approach to Theorem \ref{biellexample}}\label{naive}
 We give a proof of Theorem \ref{biellexample}(i) in the spirit of \cite{balakrishnan2013p}. Namely, we give an elementary proof of the following proposition:
 \begin{Proposition}\label{thm:neat}
For almost all primes $l$,
   \begin{equation}\label{verykey}
  2\lambda _l (f_1 (z))-\lambda _l (f_2 (z)-(0,1))-\lambda _l (f_2 (z)+(0,1))
 \end{equation}
is zero, and for all $l\neq p$ it can only take finitely many values.
 \end{Proposition}
This result will follow straightforwardly from standard facts about local heights (see \cite[\S VI]{silverman:aec2}).
\begin{Theorem}
Let $y^2 +a_1 xy +a_3 y=x^3 +a_2 x^2 +a_4 x +a_6$ be a Weierstrass equation with $\mathbb{Z}_l $-coefficients.\\
(i): At all points $z$ of good reduction with respect to this model, 
\begin{equation}\nonumber
 h(z)=\frac{1}{2}\min \{v_l (x(z)),0\} \log _p (l).
\end{equation}
(ii): For any $z$ not in $E[2]$, 
\begin{equation}\nonumber
 h([2]z)=4h(z)+v_l (2y(z)+a_1 x(z)+a_3 ) \log _p (l).
\end{equation}
\end{Theorem}
\begin{proof}[Proof of Proposition \ref{thm:neat}]
First suppose that $l$ is an odd prime of good reduction for the models $y^2 =x^3 +bx^2 +ax+1$ and $y^2 =x^3 +ax^2 +bx+1$. Let $z$ be a rational point of $X$, and let
$w=(x_0 ,y_0 ):=f_2 (z)$ be its image in $E_2$. We want to estimate the height of the points $w_1 =z-(0,1)$ and $w_2 =z-(0,-1)$. 
 Let $w_1 =(x_1 ,y_1 )$ and $w_2 =(x_2 ,y_2 )$. Then by an application of the addition formula, we see $x_1 =\frac{b x_0 +2 -2y_0 }{x_0 ^2 }$ and $x_2 =\frac{b x_0 +2 +2y_0 }{x_0 ^2 }$. 
 If $x_0 \in \mathbb{Z}_l ^\times $, then $\min (v_l (x_1 ),0)=0$. If $x_0$ has negative valuation, then it is easy to see $v_l (x_1 )$ and $v_l (x_2 )$ are positive. Finally, if $l|x_0 $, 
 then either $y_0 \equiv 1$ or $y_0 \equiv -1 \mod l$. In the first case, $v_l (x_2 )=-2v_l (x_0 )$. 
 For $x_1 $, note that
 \begin{align*}
y_0  &\equiv 1 +\frac{1}{2}(bx_0 +ax_0 ^2 )-\frac{1}{8}(bx_0 +ax_0 ^2 )^2 \mod x_0 ^3\\
&\equiv 1+\frac{1}{2}bx_0 -\frac{1}{8}(b^2 -4a)x_0 ^2,
\end{align*}
and hence $v_l (x_2 )=0$, since $b^2 -4a$ is in $\mathbb{Z}_l ^\times $. The second case is identical, swapping $x_1 $ and $x_2 $.
 
 Hence in all cases, we have
 \begin{equation}\nonumber
2\max \{ v_l (x_0 ),0 \} +\min \{ v_l (x_1 ),0 \} +\min \{ v_l (x_2 )\}=0.
 \end{equation}

It follows that at all primes $l$ at which $y^2 =x^3 +ax^2 +bx+1 $ defines a smooth $\mathbb{Z}_l $-model, (\ref{verykey}) is identically zero, since 
\begin{equation}\nonumber
\max \{ v_l (x_0 ),0\}=-\min \{v_l (x(f_1 (z),0\}.
\end{equation}
Now suppose that the Weierstrass equations  $y^2 =x^3 +bx^2 +ax+1$ and $y^2 =x^3 +ax^2 +bx+1$
do not define smooth $\mathbb{Z}_l$-models. After passing to a finite extension $K|\mathbb{Q}_l $ we find a new Weierstrass equation over 
$\mathcal{O}_K $ with only finitely many points of bad reduction, such that the origin is a point of good reduction. Hence using part (ii) of the theorem stated above, 
$\lambda _l (z)-\frac{1}{2}v_l  (x_F (z))\log _p (l)$ can only take finitely many values, where $x_F (z)$ denotes the $x$-coordinate of $z$ with respect to the new Weierstrass equation $F$. However, 
whenever elliptic curves $E_1 $ and $E_2 $ defined by Weierstrass equations $F_1 (x_1 ,y_1 )=0$ and $F_2 (x_2 ,y_2 )=0$ are isomorphic, there is an isomorphism of the form
\begin{align*}
x_2 &=\alpha x_1 +\beta,\\
y_2 &=\gamma y_1 +\delta x_1 +\epsilon
\end{align*}
(see \cite{liu2002algebraic}, Corollary 7.4.33), hence $\lambda _v (z)-\frac{1}{2}v_l (x_F (z))\log _p (l)$ can only take finitely many values.

\end{proof}

\section{Applying the Mordell-Weil sieve, by J. Steffen M\"{u}ller}\label{steffenappendix}
\subsection*{The Mordell-Weil sieve}
Let $K$ be a number field with ring of integers $\cO_K$ and let $X/K$ be a smooth projective 
curve of genus $g\ge 2$ with Jacobian $J/K$ of rank $r = \rk(J/K)$. Fix an embedding
$\iota:X \hookrightarrow J$ defined over $K$.
The Mordell-Weil sieve is a technique for obtaining information about $K$-rational points on
$X$ by combining information about the image of $X(k_v)$ inside $J(k_v)$ under $\iota$ for several
primes $v$ of $\cO_K$, where $k_v$ is the residue field at $v$. It was introduced by
Scharaschkin~\cite{Scharaschkin:Thesis}; further information on the case $K=\Q$ can be
found, for instance, in~\cite{Bruin-Stoll:MWSieve} and~\cite{PSS:Twists}.
Siksek~\cite{Siksek:ECNF} describes a variant of the Mordell-Weil sieve over number fields which is adapted to work well with
his explicit Chabauty method over number fields introduced in loc. cit., see
also~\S \ref{samir}.

The general idea of the Mordell-Weil sieve is as follows: Suppose for simplicity that there are no nontrivial $K$-torsion
points on $J$ (see~\cite[Remark~6.1]{BBM:Computing} on how to remove this assumption).
Also suppose that we know generators $P_1,\ldots,P_r$ of $J(K)$. Let $M>1$ be an integer and
let $C_M \subset J(K)/MJ(K)$ be a set of residue classes $c$ for which we want to show that the image
of $X(K)$ under $\iota$ does not map to $c$ under the canonical epimorphism $\pi:J(K)\to
J(K)/MJ(K)$.
Let $S$ be a finite set of primes of $\cO_K$ such that $X$ has good reduction at these primes and consider
the commutative diagram
\[
    \xymatrix{
    X(K)\ar@{->}[r]^-{\pi\circ\iota}  \ar@{->}[d]&
    J(K)/MJ(K)\ar@{->}[d]^{\alpha_S}\\
    \prod_{v\in S}X(k_v)\ar@{->}[r]_-{\beta_S}&\prod_{v\in S} J(k_v)/MJ(k_v)\,.\\
} \]
Here $\alpha_S = (\alpha_v)_{v \in S}$ and $\beta_S = (\beta_v)_{v \in S}$, where
$\alpha_v$ is induced by reduction $J(K)\to J(k_v)$ and $\beta_v =\pi_v\circ \iota_v$ is
the composition of the canonical epimorphism $\pi_v:J(k_v)\to J(k_v)/MJ(k_v)$ and the
embedding $\iota_v \colon X(k_v) \hookrightarrow J(k_v)$.
To prove that $\pi(\iota(X(K))) \cap C_M = \emptyset$ it suffices to show that
\[
    \alpha_S(C_M) \cap \im(\beta_S)=\emptyset\,.
\]
One can also include information at bad primes and ``deep'' information,
see~\cite{Bruin-Stoll:MWSieve}.

Now suppose that $P_1,\ldots,P_r\in J(K)$ only generate a subgroup $G$ of
$J(K)$ of finite index.
It is often difficult to deduce generators of $J(K)$ from $G$;
in fact, it is not known how this can be done in practice when $r>0$ and $g>3$.
Instead one typically proceeds by first saturating $G$ at small primes and then
pretending that $G=J(K)$. The final step is to show that the
orders $\#J(k_v)$ are coprime to the index $(J(K):G)$ for all $v \in
S$, which implies that $G$ and $J(K)$ have the same image in $J(k_v)$ for all $v \in S$.

Sometimes, however, it is advantageous to work directly with a subgroup $G$, which is
known to be {\em not} saturated. In this case, one can use the following strategy, suggested by Besser. 
Suppose that $v \in S$ is a prime such that 
$D := \gcd\left(\#J(k_v),(J(K):G)\right) >1$. Let $q_1,\ldots,q_s$ be
the primes dividing $D$.
For $i \in \{1,\ldots,s\}$ we let $\ell_i = v_{q_i}(\#J(k_v))$ and set $n = \prod^s_{i=1}q_i^{\ell_i}$.
Then the reduction of $nJ(K):=\{nP\,:\,P \in J(K)\}$ is contained in the reduction of $G$ modulo $v$, so
the multiple $n \iota_v(P)$ is contained in the reduction of $G$ at $v$ for every $P \in X(k_v)$.
Therefore, instead of checking whether $\beta_v(P) \in \alpha_v(C_M)$, we check whether
$n\beta_v(P) \in \alpha_v(nC_M)$, where $nC_M = \{nc\,:\, c \in C_M\}$.

\subsection*{Quadratic Chabauty and the Mordell-Weil sieve}\label{sec:qcmws}
The $p$-adic techniques described in the main part of the present text give
congruence conditions for rational points on
$X$.
More precisely, they can be used to compute, for good ordinary primes $\mathfrak{p}$ of
$\cO_K$, a finite subset $X(K_{\mathfrak{p}})_U \subset X(K_{\mathfrak{p}})$ (to finite precision) which contains
$X(K)$.
After identifying the rational points among $X(K_{\mathfrak{p}})_U$, 
one is left with the task of showing that the remaining elements do not
correspond to rational points.

It is discussed in~\cite{BBM:Computing} how to use the Mordell-Weil sieve for this
purpose:
Suppose for now that $J(K)_{\mathrm{tors}}$ is trivial and that $P_1,\ldots,P_r$ generate $J(K)$.
Using linearity of single Coleman integrals, we can compute, for every point $z \in X(K_{\mathfrak{p}})_U$, a tuple
$(\tilde{a}_1,\ldots,\tilde{a}_r) \in \left(\Z/p^N\Z\right)^r$ so that if $\iota(z) =
a_1P_1+\ldots+a_rP_r$ for integers $a_1,\ldots,a_r$, then $a_i \equiv \tilde{a}_i \pmod{p^N}$ for all $i \in
\{1,\ldots,r\}$.
We can apply quadratic Chabauty for several primes $p_1,\ldots,p_s$ to $N_1,\ldots,N_s$
respective digits of precision, and set $M =m \cdot p_1^{N_1}\cdots p_s^{N_s}$, where $m$ is an auxiliary integer.
Discarding rational points and using the Chinese Remainder Theorem, we find tuples $(\tilde{a}_1,\ldots,\tilde{a}_r) \in \left(\Z/M\Z\right)^r$ with the following property: If the set $C_M$ of residue classes in $J(K)/MJ(K)$ corresponding to these tuples does not contain the image of a $K$-rational point on $X$, then the known $K$-rational points are the only ones on $X$. The Mordell-Weil sieve can be used to prove this.

Suppose now that $G \subset J(K)$ is a subgroup of finite index that is generated by the classes of
the differences of all known $K$-rational points on $X$.
Quadratic Chabauty requires the computation of $p$-adic integrals and the current
implementation requires this to take place over $\Q_p$, as opposed to an extension field. 
Since, for the combination with the Mordell-Weil sieve, we need to do this for several
primes of good ordinary reduction, we would like to work directly with the group $G$, and not with its saturation
at small primes.
This is possible using the approach introduced at the end of the previous subsection.

See \cite[$\S\S 6-8$]{BBM:Computing} for more details about fine-tuning
the Mordell-Weil sieve when used in combination with quadratic Chabauty; after some slight
modifications the statements given there remain valid in the situation considered here.

\subsection*{Computing $X_0(37)(\Q(i))$}\label{sec:x037qi}
We use the Mordell-Weil sieve, combined with the $p$-adic methods described in the main
text, to compute the set of $K$-rational points on $X_0(37)$, where $K = \Q(i)$. 
Recall from Section~\ref{ex2} that $X:y^2 =x^6 -9x^4 +11x^2 +37$
is a model for $X_0(37)$ over $K$ and that we have $r= \rk(J/K) = 2$.  
Note that
$$ \mathcal{A} := \{(\pm 2, \pm 1),(\pm i:  \pm 4), \infty^{\pm} \}\subset X(K)\,,$$
where the sign of $Y/X$ is $\pm$ for  $\infty^{\pm}$;
we want to show that we actually have equality.
We use the point $(2,1)$ as our base point for the Abel-Jacobi map $\iota:X
\hookrightarrow J$.

The subgroup $G$ of $J(K)$ generated by the differences of points in $\mathcal{A}$
can be generated by $P$, $Q$ and $R$, where
$P= [(-2, -1) - (2, -1)]$ and  $Q = [(2, 1)- (i, -4)]$ are non-torsion points, and $R
= [(-i,4)-(i,4)]$ is a generator of
$J(K)_{\mathrm{tors}}\cong \Z/3\Z$. The group $G$ is not saturated at~2; for instance, we have
\[
  16[\infty^+ - (2,1)] =P - 10Q -R\, .
\]
As discussed in the previous subsection, we nevertheless prefer to work with $G$ directly,
without first saturating at~2.

A detailed account of the computation of the sets $X(K_{\mathfrak{p}_i})_U$ for $i=1,2,3$,  where $\mathfrak{p}_i$ is a prime of $\cO_K$ lying above $p_i$ and $p_1 = 41$, $p_2 =73$ and $p_3=101$, is given in $\S\ref{ex2}$. After taking out the elements corresponding to the known rational points, we get a set of tuples $(\tilde{a}_1,\tilde{a}_2) \in (\Z/M\Z)^2$, where
$M=9\cdot 41^3\cdot 73^2\cdot 101^3$, and a corresponding set $C_M \subset G/MG$
containing ~2099520 residue classes.

To this end, we run the Mordell-Weil sieve (modified as above) with $S$ containing 
primes above $7,13,17,29,101, 109, 199, 239,313,373,677, 757$. 
We finally show that no odd prime divides both $\mathrm{lcm}\left(\{\#J(k_v)\,:\, v \in
S\}\right)$ and $(J(K):G)$; this proves that we indeed have $X(K)= \{(\pm 2: \pm 1)
,(\pm i,  \pm 4), \infty^{\pm} \}$, thus finishing the proof of
Theorem~\ref{x037pts}.


\bibliographystyle{amsplain} 
\providecommand{\bysame}{\leavevmode\hbox to3em{\hrulefill}\thinspace}
\providecommand{\MR}{\relax\ifhmode\unskip\space\fi MR }
\providecommand{\MRhref}[2]{%
  \href{http://www.ams.org/mathscinet-getitem?mr=#1}{#2}
}
\providecommand{\href}[2]{#2}

\end{document}